\newtheorem{definition}{Definition}[section]
\newtheorem{proposition}{Proposition}[section]
\newtheorem{theorem}{Theorem}[section]
\newtheorem{assumption}{Assumption}[section]
\newtheorem{lemma}[proposition]{Lemma}
\newtheorem{remark}{Remark}[section]
\newtheorem*{thank}{Acknowledgments}
\numberwithin{equation}{section}
\newcommand{\N}{\mathbb{N}}
\newcommand{\R}{\mathbb{R}}
\newcommand{\T}{\mathbb{T}}
\newcommand{\boF}{\mathcal{F}}
\newcommand{\boQ}{\mathcal{Q}}
\begin{document}
\title{Remarks on solitary waves and Cauchy problem for a Half-wave-Schr\"{o}dinger equations}

\author{\renewcommand{\thefootnote}{\arabic{footnote}} Yakine Bahri\footnotemark[1], \ Slim Ibrahim\footnotemark[2] \ and Hiroaki Kikuchi\footnotemark[3]}
\footnotetext[1]{Department of Mathematics and Statistics, University of Victoria 3800 Finnerty Road, Victoria, B.C., Canada V8P 5C2 . E-mail: {\tt ybahri@uvic.ca}}

\footnotetext[2]{Department of Mathematics and Statistics, University of Victoria 3800 Finnerty Road, Victoria, B.C., Canada V8P 5C2 . E-mail: {\tt ibrhims@uvic.ca}}

\footnotetext[3]{Department of Mathematics, Tsuda University 2-1-1 Tsuda-machi, Kodaira-shi, Tokyo 187-8577, JAPAN. E-mail: {\tt hiroaki@tsuda.ac.jp}}
\date{}
\pagestyle{plain}
\maketitle

\begin{abstract}
In this paper, we study the solitary wave and 
the Cauchy problem for Half-wave-Schr\"{o}dinger equations in the plane. First, we show the existence and orbital stability of the ground states. Secondly, we prove that traveling waves exist and converge to zero as the velocity tends to $1$. Finally, we solve the Cauchy problem for initial data in $L^{2}_{x}H^{s}_{y}(\mathbb{R}^{2})$, with $s>\frac{1}{2}$. 
\end{abstract}
\section{Introduction}
In this paper, we consider the  
the following Half-wave-Schr\"{o}dinger equation: 
\begin{equation} \label{WS}
i \partial_{t} \psi + \partial_{xx} \psi - |D_{y}| \psi 
+ |\psi|^{p-1} \psi = 0, \qquad 
\mbox{in $\mathbb{R} \times \mathbb{R}^{2}$}, 
\end{equation}
where 
$i = \sqrt{-1}, 1 < p<5$ and 
$|D_{y}| := \sqrt{- \partial_{yy}}$. 

The equation \eqref{WS} was first considered by Xu~\cite{Xu}. 
She studied the large time behavior of solutions to the cubic defocusing Half-wave-Schr\"{o}dinger equation on spatially cylinder $\mathbb{R}_{x} \times \mathbb{T}_{y}$ with small smooth initial data and obtained modified scattering results. However, it seems that 
little is known for large initial data. In this paper, we shall consider the focusing case because it provides particular solutions of type traveling and standing waves. We first study these two kinds of solitary waves to \eqref{WS}.

Let $X := H^{1}_{x} L^{2}_{y}(\mathbb{R}^{2}) 
\cap L^{2}_{x}H^{\frac{1}{2}}_{y}(\mathbb{R}^{2})$ with the norm 
\[
\|u\|_{X} := \left\{\|\partial_{x} u\|_{L^{2}}^{2} 
+ \||D_{y}|^{\frac{1}{2}} u\|_{L^{2}}^{2} + \|u\|_{L^{2}}^{2} 
\right\}^{\frac{1}{2}}. 
\] 
Concerning the stability of the standing waves, we always assume 
that 
\eqref{WS} is locally well-posed in X.
Namely, we assume the following:
\begin{assumption} \label{assum-1}
For any $\psi_{0} \in X$, there exist $T = T(\|\psi_{0}\|_{X}) >0$ and 
a unique local solution $u \in C([0, T), X)$ to \eqref{WS} 
with $\psi|_{t=0} = \psi_{0}$. 
Moreover, as long as the solution exists, the following conservation laws holds; 
\begin{equation} \label{conserv}
\mathcal{H}(\psi(t)) = \mathcal{H}(\psi_{0}), \qquad 
\mathcal{M}(\psi(t)) = \mathcal{M}(\psi_{0}) \qquad
\mbox{for all $t \in I_{\max}$},  
\end{equation}
where $I_{\max}$ is the maximal existence time and 
\begin{align*}
& \mathcal{M}(u) 
 := \frac{1}{2} \|u\|_{L^{2}(\mathbb{R}^{2})}^{2}, \\
& \mathcal{H}(u) 
 := \frac{1}{2} \iint_{\mathbb{R}^{2}}
\left(|\partial_{x} u(x, y)|^{2} + 
|D_{y}| u(x, y) \overline{u(x, y)}\right) dx 
 - \frac{1}{p+1}\iint_{\mathbb{R}^{2}} |u(x, y)|^{p+1} dxdy. 
\end{align*}  
\end{assumption}
\begin{remark}
As pointed out by Xu~\cite{Xu}, the Cauchy problem 
of Schr\"{o}dinger-half-wave equations \eqref{WS} 
is not easy because usual technics are not helpful since its Hamiltonian energy lies on the Sobolev spaces $X = H^{1}_{x}L^{2}_{y} \cap L_{x}^{2}H_{y}^{\frac{1}{2}}(\mathbb{R}^{2})$ (see Remark \ref{Rk1.6} for more details). However, we can construct a local solution in 
$L_{x}^{2}H_{y}^{\frac{1}{2} + \varepsilon}(\mathbb{R}^{2})$ for any $\varepsilon >0$ (see Section \ref{section-Cauchy-Problem} below). Thus, we may expect that the equation \eqref{WS} is locally-well posed in the energy space $X$. 
\end{remark}
The equation \eqref{WS} is scale-invariant.  Namely, if $\psi(t, x, y)$ is a solution to \eqref{WS}, 
\[
\psi_{\lambda}(t, x, y) = \lambda^{\frac{2}{p-1}}\psi(\lambda^{2}t, 
\lambda x, \lambda^{2}y)
\]
also satisfies \eqref{WS} 
for all $\lambda>0$. 
We note that 
if we put 
\[
s_{p} := \frac{3}{2} - \frac{2}{p-1}, 
\]
then we have 
\[
\|\psi_{\lambda}|_{t=0}\|_{\dot{H}_{x}^{s_{p}}L_{y}^{2}} = 
\|\psi_{0}\|_{\dot{H}^{s_{p}}L_{y}^{2}}, \qquad 
\|\psi_{\lambda}|_{t=0}\|_{L_{x}^{2}\dot{H}_{y}^{ \frac{s_{p}}{2}}  } = 
\|\psi_{0}\|_{L_{x}^{2}\dot{H}_{y}^{\frac{s_{p}}{2} } } \qquad 
\mbox{for all $\lambda > 0$.}
\]
Thus, the case $p=7/3$ is the so-called $L^{2}$-critical. 
\par
We first study the standing waves to \eqref{WS}. 
By a standing wave, we mean a solution to \eqref{WS} 
of the form $\psi(t, x, y) = e^{i \omega t}Q_{\omega}(x, y) 
\; (\omega >0)$. 
Then, $Q_{\omega}$ satisfies the following elliptic equation:
\begin{equation} \label{sp}
- \partial_{xx} Q_{\omega} + |D_{y}|Q_{\omega} + \omega Q_{\omega} - |Q_{\omega}|^{p-1}Q_{\omega} = 0 \quad \mbox{in} \  \mathbb{R}^{2}. 
\end{equation}
The equation \eqref{sp} also appears in the 
stationary problem 
to the Benjamin-Ono-Zakharov-Kuznetsov equation
(see e.g. \cite{Esfahani-Pastor1, Esfahani-Pastor2}). 
We note that if we put 
\[
\mathcal{S}_{\omega}(u) := 
\frac{1}{2}\|\partial_{x}u\|_{L^{2}}^{2} + 
\frac{1}{2}\||D_{y}|^{\frac{1}{2}}u\|_{L^{2}}^{2} 
+ \frac{\omega}{2}\|u\|_{L^{2}}^{2}
- \frac{1}{p+1}\|u\|_{L^{p+1}}^{p+1},  
\] 
then we see that $Q_{\omega} \in X$ is a solution 
to \eqref{sp} if and only if $Q_{\omega}$ is a critical point of 
the functional $\mathcal{S}_{\omega}$. 
We pay our attention to the ground states. 
The ground state is a solution to \eqref{sp} which minimizes 
the corresponding functional $\mathcal{S}_{\omega}$ for 
all non-trivial solution to \eqref{sp}. 
Concerning the existence of the ground states, 
we obtain the following theorem: 
\begin{theorem} \label{existence-groundstate}
Let $1 < p < 5$ and $\omega >0$. 
Then there exist ground states $Q_{\omega} \neq 0$ which 
minimize the following constrained minimization 
problem:   
\[
m_{\omega} = \inf \left\{
\mathcal{S}_{\omega} (u) \mid u \in X \setminus \{0\}, \; 
\mathcal{N}_{\omega} (u) = 0
\right\},  
\]
where 
\[
\mathcal{N}_{\omega}(u) 
:= \|\partial_{x}u\|_{L^{2}}^{2} + 
\||D_{y}|^{\frac{1}{2}}u\|_{L^{2}}^{2} 
+ \omega \|u\|_{L^{2}}^{2}
- \|u\|_{L^{p+1}}^{p+1}. 
\]
\end{theorem}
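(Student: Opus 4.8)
The plan is to realize the ground state as a minimizer of the Nehari-type problem defining $m_\omega$, the two analytic cornerstones being an anisotropic Gagliardo--Nirenberg inequality adapted to $X$ and a concentration-compactness argument to overcome the translation invariance of \eqref{sp}. Throughout I write $\mathcal{K}_\omega(u) := \|\partial_{x}u\|_{L^{2}}^{2} + \||D_{y}|^{\frac{1}{2}}u\|_{L^{2}}^{2} + \omega\|u\|_{L^{2}}^{2}$, which for fixed $\omega>0$ is the square of a Hilbert norm on $X$ equivalent to $\|\cdot\|_{X}^{2}$. The first observation is that on the constraint $\mathcal{N}_{\omega}(u)=0$ one has $\|u\|_{L^{p+1}}^{p+1}=\mathcal{K}_{\omega}(u)$, so that
\[
\mathcal{S}_{\omega}(u) = \left(\frac{1}{2}-\frac{1}{p+1}\right)\mathcal{K}_{\omega}(u),
\]
and since $1<p<5$ the prefactor is strictly positive. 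Thus minimizing $\mathcal{S}_{\omega}$ over the Nehari manifold amounts to minimizing the coercive quadratic form $\mathcal{K}_{\omega}$, and by the scaling $u\mapsto tu$ (for each $u\not\equiv 0$ with $\|u\|_{L^{p+1}}>0$ there is a unique $t=t(u)>0$ placing $tu$ on the manifold) the problem is equivalent, up to an explicit power, to finding the best constant in the Gagliardo--Nirenberg quotient $\mathcal{K}_{\omega}(u)/\|u\|_{L^{p+1}}^{2}$. In practice I would therefore work with the equivalent normalized problem $\nu := \inf\{\mathcal{K}_{\omega}(u):u\in X,\ \|u\|_{L^{p+1}}=1\}$.

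Next I would establish the embedding $X\hookrightarrow L^{p+1}(\mathbb{R}^{2})$ together with the quantitative bound $\|u\|_{L^{p+1}}^{p+1}\le C\,\mathcal{K}_{\omega}(u)^{(p+1)/2}$. The exponent range is dictated by scaling: under the anisotropic dilation $u(x,y)\mapsto u(\lambda x,\lambda^{2}y)$ both kinetic terms scale like $\lambda^{-1}$ while the Lebesgue norm scales like $\lambda^{-3/q}$, so the homogeneous version of $X$ is critical exactly at $L^{6}$; hence the embedding and the inequality hold precisely for $p+1<6$, i.e. $1<p<5$, which is exactly the hypothesis. Combining this inequality with the reduction above gives at once that $m_\omega>0$: on the manifold $\mathcal{K}_{\omega}(u)=\|u\|_{L^{p+1}}^{p+1}\le C\,\mathcal{K}_{\omega}(u)^{(p+1)/2}$ forces $\mathcal{K}_{\omega}(u)\ge c>0$ because $(p+1)/2>1$, and it also shows that every minimizing sequence is bounded in $X$.

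The heart of the argument, and the step I expect to be the main obstacle, is the recovery of a minimizer from a bounded minimizing sequence $\{u_{n}\}$, since the embedding $X\hookrightarrow L^{p+1}$ is not compact: the functionals are invariant under the full group of translations in $(x,y)$. Here I would apply Lions' concentration-compactness principle to the associated sequence of densities. The vanishing alternative is excluded because the normalization keeps $\|u_{n}\|_{L^{p+1}}$ bounded away from $0$. To exclude dichotomy I would prove the strict subadditivity inequality for the value $\nu$ under a splitting of the profile; this is where the superlinearity of $t\mapsto t^{p+1}$ relative to $t^{2}$ enters, and it is the genuinely delicate point. One is then left with the tight case: after suitable translations $(a_{n},b_{n})$, the sequence $u_{n}(\cdot-a_{n},\cdot-b_{n})$ converges weakly in $X$ and strongly in $L^{p+1}$ to some $u_{\ast}\neq 0$. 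Because $\mathcal{K}_{\omega}$ is a squared Hilbert norm, the decoupling $\mathcal{K}_{\omega}(u_{n})=\mathcal{K}_{\omega}(u_{\ast})+\mathcal{K}_{\omega}(u_{n}-u_{\ast})+o(1)$ follows automatically from weak convergence, so no nonlocal Brezis--Lieb lemma is needed for the quadratic part (the $L^{p+1}$ term is handled by the usual Brezis--Lieb lemma). Weak lower semicontinuity of $\mathcal{K}_{\omega}$ together with strong $L^{p+1}$ convergence then shows that $u_{\ast}$ attains the infimum and that the convergence is in fact strong in $X$.

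Finally, rescaling $u_{\ast}$ back onto the Nehari manifold produces $Q_{\omega}\neq 0$ realizing $m_{\omega}$. A Lagrange-multiplier computation gives $\mathcal{S}_{\omega}'(Q_{\omega})=\mu\,\mathcal{N}_{\omega}'(Q_{\omega})$ for some $\mu\in\mathbb{R}$; pairing with $Q_{\omega}$ and using $\langle\mathcal{S}_{\omega}'(Q_{\omega}),Q_{\omega}\rangle=\mathcal{N}_{\omega}(Q_{\omega})=0$ together with the nondegeneracy $\langle\mathcal{N}_{\omega}'(Q_{\omega}),Q_{\omega}\rangle=(1-p)\|Q_{\omega}\|_{L^{p+1}}^{p+1}\neq 0$ forces $\mu=0$, so $Q_{\omega}$ solves \eqref{sp}. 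Since every nontrivial solution of \eqref{sp} satisfies $\mathcal{N}_{\omega}=0$, the Nehari manifold contains all nonzero critical points of $\mathcal{S}_{\omega}$, and hence the minimality of $m_{\omega}$ shows that $Q_{\omega}$ is a ground state, completing the proof.
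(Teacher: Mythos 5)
Your proposal is correct in outline and rests on the same pillars as the paper's argument: the observation that on the Nehari manifold $\mathcal{S}_{\omega}$ reduces to a positive multiple of the coercive quadratic form, compactness modulo translations, Brezis--Lieb splitting, and the Lagrange-multiplier computation showing the multiplier vanishes because $\langle\mathcal{N}_{\omega}'(Q_{\omega}),Q_{\omega}\rangle=(1-p)\|Q_{\omega}\|_{L^{p+1}}^{p+1}\neq 0$. The paper obtains the theorem as the $v=0$ case of Theorem \ref{travel-thm1}, and the one genuinely different mechanism is how dichotomy is excluded: you normalize in $L^{p+1}$ and invoke strict subadditivity of the Gagliardo--Nirenberg quotient (which, as you hint, reduces to strict concavity of $t\mapsto t^{2/(p+1)}$), whereas the paper never leaves the Nehari manifold and instead uses the relaxed characterization $m_{\omega}=\inf\{\mathcal{I}_{\omega,v}(u)\mid\mathcal{N}_{\omega,v}(u)\leq 0\}$ (Lemma \ref{L2subl-thm2-1-1}) together with the Brezis--Lieb splitting of $\mathcal{N}_{\omega}$: if the weak limit had $\mathcal{N}_{\omega}>0$ the remainders would satisfy $\mathcal{N}_{\omega}<0$, could be scaled back onto the manifold, and would force $\mathcal{I}(Q_{\omega})\leq 0$, a contradiction. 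The paper's route avoids any subadditivity computation at the cost of the small relaxation lemma; yours is the more classical Lions scheme. Two points you should make explicit if you write this up: (i) ruling out vanishing requires more than $\|u_{n}\|_{L^{p+1}}\geq\delta_{0}$ --- you need the statement that a bounded sequence in $X$ with $L^{p+1}$ norm bounded below admits translations with a nonzero weak limit, which the paper proves (Lemma \ref{L2subl-thm2-2}) by tiling $\mathbb{R}^{2}$ into unit squares and applying the anisotropic inequality \eqref{GN-ineq} square by square; and (ii) the embedding $X\hookrightarrow L^{p+1}$ for $1<p<5$ should be quoted as \eqref{GN-ineq} rather than rederived from scaling heuristics. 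Neither is a gap in substance, only in write-up.
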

\begin{remark}
Since the equation \eqref{sp} is anisotropic, we cannot 
expect that the ground state to \eqref{sp} is radially symmetric 
(see Esfahani, Pastor and Bona~\cite{Esfahani-Pastor2} 
for the symmetry of the ground state)
\end{remark}
Theorem \ref{existence-groundstate} can be obtained as a corollary of 
Theorem \ref{travel-thm1} below. 
We will explain it later. 
 
Next, we are concerned with the stability of standing waves. 
The stability is defined by the following: 
\begin{definition}
\begin{enumerate}
Assume that Assumption \ref{assum-1} holds. 
\item[\rm (i)] 
Let $\Sigma \subset X$. 
We say that $\Sigma$ is {\it stable} if for any 
$\varepsilon >0$, there exists $\delta >0$ such that 
if $\psi_{0} \in X$ satisfies $\inf_{u \in \Sigma} \|\psi_{0} - u\|_{X} < \delta$, 
then the solution $\psi(t)$ with $\psi|_{t=0} = \psi_{0}$ satisfies 
\[
\sup_{t \in \mathbb{R}} \inf_{u \in \Sigma} \|\psi(t) - u\|_{X} < \varepsilon. 
\]
Otherwise, we say that $\Sigma$ is {\it unstable}. 
\item[\rm (ii)]
We say that $e^{i \omega t}Q_{\omega}$ is {\it orbitally stable} if its orbit 
\begin{equation} \label{orbit}
\mathcal{O}_{\omega} = \left\{ 
e^{i \theta} Q_{\omega}(\cdot + \tau_{1}, \cdot + \tau_{2}) \mid 
\theta \in \mathbb{R}, (\tau_{1}, \tau_{2}) \in \mathbb{R}^{2}
\right\}
\end{equation}
is stable. 
\end{enumerate}
\end{definition}
We first consider the $L^{2}$-sub-critical case $1 < p< 7/3$. 
In this case, 
we shall show that some subset, which includes the orbit of ground states, 
is stable by using the method of Cazenave and P.-L. Lions~\cite{Cazenave-Lions}. 
To state our result, we prepare several notations. 
For each $\mu > 0$, we consider the following minimization problem: 
\[
I(\mu) := \inf \left\{\mathcal{H}(u) \mid 
\mathcal{M}(u) = \mu \right\}, 
\]
and we denote by $\Sigma(\mu)$ the set of minimizers, that is, 
\[
\Sigma(\mu) := \left\{u \in X \mid \mathcal{H}(u) = I(\mu), 
\; \mathcal{M}(u) = \mu \right\}. 
\]
Then, we obtain the following stability result:  
\begin{theorem} \label{L2subl-thm2-0}
Assume that Assumption \ref{assum-1} holds. 
Let $1 < p < \frac{7}{3}$. 
For each $\mu >0$, the set $\Sigma(\mu)$ is 
non-empty and stable under the flow of \eqref{WS}. 
\end{theorem}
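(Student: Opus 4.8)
The plan is to follow the concentration-compactness approach of Cazenave and P.-L.\ Lions~\cite{Cazenave-Lions}. The argument splits into two parts: first, the relative compactness of every minimizing sequence for $I(\mu)$ (up to translations and phase), which in particular yields $\Sigma(\mu)\neq\emptyset$; second, the stability statement, deduced from this compactness together with the conservation laws in Assumption~\ref{assum-1}. The starting point is an anisotropic Gagliardo--Nirenberg inequality
\[
\|u\|_{L^{p+1}}^{p+1} \le C\left(\|\partial_{x}u\|_{L^{2}}^{2} + \||D_{y}|^{\frac{1}{2}}u\|_{L^{2}}^{2}\right)^{\frac{3(p-1)}{4}}\|u\|_{L^{2}}^{\frac{5-p}{2}},
\]
whose exponents are forced by the anisotropic scaling $u(x,y)\mapsto u(\lambda x,\lambda^{2}y)$ in which the $y$ variable carries weight $2$. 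Since $1<p<\frac{7}{3}$, the kinetic exponent $\frac{3(p-1)}{4}$ is strictly less than $1$, so by Young's inequality $\mathcal{H}$ is coercive on $\{\mathcal{M}(u)=\mu\}$: this gives $I(\mu)>-\infty$ and shows every minimizing sequence $\{u_{n}\}$ is bounded in $X$. Testing $\mathcal{H}$ on the mass-preserving rescaling $\lambda^{3/2}u(\lambda x,\lambda^{2}y)$ and letting $\lambda\to0$ shows $I(\mu)<0$, while the same scaling computation yields the homogeneity $I(\theta\mu)=\theta^{\gamma}I(\mu)$ with $\gamma=\frac{5-p}{7-3p}>1$ for $1<p<\frac{7}{3}$. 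From $I(\mu)<0$ and $\gamma>1$ the strict subadditivity
\[
I(\mu) < I(\alpha) + I(\mu-\alpha), \qquad 0<\alpha<\mu,
\]
follows since $s^{\gamma}+(1-s)^{\gamma}<1$ for $s\in(0,1)$.

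I would then apply Lions' concentration-compactness principle to the densities $\rho_{n}=|u_{n}|^{2}$. Vanishing is excluded because it would force $\|u_{n}\|_{L^{p+1}}\to0$ and hence $\liminf\mathcal{H}(u_{n})\ge0>I(\mu)$; dichotomy is excluded by the strict subadditivity above, via the usual splitting of $u_{n}$ into two pieces of masses $\alpha$ and $\mu-\alpha$ with asymptotically additive energies. The remaining compactness alternative provides, after a translation and along a subsequence, strong convergence of $u_{n}$ in $L^{p+1}$; combined with weak lower semicontinuity of the kinetic energy and the mass constraint, the limit $Q$ satisfies $\mathcal{M}(Q)=\mu$ and $\mathcal{H}(Q)=I(\mu)$, so $Q\in\Sigma(\mu)$ and the convergence upgrades to strong convergence in $X$. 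This establishes $\Sigma(\mu)\neq\emptyset$ and, crucially, the relative compactness statement: every minimizing sequence converges in $X$, up to translations and phase, to an element of $\Sigma(\mu)$.

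Stability is then a standard contradiction argument. If $\Sigma(\mu)$ were not stable, there would exist $\varepsilon_{0}>0$, data $\psi_{0,n}\in X$ with $\inf_{u\in\Sigma(\mu)}\|\psi_{0,n}-u\|_{X}\to0$, and times $t_{n}$ with $\inf_{u\in\Sigma(\mu)}\|\psi_{n}(t_{n})-u\|_{X}\ge\varepsilon_{0}$, where $\psi_{n}$ is the solution with datum $\psi_{0,n}$. By continuity of $\mathcal{M}$ and $\mathcal{H}$ on $X$ and the conservation laws, $\mathcal{M}(\psi_{n}(t_{n}))\to\mu$ and $\mathcal{H}(\psi_{n}(t_{n}))\to I(\mu)$. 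Rescaling $\psi_{n}(t_{n})$ by the factor $\sqrt{\mu/\mathcal{M}(\psi_{n}(t_{n}))}\to1$ restores the constraint and produces a minimizing sequence, which by the compactness above converges in $X$ to $\Sigma(\mu)$ (using that $\Sigma(\mu)$ is invariant under translations and phase, so the infimum distance is itself translation- and phase-invariant). Since the rescaling factor tends to $1$, this forces $\inf_{u\in\Sigma(\mu)}\|\psi_{n}(t_{n})-u\|_{X}\to0$, contradicting the lower bound $\varepsilon_{0}$.

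I expect the main obstacle to be the concentration-compactness step in the anisotropic space $X=H^{1}_{x}L^{2}_{y}\cap L^{2}_{x}H^{1/2}_{y}$: the vanishing lemma and Lions' dichotomy must be adapted to the mixed regularity (one full derivative in $x$, half a derivative in $y$), so the covering should be by anisotropic boxes scaling like $x\sim R$, $y\sim R^{2}$, and the exclusion of vanishing relies on an anisotropic version of the estimate asserting that a bounded family in $X$ whose $L^{2}$-mass spreads out must have $\|u_{n}\|_{L^{p+1}}\to0$. Establishing the Gagliardo--Nirenberg inequality and this vanishing estimate in the correct anisotropic form is where the bulk of the technical work lies; the coercivity, scaling, and final contradiction are then routine.
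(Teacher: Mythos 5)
Your proposal is correct and follows essentially the same Cazenave--Lions route as the paper: coercivity of $\mathcal{H}$ on $\{\mathcal{M}=\mu\}$ from the anisotropic Gagliardo--Nirenberg inequality, $-\infty<I(\mu)<0$ via the mass-preserving scaling, strict subadditivity of $I$, relative compactness of minimizing sequences up to translations, and the standard contradiction argument using the conservation laws for stability. The only implementation differences are minor: you derive subadditivity from the exact homogeneity $I(\theta\mu)=\theta^{\gamma}I(\mu)$ with $\gamma=\frac{5-p}{7-3p}>1$ (a clean shortcut; the paper instead computes directly with $\sqrt{\theta}\,u$), and you invoke the full Lions trichotomy where the paper reaches the same compactness more economically through a non-vanishing lemma on unit squares combined with the Brezis--Lieb splitting to rule out dichotomy.
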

\begin{remark}
\begin{enumerate}
\item[\rm (i)]
Let $Q_{\omega}$ be the ground state to \eqref{sp}. 
Then, we have 
\begin{equation} \label{scale-Q}
Q_{\omega}(x, y) = \omega^{\frac{1}{p-1}} Q_{1}(\sqrt{\omega}x, \omega y). 
\end{equation}
Thus, putting 
\[
\omega(\mu) = \left(\frac{2\mu}{\|Q_{1}\|_{L^{2}}^{2}} \right)^{-\frac{1}{s_{p}}} 
\]
for each $\mu > 0$, 
we see that $Q_{\omega(\mu)} \in \Sigma(\mu)$. 
\item[{\rm (ii)}]
From {\rm (i)}, we see that $\mathcal{O}_{\omega(\mu)} 
\subset \Sigma(\mu)$ for each 
$\mu >0$. 
Thus, in order to obtain the stability of standing wave $e^{i \omega t}Q_{\omega}$ 
from the result of Theorem \ref{L2subl-thm2-0}, 
it is enough to show the uniqueness of the ground state. 
However, it is challenging problem to show the uniqueness. 
Recently, uniqueness of the ground states for the half-wave equations 
was proved in \cite{Frank-Lenzmann1, Frank-Lenzmann2}. 
 However, at least for the authors, 
it is not clear whether we can apply their method to 
the equation \eqref{sp} because of the anisotropy.  
\end{enumerate}
\end{remark}
Next, we state our instability result.
\begin{theorem} \label{instability}
Assume that Assumption \ref{assum-1} holds. 
Let $\frac{7}{3} < p < 5$ and $Q_{\omega}$ 
be the ground state to \eqref{sp}. 
For any $\omega >0$, the standing wave $e^{i \omega t}Q_{\omega}$ 
is unstable. 
\end{theorem}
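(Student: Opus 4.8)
The plan is to exploit the $L^{2}$-preserving anisotropic scaling that is natural for \eqref{WS}, namely
\[
Q_{\omega}^{\mu}(x,y) := \mu^{\frac{3}{2}} Q_{\omega}(\mu x, \mu^{2} y), \qquad \mu > 0,
\]
which satisfies $\mathcal{M}(Q_{\omega}^{\mu}) = \mathcal{M}(Q_{\omega})$ for all $\mu$, while $\|\partial_{x}Q_{\omega}^{\mu}\|_{L^{2}}^{2} = \mu^{2}\|\partial_{x}Q_{\omega}\|_{L^{2}}^{2}$, $\||D_{y}|^{\frac{1}{2}}Q_{\omega}^{\mu}\|_{L^{2}}^{2} = \mu^{2}\||D_{y}|^{\frac{1}{2}}Q_{\omega}\|_{L^{2}}^{2}$, and $\|Q_{\omega}^{\mu}\|_{L^{p+1}}^{p+1} = \mu^{\frac{3(p-1)}{2}}\|Q_{\omega}\|_{L^{p+1}}^{p+1}$ (these follow from the homogeneity of $|D_{y}|$ and a change of variables). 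Writing $K := \|\partial_{x}Q_{\omega}\|_{L^{2}}^{2} + \||D_{y}|^{\frac{1}{2}}Q_{\omega}\|_{L^{2}}^{2}$ and $N := \|Q_{\omega}\|_{L^{p+1}}^{p+1}$, I would set $g(\mu) := \mathcal{S}_{\omega}(Q_{\omega}^{\mu}) = \tfrac12\mu^{2}K - \tfrac{1}{p+1}\mu^{\frac{3(p-1)}{2}}N + \omega\mathcal{M}(Q_{\omega})$. Since $Q_{\omega}$ is a critical point of $\mathcal{S}_{\omega}$ and $\mu\mapsto Q_{\omega}^{\mu}$ is a smooth mass-preserving curve through it, $g'(1)=0$, which is exactly the Pohozaev identity $K = \tfrac{3(p-1)}{2(p+1)}N$. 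Substituting this into the second derivative gives $g''(1) = \tfrac{3(p-1)N}{2(p+1)}\bigl(2 - \tfrac{3(p-1)}{2}\bigr)$, which is strictly negative precisely when $p>\tfrac{7}{3}$. Thus $\mu=1$ is a strict local maximum of $\mathcal{S}_{\omega}$ along this direction, and the data $Q_{\omega}^{\mu}$ with $\mu$ slightly larger than $1$ are arbitrarily close to $Q_{\omega}$ in $X$, share its mass, yet satisfy $\mathcal{S}_{\omega}(Q_{\omega}^{\mu}) < m_{\omega}$.

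Next I would introduce the virial (Pohozaev) functional
\[
\mathcal{P}(u) := \frac{d}{d\mu}\mathcal{H}(u^{\mu})\Big|_{\mu=1} = \|\partial_{x}u\|_{L^{2}}^{2} + \||D_{y}|^{\frac{1}{2}}u\|_{L^{2}}^{2} - \frac{3(p-1)}{2(p+1)}\|u\|_{L^{p+1}}^{p+1},
\]
so that $\mathcal{P}(Q_{\omega}) = g'(1) = 0$. Using the group property $(Q_{\omega}^{\mu})^{\nu} = Q_{\omega}^{\mu\nu}$ one computes $\mathcal{P}(Q_{\omega}^{\mu}) = \mu\, g_{H}'(\mu)$ with $g_{H}(\mu)=\mathcal{H}(Q_{\omega}^{\mu})$, and since $g_{H}'(1)=0$, $g_{H}''(1)<0$, this is strictly negative for $\mu>1$ near $1$. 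I would then establish the variational characterization $m_{\omega} = \inf\{\mathcal{S}_{\omega}(u) \mid u\in X\setminus\{0\},\ \mathcal{P}(u)=0\}$, minimized by the ground state (this is a standard consequence of the scaling argument, identical in spirit to the Nehari characterization behind Theorem \ref{existence-groundstate}), and show that the set $\mathcal{B} := \{u\in X \mid \mathcal{M}(u)=\mathcal{M}(Q_{\omega}),\ \mathcal{S}_{\omega}(u)<m_{\omega},\ \mathcal{P}(u)<0\}$ is invariant under the flow of \eqref{WS}, with $\mathcal{P}\le -\delta<0$ uniformly on $\mathcal{B}$ (the uniform gap comes from the coercivity estimate $\mathcal{P}(u)\le \mathcal{S}_{\omega}(u)-m_{\omega}$ valid on the constraint set, combined with continuity of the flow and conservation of $\mathcal{H},\mathcal{M}$). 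The data $Q_{\omega}^{\mu}$ with $\mu\gtrsim 1$ lie in $\mathcal{B}$.

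To convert negativity of $\mathcal{P}$ along the flow into escape from every tubular neighborhood of $\mathcal{O}_{\omega}$, I would deploy a virial-type monotonicity. The Schr\"{o}dinger direction is handled classically: with a cutoff weight, $\tfrac{d^{2}}{dt^{2}}\!\int \phi_{R}(x)|\psi|^{2}\,dxdy$ reproduces $4\|\partial_{x}\psi\|_{L^{2}}^{2}$ plus the corresponding nonlinear term, up to errors supported in $|x|\gtrsim R$. For the half-wave direction I would use the homogeneity of $|D_{y}|$ through the dilation generator $y\partial_{y}$, for which $[\,|D_{y}|,\ y\partial_{y}\,]$ closes and recovers $\||D_{y}|^{\frac{1}{2}}\psi\|_{L^{2}}^{2}$. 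Assembling the anisotropic (weighted) generator $x\partial_{x}+2y\partial_{y}$ suggested by the scaling, the resulting localized virial identity reads $\tfrac{d}{dt}\mathcal{V}_{R}(\psi(t)) \le c\,\mathcal{P}(\psi(t)) + \text{(error)}_{R} \le -c\delta + \text{(error)}_{R}$, and choosing $R$ large the error is absorbed, forcing $\mathcal{V}_{R}$ to leave any bounded range in finite time. Since a stable orbit would keep $\|\psi(t)\|_{X}$, and hence $\mathcal{V}_{R}(\psi(t))$, bounded uniformly in $t$, this contradicts the stability of $\mathcal{O}_{\omega}$, yielding the instability of $e^{i\omega t}Q_{\omega}$.

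The main obstacle, and the step requiring the most care, is the virial identity in the nonlocal $y$-variable: the operator $|D_{y}|$ is homogeneous of degree one, so its commutators with a plain quadratic weight $y^{2}$ are Hilbert-transform-type and do not produce $\||D_{y}|^{\frac{1}{2}}\psi\|_{L^{2}}^{2}$, unlike the local Schr\"{o}dinger term in $x$. I expect to resolve this by working with the dilation generator (where homogeneity makes the half-wave part close) together with a Boulenger--Lenzmann--Lewin-type truncation, and the technical heart will be showing that the commutator error terms produced by the cutoff are genuinely subcritical, i.e. controlled by $\mathcal{P}$ and made negligible by taking $R$ large; an alternative, should the localized virial prove too delicate, is to verify the Grillakis--Shatah--Strauss spectral hypotheses for the anisotropic linearized operator and invoke the abstract instability criterion, for which the sign computation $d''(\omega)=\tfrac{d}{d\omega}\mathcal{M}(Q_{\omega}) = -\tfrac{s_{p}}{2}\omega^{-s_{p}-1}\|Q_{1}\|_{L^{2}}^{2}<0$ (valid exactly for $p>\tfrac{7}{3}$, using \eqref{scale-Q}) already supplies the decisive slope condition.
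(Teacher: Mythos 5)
Your opening computation is exactly the paper's Lemma \ref{thm3-1-1}: the mass-preserving scaling (your $Q_{\omega}^{\mu}$ is the paper's $T_{\lambda}Q_{\omega}$ with $\lambda=\mu^{2}$) gives $\frac{d^{2}}{d\lambda^{2}}\mathcal{S}_{\omega}(T_{\lambda}Q_{\omega})|_{\lambda=1}<0$ precisely for $p>\frac{7}{3}$, and the derivative of that curve at $\lambda=1$ is the function $\psi_{\omega}=\frac{3}{4}Q_{\omega}+\frac{1}{2}x\partial_{x}Q_{\omega}+y\partial_{y}Q_{\omega}$. The genuine gap is in how you convert $\mathcal{P}<0$ into escape from the tube. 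Your main route, a localized virial inequality $\frac{d}{dt}\mathcal{V}_{R}\le c\,\mathcal{P}+\mathrm{error}_{R}$ built from the truncated dilation generator $x\partial_{x}+2y\partial_{y}$, faces two obstructions that you flag but do not resolve and that are serious here: (a) solutions only live in $X=H^{1}_{x}L^{2}_{y}\cap L^{2}_{x}H^{1/2}_{y}$, so $\partial_{y}\psi$ (let alone $y\partial_{y}\psi$) is not even defined along the flow; and (b) the half-wave exponent is exactly the borderline $s=1/2$ where Boulenger--Lenzmann-type localized virial estimates for a nonlocal operator degenerate (the known fractional arguments require $s>1/2$), and there is no dispersion in $y$ to absorb the commutator tails produced by the cutoff. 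Your fallback is also closed off: invoking the Grillakis--Shatah--Strauss instability criterion requires spectral information on the linearized operator, in particular non-degeneracy of the ground state, which the paper explicitly identifies as an open problem for this anisotropic equation; the slope computation $d''(\omega)<0$ alone does not suffice.

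The paper's way around both obstacles is the Shatah--Strauss/Gon\c{c}alves Ribeiro device, which is worth internalizing: instead of an unbounded virial weight one uses the \emph{fixed} function $\psi_{\omega}$ (which lies in $X$ by the regularity and decay of the ground state, Proposition \ref{regularity}) and the modulated functional $A(u)=-\langle e^{i\theta(u)}u, i\psi_{\omega}\rangle$, which is automatically bounded on the tubular neighborhood $U_{\varepsilon}(Q_{\omega})$. One then checks $\frac{d}{dt}A(\psi(t))=\mathcal{P}(\psi(t))$ with $\mathcal{P}(v)=\langle\mathcal{S}_{\omega}^{\prime}(v),-iA^{\prime}(v)\rangle$, and obtains the uniform lower bound $0<\mathcal{S}_{\omega}(Q_{\omega})-\mathcal{S}_{\omega}(v)\le C|\mathcal{P}(v)|$ by deforming $v$ onto the Nehari manifold $\{\mathcal{N}_{\omega}=0\}$ via the implicit function theorem (using $\langle\mathcal{N}_{\omega}^{\prime}(Q_{\omega}),\psi_{\omega}\rangle\ne 0$) and the characterization of $m_{\omega}$ from Theorem \ref{existence-groundstate}; this also spares you from proving your asserted, and nontrivial, identity $m_{\omega}=\inf\{\mathcal{S}_{\omega}(u)\mid\mathcal{P}(u)=0\}$ for the Pohozaev constraint. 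Boundedness of $A$ on the tube together with $|\frac{d}{dt}A|\ge\delta_{0}$ then forces exit in finite time. Without this replacement (or a fully worked-out localized virial at $s=1/2$), your argument is incomplete.
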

Grillakis, Shatah and Strauss~\cite{Grillakis-Shatah-Strauss} gave 
a sufficient condition for stability of solitary waves for abstract 
Hamiltonian systems. 
To apply the result of \cite{Grillakis-Shatah-Strauss}, 
we need to check the non-degeneracy of the ground states. 
As well as the uniqueness, 
to show the non-degeneracy is also a challenging problem. 
To avoid the difficulty, we employ the argument 
of \cite{Shatah-Strauss} and \cite{Concalves}, 
which exploits the variational characterization of the ground state 
instead of the spectral properties of the 
linearized operator.  

Finally, we shall study a existence of 
traveling wave solution. 
By a traveling wave, we mean a solution to \eqref{WS} 
of the form
\[
u(t, x, y) = e^{i \omega t} Q_{\omega, v}(x, y - vt), 
\]
where $\omega >0$ and $v \in \mathbb{R}$. 
Then, we see that $Q_{\omega, v}$ satisfies 
\begin{equation} \label{spv}
- \partial_{xx} Q + |D_{y}| Q - i v \partial_{y}Q + \omega Q 
- |Q|^{p-1}Q = 0, \qquad \mbox{in $\mathbb{R}^{2}$}. 
\end{equation}
If we put 
\begin{equation*}
\begin{split}
\mathcal{S}_{\omega, v}(u)  = \frac{1}{2} \int_{\mathbb{R}^{2}} \Big\{ & |\partial_{x} u(x, y)|^{2} + |D_{y}| u(x, y) \overline{u(x, y)}  
- i v \partial_{y} u(x, y) \overline{u(x, y)} \\
&  + \omega |u(x, y)|^{2} \Big\} dxdy  - \frac{1}{p+1} \int_{\mathbb{R}^{2}} |u(x, y)|^{p+1} dxdy,  
\end{split}
\end{equation*}
then, $\mathcal{S}_{\omega, v}$ is well-defined on $X$ and 
$u \in X$ is a solution to \eqref{spv} if and only if $u \in X$ is a
critical point of $\mathcal{S}_{\omega, v}$. 
We seek a critical point of $\mathcal{S}_{\omega, v}$ by considering 
the following minimization problem: 
\begin{equation}
\label{def:m-omega-v}
m_{\omega, v} := \inf\left\{ 
\mathcal{S}_{\omega, v}(u) \mid u \in X \setminus \{0\}, \; 
\mathcal{N}_{\omega, v}(u) = 0
\right\}, 
\end{equation}

where 
\begin{equation}
\label{def:N-omega-v}
\begin{split}
\mathcal{N}_{\omega, v}(u) 
& = \langle \mathcal{S}_{\omega, v}^{\prime}(u), v\rangle \\
& = \int_{\mathbb{R}^{2}} \left\{ 
|\partial_{x} u(x, y)|^{2} + |D_{y}| u(x, y) \overline{u(x, y)}  
- i v \partial_{y} u(x, y) \overline{u(x, y)} + \omega |u(x, y)|^{2} \right\} dxdy \\
& \quad - \int_{\mathbb{R}^{2}} |u(x, y)|^{p+1} dxdy,  
\end{split}
\end{equation}
We first show the following: 
\begin{theorem} \label{travel-thm1}
Let $\omega >0, v \in \mathbb{R}$ with $|v| < 1$ and $1 < p< 5$. 
Then, there exists a non-trivial solution $Q_{\omega, v} \in X$ 
to \eqref{spv} which satisfies 
$\mathcal{S}_{\omega, v}(Q_{\omega, v}) = m_{\omega, v}$. 
\end{theorem}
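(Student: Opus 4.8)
The plan is to realize $Q_{\omega,v}$ as a minimizer on the Nehari-type manifold $\{\mathcal{N}_{\omega,v}=0\}$ and to reduce the whole problem to a scale-invariant minimization whose only genuine difficulty is the failure of compactness of the embedding $X\hookrightarrow L^{p+1}(\mathbb{R}^{2})$. Throughout write
\[
\mathcal{K}_{\omega,v}(u):=\int_{\mathbb{R}^{2}}\Big(|\partial_{x}u|^{2}+|D_{y}|u\,\overline{u}-iv\partial_{y}u\,\overline{u}+\omega|u|^{2}\Big)\,dxdy,\qquad \mathcal{L}(u):=\|u\|_{L^{p+1}}^{p+1},
\]
so that $\mathcal{S}_{\omega,v}=\tfrac12\mathcal{K}_{\omega,v}-\tfrac1{p+1}\mathcal{L}$ and $\mathcal{N}_{\omega,v}=\mathcal{K}_{\omega,v}-\mathcal{L}$. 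The decisive consequence of the subsonic hypothesis $|v|<1$ is recorded first: passing to Fourier variables, the symbol of $\mathcal{K}_{\omega,v}$ equals $\xi^{2}+|\eta|-v\eta+\omega\ge \xi^{2}+(1-|v|)|\eta|+\omega$, so $\mathcal{K}_{\omega,v}$ is a positive-definite quadratic form equivalent to $\|\cdot\|_{X}^{2}$. This coercivity, which degenerates as $|v|\to1$, is exactly where $|v|<1$ is used.

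Next I would reduce the constrained problem \eqref{def:m-omega-v}. On $\{\mathcal{N}_{\omega,v}=0\}$ one has $\mathcal{K}_{\omega,v}=\mathcal{L}$, hence $\mathcal{S}_{\omega,v}=\tfrac{p-1}{2(p+1)}\mathcal{K}_{\omega,v}$, and every $u\neq0$ admits a unique dilation $t_{u}u$ (with $t_{u}^{\,p-1}=\mathcal{K}_{\omega,v}(u)/\mathcal{L}(u)$) lying on the manifold. Therefore
\[
m_{\omega,v}=\tfrac{p-1}{2(p+1)}J,\qquad J:=\inf\big\{\mathcal{K}_{\omega,v}(u)\mid u\in X,\ \mathcal{L}(u)=1\big\},
\]
and it suffices to produce a minimizer of this normalized problem. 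The continuous anisotropic Sobolev embedding $X\hookrightarrow L^{p+1}(\mathbb{R}^{2})$, valid for $1<p<5$ (the value $p=5$ being critical for the scaling of $X$), gives $1=\mathcal{L}(u)\le C\,\mathcal{K}_{\omega,v}(u)^{(p+1)/2}$ on the constraint, so $J>0$ and, by coercivity, every minimizing sequence is bounded in $X$.

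The heart is the compactness step. Take a minimizing sequence $u_{n}$ with $\mathcal{L}(u_{n})=1$, $\mathcal{K}_{\omega,v}(u_{n})\to J$. Vanishing is excluded: if $\sup_{z}\int_{B(z,R)}|u_{n}|^{p+1}\to0$, Lions' lemma forces $\|u_{n}\|_{L^{p+1}}\to0$, contradicting $\mathcal{L}(u_{n})=1$. Since $\mathcal{K}_{\omega,v}$ and $\mathcal{L}$ are translation invariant, after replacing $u_{n}$ by suitable translates I may assume $u_{n}\rightharpoonup u$ in $X$ with $u\neq0$, using local compactness of $X\hookrightarrow L^{p+1}_{\mathrm{loc}}$. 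Applying the Brezis--Lieb lemma to $\mathcal{L}$, together with the weak orthogonality of the quadratic form (the cross term $B_{\omega,v}(u,u_{n}-u)\to0$ as $u_{n}-u\rightharpoonup0$), yields
\[
1=\mathcal{L}(u)+\mathcal{L}(u_{n}-u)+o(1),\qquad J=\mathcal{K}_{\omega,v}(u)+\mathcal{K}_{\omega,v}(u_{n}-u)+o(1).
\]
Combining this with the scaling bound $\mathcal{K}_{\omega,v}(w)\ge J\,\mathcal{L}(w)^{2/(p+1)}$, valid for all $w$, and writing $a:=\mathcal{L}(u)\in(0,1]$, gives $J\ge J\big(a^{2/(p+1)}+(1-a)^{2/(p+1)}\big)$. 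Because $2/(p+1)<1$, strict concavity of $t\mapsto t^{2/(p+1)}$ makes the bracket strictly larger than $1$ unless $a=1$; since $J>0$ this forces $a=1$, so $\mathcal{L}(u_{n}-u)\to0$, $\mathcal{K}_{\omega,v}(u)=J$ and $\mathcal{L}(u)=1$. Thus $u$ attains $J$ and $Q_{\omega,v}:=t_{u}u$ attains $m_{\omega,v}$.

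It remains to pass from minimizer to solution of \eqref{spv}. A Lagrange multiplier yields $\mathcal{S}_{\omega,v}^{\prime}(Q_{\omega,v})=\lambda\,\mathcal{N}_{\omega,v}^{\prime}(Q_{\omega,v})$; testing against $Q_{\omega,v}$, using $\mathcal{N}_{\omega,v}(Q_{\omega,v})=0$ and $\langle\mathcal{N}_{\omega,v}^{\prime}(Q_{\omega,v}),Q_{\omega,v}\rangle=2\mathcal{K}_{\omega,v}(Q_{\omega,v})-(p+1)\mathcal{L}(Q_{\omega,v})=(1-p)\mathcal{K}_{\omega,v}(Q_{\omega,v})<0$, forces $\lambda=0$, so $Q_{\omega,v}$ is a critical point of $\mathcal{S}_{\omega,v}$ and hence solves \eqref{spv}. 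The main obstacle is the compactness step, i.e. ruling out dichotomy: the anisotropic mixed regularity of $X$ requires the Sobolev embedding and Lions' concentration lemma to be set up carefully in this space, and the entire argument rests on the coercivity of $\mathcal{K}_{\omega,v}$, which is precisely the point that breaks down at the sonic threshold $|v|=1$.
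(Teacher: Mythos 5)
Your proposal is correct in substance and reaches the same conclusion, but it excludes dichotomy by a genuinely different mechanism than the paper. The paper never leaves the Nehari manifold: it minimizes $\mathcal{I}_{\omega,v}=\mathcal{S}_{\omega,v}-\frac{1}{p+1}\mathcal{N}_{\omega,v}$ over $\{\mathcal{N}_{\omega,v}=0\}$, first proving (Lemma \ref{L2subl-thm2-1-1}) that $m_{\omega,v}$ is unchanged if the constraint is relaxed to $\mathcal{N}_{\omega,v}(u)\le 0$, and then, after extracting a nonzero weak limit $Q$ by translation, argues by Brezis--Lieb that $\mathcal{N}_{\omega,v}(Q)>0$ would force $\mathcal{N}_{\omega,v}(v_n-Q)<0$, hence a rescaled competitor and the contradiction $0<\mathcal{I}_{\omega,v}(Q)\le 0$; the relaxed characterization then closes the argument. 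You instead reduce to the homogeneous quotient $J=\inf\{\mathcal{K}_{\omega,v}(u):\mathcal{L}(u)=1\}$ and rule out mass splitting via the strict concavity of $t\mapsto t^{2/(p+1)}$ applied to the Brezis--Lieb decomposition. Both routes rest on the same three pillars (coercivity of the quadratic form for $|v|<1$, a translation/non-vanishing lemma in $X$, Brezis--Lieb), and your Lagrange-multiplier step at the end is identical to the paper's. Your version has the virtue of making the strict subadditivity completely explicit; the paper's version avoids introducing the normalized problem at the cost of the auxiliary Lemma \ref{L2subl-thm2-1-1}.

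Two minor points. First, your displayed relation $m_{\omega,v}=\frac{p-1}{2(p+1)}J$ is off: on the manifold $\mathcal{S}_{\omega,v}=\frac{p-1}{2(p+1)}\mathcal{K}_{\omega,v}$ and the projection $t_u^{p-1}=\mathcal{K}_{\omega,v}(u)/\mathcal{L}(u)$ give $m_{\omega,v}=\frac{p-1}{2(p+1)}J^{(p+1)/(p-1)}$; this does not affect the argument, since a minimizer of $J$ still rescales to a minimizer of $m_{\omega,v}$, but the constant should be corrected. Second, the step ``Lions' lemma forces $\|u_n\|_{L^{p+1}}\to 0$'' is not off-the-shelf in the anisotropic space $X=H^1_xL^2_y\cap L^2_xH^{1/2}_y$: it is exactly what the paper's Lemma \ref{L2subl-thm2-2} establishes, by tiling $\mathbb{R}^2$ with unit squares and using the anisotropic Gagliardo--Nirenberg inequality \eqref{GN-ineq}. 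You flag this yourself at the end, but in a complete write-up that lemma (or an equivalent) must be proved rather than cited as standard.
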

\begin{remark}
Note that in case $v = 0$, the equations coincides with 
\eqref{sp}. 
Therefore, 
Theorem \ref{existence-groundstate} can be obtained as a corollary 
of Theorem \ref{travel-thm1}. 
\end{remark}
Secondly, we shall study the convergence of the solution $Q_{\omega, v}$, 
which is obtained in Theorem \ref{travel-thm1}, as $|v|$ goes to $1$.  
Concerning with this, we obtain the following: 
\begin{theorem} \label{travel-thm12}
Let $\omega > 0, v \in \mathbb{R}$ with $|v| < 1$, $1 < p< 5$. 
and $Q_{\omega, v} \in X$ be the solution to \eqref{spv} 
satisfying $\mathcal{S}_{\omega, v}(Q_{\omega, v}) = m_{\omega, v}$. 
Then, we have 
\[
\lim_{|v| \to 1} \|\partial_{x}Q_{\omega, v}\|_{L^{2}} = 
\lim_{|v| \to 1} \|Q_{\omega, v}\|_{L^{2}} = 0. 
\]
\end{theorem}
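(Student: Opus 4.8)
\emph{Proof strategy.} The plan is to show that the minimal value $m_{\omega,v}$ itself tends to $0$ as $|v|\to1$, and then to read off the two limits from the structure of the Nehari constraint. Write
\[
\mathcal{T}_{\omega,v}(u):=\|\partial_x u\|_{L^2}^2+\int_{\mathbb{R}^2}\big(|\eta|+v\eta\big)\,|\widehat{u}(\xi,\eta)|^2\,d\xi\,d\eta+\omega\|u\|_{L^2}^2
\]
for the quadratic part of $\mathcal{S}_{\omega,v}$, where $\widehat u$ is the Fourier transform in $(x,y)$ with dual variables $(\xi,\eta)$, so that $\||D_y|^{\frac12}u\|_{L^2}^2=\int|\eta||\widehat u|^2$ and the drift term equals $v\int\eta|\widehat u|^2$. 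Then $\mathcal{N}_{\omega,v}(u)=\mathcal{T}_{\omega,v}(u)-\|u\|_{L^{p+1}}^{p+1}$, and on the constraint $\{\mathcal{N}_{\omega,v}=0\}$ one has $\mathcal{S}_{\omega,v}(u)=\big(\tfrac12-\tfrac{1}{p+1}\big)\mathcal{T}_{\omega,v}(u)=\frac{p-1}{2(p+1)}\mathcal{T}_{\omega,v}(u)$. In particular $m_{\omega,v}=\frac{p-1}{2(p+1)}\mathcal{T}_{\omega,v}(Q_{\omega,v})$. Since $|\eta|+v\eta\ge(1-|v|)|\eta|\ge0$ for $|v|\le1$, the middle term of $\mathcal{T}_{\omega,v}$ is nonnegative, so
\[
\|\partial_x Q_{\omega,v}\|_{L^2}^2+\omega\|Q_{\omega,v}\|_{L^2}^2\le\mathcal{T}_{\omega,v}(Q_{\omega,v})=\frac{2(p+1)}{p-1}\,m_{\omega,v}.
\]
As $\omega>0$ is fixed, it therefore suffices to prove $\displaystyle\lim_{|v|\to1}m_{\omega,v}=0$.

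Next I would pass to the scale–invariant quotient. For any $u\in X\setminus\{0\}$ the dilation $tu$ meets $\mathcal{N}_{\omega,v}(tu)=0$ at $t^{p-1}=\mathcal{T}_{\omega,v}(u)/\|u\|_{L^{p+1}}^{p+1}$, and a direct computation gives
\[
m_{\omega,v}\ \le\ \mathcal{S}_{\omega,v}(tu)\ =\ \frac{p-1}{2(p+1)}\left(\frac{\mathcal{T}_{\omega,v}(u)}{\|u\|_{L^{p+1}}^{2}}\right)^{\!\frac{p+1}{p-1}}.
\]
Thus it is enough to exhibit, for $v$ near $1$, test functions along which the Rayleigh quotient $\mathcal{T}_{\omega,v}(u)/\|u\|_{L^{p+1}}^{2}$ is arbitrarily small. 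Because this quotient is invariant under $y\mapsto-y$ combined with $v\mapsto-v$ (which leaves $\mathcal{T}$ and $\|\cdot\|_{L^{p+1}}$ unchanged), it suffices to treat $v\to1^-$; set $\varepsilon:=1-v\to0^+$.

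The construction exploits the degeneracy of the symbol: for $v=1-\varepsilon$ one has $|\eta|+v\eta=\varepsilon|\eta|$ on $\{\eta<0\}$, so mass placed at large negative $y$–frequency is almost free. Concretely I would fix a profile $\phi(x,y)=g(x)h(y)$ with $g\in\mathcal{S}(\mathbb{R})\setminus\{0\}$ and $h\in\mathcal{S}(\mathbb{R})$ chosen so that $\widehat h$ is supported in $[-R,R]$, and set
\[
u_\varepsilon(x,y):=\phi(x,\mu y)\,e^{-iNy},\qquad\mu:=\varepsilon^{-1/2},\quad N:=\varepsilon^{-1}.
\]
In Fourier variables $\widehat{u_\varepsilon}$ is concentrated where the $y$–frequency lies in $\mu[-R,R]-N$, which for small $\varepsilon$ sits entirely in $\{\eta<0\}$ since $N/\mu=\varepsilon^{-1/2}\to\infty$; there $|\eta|+v\eta=\varepsilon|\eta|$ and the drift contribution equals $\varepsilon N\|u_\varepsilon\|_{L^2}^2(1+o(1))=\|u_\varepsilon\|_{L^2}^2(1+o(1))$. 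A dilation count ($1/\mu$ from each $y$–integration, no $x$–dilation) then yields
\[
\mathcal{T}_{\omega,v}(u_\varepsilon)=\varepsilon^{1/2}\Big(\|\partial_x g\|_{L^2}^2\|h\|_{L^2}^2+(1+\omega)\|g\|_{L^2}^2\|h\|_{L^2}^2\Big)+o(\varepsilon^{1/2}),
\]
while $\|u_\varepsilon\|_{L^{p+1}}^{2}=\varepsilon^{q/2}\|\phi\|_{L^{p+1}}^{2}$ with $q:=2/(p+1)\in(0,1)$, so that $\mathcal{T}_{\omega,v}(u_\varepsilon)/\|u_\varepsilon\|_{L^{p+1}}^{2}=O(\varepsilon^{(1-q)/2})\to0$. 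Feeding this into the quotient bound gives $m_{\omega,v}\to0$, and the first paragraph then closes the argument.

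The main obstacle is precisely the choice and tuning of these test functions: one must drive the $x$–derivative term, the mass term, and the degenerate drift term simultaneously small relative to the $L^{p+1}$ norm, which forces the coupled scaling $N\sim\varepsilon^{-1}$, $\mu\sim\varepsilon^{-1/2}$ together with the band–limitation of $\phi$ in $y$, so that the entire Fourier support lands in the region $\{\eta<0\}$ on which $|\eta|+v\eta=\varepsilon|\eta|$. Everything else reduces to routine dilation bookkeeping and Plancherel.
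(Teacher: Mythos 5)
Your proposal is correct and follows essentially the same route as the paper: both arguments reduce the theorem to showing $m_{\omega,v}\to 0$ (using that $\mathcal{S}_{\omega,v}=\frac{p-1}{2(p+1)}\mathcal{T}_{\omega,v}$ on the Nehari manifold and that the degenerate middle term of $\mathcal{T}_{\omega,v}$ is nonnegative, so $\|\partial_xQ_{\omega,v}\|_{L^2}^2+\omega\|Q_{\omega,v}\|_{L^2}^2\lesssim m_{\omega,v}$), and then prove $m_{\omega,v}\to0$ with test functions whose $y$-frequency support lies on the half-line where $|\eta|\pm v\eta=(1-|v|)|\eta|$. The only cosmetic differences are that the paper uses a pure anisotropic dilation $\lambda\varphi(x,\lambda^{\alpha}y)$ of a profile with $\operatorname{supp}\widehat{\varphi}\subset[0,\infty)$ together with the relaxed characterization $m_{\omega,v}=\inf\{\mathcal{I}_{\omega,v}:\mathcal{N}_{\omega,v}\le0\}$, whereas you modulate by $e^{-iNy}$ and rescale exactly onto the Nehari manifold via the Rayleigh-quotient identity.
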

\begin{remark}
\begin{enumerate}
\item[\rm (i)]
We should remark that  
similar results of Theorem \ref{travel-thm12} 
were already observed for the cubic half-wave equation (see \cite{Bellazzini-Georgiev-Visciglia} and \cite{Krieger-Lenzmann-Raphael}). 
\item[\rm (ii)] As an extension to \cite{Krieger-Lenzmann-Raphael}, G\'{e}rard, Lenzmann, Pocovnicu and Rapha\"{e}l studied in \cite{G-L-P-R} the behavior of traveling waves as the velocity $v$ goes to $\pm1$. Using an appropriate scaling, they obtained a minimizing traveling wave of the cubic Szeg\"{o} equation which provides the uniqueness of traveling waves for the cubic Half-wave equation. 
\item[\rm (iii)]
Dodson~\cite{Dodson} considered the following focusing  
mass critical nonlinear  Schr\"{o}dinger equations:
\begin{equation} \label{mass-NLS}
i \frac{\partial \psi}{\partial t} + \Delta \psi + 
|\psi|^{\frac{4}{d}} \psi = 0 \qquad 
\mbox{in $\mathbb{R} \times \mathbb{R}^{d}$}. 
\end{equation} 
Then, it was shown in \cite{Dodson} that 
any solution to \eqref{mass-NLS} whose $L^{2}$-norm 
is below that of the ground state scatters. 
Contrary to the equation \eqref{mass-NLS}, 
from the result of Theorem \ref{travel-thm12}, 
we see that there exists a non-dispersive solution to \eqref{WS}  
whose $L^{2}$ norm is below that of the ground state. 
\end{enumerate}
\end{remark}
Finally, we will study the Cauchy problem of \eqref{WS} and show the following theorem:
\begin{theorem} \label{Cauchy}
Assume that $1 < p \leq 5$ and $s > 1/2$. 
For any $\psi_{0} \in L_{x}^{2}H_{y}^{s}(\mathbb{R}^{2})$, 
there exist $T_{\max} > 0$ and a unique local solution $u \in C((-T_{\max}, T_{\max}); L_{x}^{2}H_{y}^{s}(\mathbb{R}^{2}))$ 
to \eqref{WS} with $\psi|_{t = 0} = \psi_{0}$.  
\end{theorem}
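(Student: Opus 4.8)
The plan is to solve the Duhamel formulation
\[
\psi(t) = U(t)\psi_0 + i\int_0^t U(t-\tau)\bigl(|\psi|^{p-1}\psi\bigr)(\tau)\,d\tau,
\qquad U(t) := e^{it(\partial_{xx}-|D_y|)},
\]
by a contraction mapping argument. The propagator factors as $U(t)=e^{it\partial_{xx}}\otimes e^{-it|D_y|}$; since $e^{-it|D_y|}$ is a Fourier multiplier of modulus one in $y$ it is unitary on $H^s_y$ and commutes with the $x$-flow, so $U(t)$ is unitary on $L^2_xH^s_y$. Unitarity alone yields no gain for the nonlinearity, so the key observation is that the $x$-component is a one-dimensional Schr\"odinger flow, for which $p=5$ is precisely the $L^2_x$-critical power ($1+4/d$ with $d=1$); this explains the restriction $p\le5$. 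Accordingly I would run the fixed point in
\[
Y_T := C\bigl([-T,T];L^2_xH^s_y\bigr)\cap L^q\bigl([-T,T];L^r_xH^s_y\bigr),
\]
with $(q,r)$ a one-dimensional admissible pair, $\tfrac{2}{q}+\tfrac{1}{r}=\tfrac12$ and $2\le r\le\infty$; the pair $(q,r)=(6,6)$ is the natural choice for the critical power.

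First I would upgrade the scalar one-dimensional Strichartz estimates for $e^{it\partial_{xx}}$ to the $H^s_y$-valued estimates
\[
\|U(t)f\|_{L^q_tL^r_xH^s_y}\lesssim \|f\|_{L^2_xH^s_y},
\qquad
\Bigl\|\int_0^t U(t-\tau)F(\tau)\,d\tau\Bigr\|_{Y_T}\lesssim \|F\|_{L^{q'}_tL^{r'}_xH^s_y}.
\]
Because every admissible exponent satisfies $q,r\ge2$, these follow from the scalar estimates together with Minkowski's inequality: writing the $H^s_y$ norm via Plancherel in $\eta$ as a weighted $L^2_\eta$ norm, one pulls the $L^2_\eta$ norm outside the $L^r_x$ and $L^q_t$ norms, uses that $e^{-it|\eta|}$ has modulus one, and applies the scalar Strichartz estimate frequency by frequency in $\eta$. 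The retarded estimate then follows by the Christ--Kiselev lemma for non-endpoint pairs. This step I expect to be essentially routine.

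The heart of the matter is the nonlinear estimate. Here I would exploit that, for $s>\tfrac12$, the space $H^s_y(\R)$ embeds in $L^\infty_y$ and is a Banach algebra. Combining this with a fractional Leibniz (Kato--Ponce) and a Moser-type estimate in $y$ should yield, for fixed $(t,x)$, the pointwise-in-$x$ bound $\||\psi|^{p-1}\psi(t,x,\cdot)\|_{H^s_y}\lesssim \|\psi(t,x,\cdot)\|_{H^s_y}^{\,p}$. Taking the $L^{r'}_x$ and then the $L^{q'}_t$ norms and using H\"older in $(t,x)$, one lands in the dual Strichartz space with
\[
\bigl\||\psi|^{p-1}\psi\bigr\|_{L^{q'}_tL^{r'}_xH^s_y}\lesssim T^{\theta}\,\|\psi\|_{Y_T}^{\,p},
\]
where $\theta>0$ in the subcritical range $p<5$ and $\theta=0$ in the critical case $p=5$ (for which $(6,6)$ is dual-admissible and the powers balance exactly). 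For non-integer $p$ the fractional chain rule in $y$ is the delicate point, since $u\mapsto|u|^{p-1}u$ is then only H\"older continuous of order $p-1$ when $p<2$; I would handle this by the Christ--Weinstein form of the fractional chain rule (or a refined Moser estimate), which remains the technical crux.

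Finally I would close the contraction on a ball of $Y_T$. In the subcritical range $p<5$ the factor $T^{\theta}$ makes the map a contraction for $T$ small depending only on $\|\psi_0\|_{L^2_xH^s_y}$. In the critical case $p=5$ there is no such factor, but $\|U(\cdot)\psi_0\|_{L^6_{[-T,T]}L^6_xH^s_y}\to0$ as $T\to0$ by dominated convergence, the norm over all of $\R$ being finite by Strichartz; hence the linear term can be made small and the contraction closes on a short interval whose length depends on the profile $\psi_0$, not merely on its norm. Uniqueness, continuity of $t\mapsto\psi(t)$ into $L^2_xH^s_y$, and the blow-up alternative defining $T_{\max}$ then follow in the standard way. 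The main obstacle is therefore the nonlinear estimate: reconciling the $L^2_x$-critical structure in $x$, which removes any room for a positive power of $T$ when $p=5$, with the fractional chain rule in $y$ for non-integer nonlinearities.
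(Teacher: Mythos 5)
Your proposal is correct in outline but reaches the key Strichartz estimate by a genuinely different route. The paper runs the fixed point in $L^{4}_{T}L^{\infty}_{x,y}\cap L^{\infty}_{T}L^{2}_{x}H^{s}_{y}$ and proves $\|S(t)f\|_{L^{4}_{T}L^{\infty}_{x,y}}\lesssim\|f\|_{L^{2}_{x}H^{s}_{y}}$ by a dyadic decomposition in the $y$-frequency, Bernstein's inequality, the one-dimensional dispersive bound for $e^{it\partial_{xx}}$, the abstract Keel--Tao theorem applied to each block $S(t)\varphi(N^{-1}|D_{y}|)$ with loss $N^{1/2}$, and a Cauchy--Schwarz summation that consumes the $s>1/2$ derivatives; the nonlinearity is then treated with the fractional chain rule $\||D_{y}|^{s}(|u|^{p-1}u)\|_{L^{2}}\lesssim\|u\|_{L^{\infty}}^{p-1}\||D_{y}|^{s}u\|_{L^{2}}$ (proved via the Gagliardo seminorm identity) and H\"older in time with $\theta=(5-p)/4$. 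You instead propose $H^{s}_{y}$-valued one-dimensional Strichartz estimates in $L^{q}_{t}L^{r}_{x}H^{s}_{y}$, obtained from the scalar ones by Plancherel in $\eta$, unitarity of $e^{-it|\eta|}$, and Minkowski's inequality. This does work: the authors remark that the estimates ``do not seem to follow from 1D Strichartz because of a problem of changing $x$ and $y$ spatial norms,'' but with your ordering (the $y$-norm innermost and $L^{2}$-based in $\eta$, all outer exponents $\geq 2$) Minkowski applies, and the $s>1/2$ loss then enters only through the embedding $H^{s}_{y}\hookrightarrow L^{\infty}_{y}$ in the Moser estimate rather than through the dyadic summation; this is arguably more elementary. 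Your handling of $p=5$ (smallness of the free evolution in the Strichartz norm as $T\to 0$, so $T_{\max}$ depends on the profile of $\psi_{0}$) matches the paper's Remark 5.3. Three points need care to make your scheme airtight: for $1<p<5/3$ the fixed pair $(6,6)$ does not close the H\"older step in $x$ (you would need $\|u\|_{L^{6p/5}_{x}}$ with $6p/5<2$, which $L^{2}_{x}\cap L^{6}_{x}$ does not control on the line), so the admissible pair should be adapted to $p$, e.g.\ $r=p+1$, $q=4(p+1)/(p-1)$; the fractional chain rule for the merely H\"older-continuous nonlinearity, which you correctly flag as the crux, is exactly the paper's Lemma 5.2 and is available for all $1<p<5$ and $0<s<1$ by the seminorm argument; and since the contraction is closed in a metric weaker than the full $Y_{T}$-norm, completeness of the ball must be verified by a weak lower semicontinuity argument (the paper's Step 1).
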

\begin{remark}
\label{Rk1.6}
\begin{itemize}
\item[{\rm (i)}] Here, we will show that maximal time of existence $T_{\max} $ depends only on $\|\psi_{0}\|_{L_{x}^{2}H_{y}^{s}(\mathbb{R}^{2})}$ when $1<p<5$. 
We do not know whether this is still true for $p=5$ (see the proof of Remark \ref{Rk5.3} for more details).
\item[{\rm (ii)}] In the cubic defocusing case, Xu has shown in \cite{Xu}, as a direct consequence of her result, the global existence of solutions with small initial datum in $L^{2}_x H^{s}_y(\R \times \T )$ with $s>\frac{1}{2}.$  
\item[{\rm (iii)}] Unfortunately we are unable to solve the Cauchy problem in the energy spave $X$. The difficulty comes from the total absence of dispersion in $y$ variable. Thus, we cannot hope to loose regularity less then the Sobolev embedding in the Strichartz estimate. In addition, the argument used for the $1$D Half-wave equation (see Appendix D in \cite{Krieger-Lenzmann-Raphael} for more details) cannot be applied in this setting due to the embedding of the energy space $X$ in $L^q_{x,y}(\R^2)$ for only $q\leq6$. Also, Brezis-Gallouet type estimate is not useful due to the anisotropic property of the equation (see for example \cite{Ib-Maj-Mas}).
\end{itemize}
\end{remark}  
The proof relies on the 
contraction mapping argument using the Duhamel formula and a suitable Strichartz estimate. For that, we will obtain the Strichartz estimate using Keel-Tao Lemma in \cite{Keel-Tao} and frequency decomposition in $y$ variable.   

This paper is organized as follows. 
In Section \ref{section-stability}, we show the stability 
of the set of minimizer. 
In Section \ref{section-instability}, we give a proof of the instability 
result. 
In Section \ref{section-travelingwave}, we study the traveling waves to 
\eqref{WS}. 
In Section \ref{section-Cauchy-Problem}, we prove the 
local well-posedness for the equation \eqref{WS} with initial datum  slightly regular than the energy space in $y$ variable. In the appendix, we recall some regularity results for the ground state and useful tools for compactness.

We give the notations used in this paper: 
\begin{enumerate}
\item[\rm (i)]
For any $\Omega \subset \mathbb{R}^{2}$,  
we use $(\cdot, \cdot)_{L^{2}_{x,y}(\Omega)}$ 
to denote the inner product of $L^{2}(\Omega)$: 
\[
(f, g)_{L^{2}_{x,y}(\Omega)} := 
\int_{\Omega} f(x, y) \overline{g(x, y)} dxdy. 
\]
\item[\rm (ii)]
For any $\Omega \subset \mathbb{R}^{2}$,  
$X(\Omega)$ denotes the Hilbert space equipped with the following 
inner product: 
\[
(u, v)_{X(\Omega)} = (\partial_{x}u, \partial_{x}v)_{L^{2}_{x,y}(\Omega)} 
+ (|D_{y}|u, |D_{y}|v)_{L^{2}_{x,y}(\Omega)} 
+ (u, v)_{L^{2}_{x,y}(\Omega)}.   
\]
When $\Omega = \mathbb{R}^{2}$, 
we may write $L^{2}$ and $X$ in short 
if there is no risk of confusion. 
\item[\rm (iii)]
We use $U_{\varepsilon}(Q_{\omega})$ to denote a tubular 
neighborhood around the orbit 
$\{e^{i s}Q_{\omega} \mid s \in \mathbb{R}\}$, namely, 
\[
U_{\varepsilon}(Q_{\omega}) := \left\{
u \in X \mid \inf_{\theta \in \mathbb{R}} 
\|u - e^{i \theta}Q_{\omega}\|_{X} < \varepsilon
\right\}
\]
\item[\rm (iii)] 
For the space-time Lebesgue norms, we use the following notations:
$$L^r_tL^{q_1}_xL^{q_2}_y := L^r (\R, L^{q_1}_xL^{q_2}_y(\R^2))$$
and
$$L^r_TL^{q_1}_xL^{q_2}_y := L^r ((-T,T), L^{q_1}_xL^{q_2}_y(\R^2)),$$
with $T>0$ and $r,$ $q_1$ and $q_2$ are in $[1, \infty].$ 
\end{enumerate}
\section{Stability result}\label{section-stability}
In this section, we shall show Theorem \ref{L2subl-thm2-0} 
following Cazenave and P.-L. Lions~\cite{Cazenave-Lions}. 
To this end, we prepare several lemmas. 
We first recall the following 
anisotropic Gagliardo-Nirenberg inequality: 
\begin{lemma}[Anisotropic Gagliardo-Nirenberg inequality]
Assume that $1 < p< 5$. 
There exists a constant $C_{GN} >0$ such that 
\begin{equation} \label{GN-ineq}
\|u\|_{L^{p+1}}^{p+1} \leq C_{GN} 
\|\partial_{x} u\|_{L^{2}}^{\frac{p-1}{2}} 
\||D_{y}|^{\frac{1}{2}} u\|_{L^{2}}^{p-1}
\|u\|_{L^{2}}^{\frac{5-p}{2}}
\end{equation}
for all $u \in X$. 
\end{lemma}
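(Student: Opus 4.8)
The plan is to reduce the two–dimensional anisotropic inequality to a single critical Sobolev embedding for the operator $\Lambda := -\partial_{xx} + |D_y|$, and then to recover the whole subcritical family by a scaling optimisation followed by soft interpolation. First I would prove the scale-covariant endpoint embedding
\[
\|u\|_{L^6(\mathbb{R}^2)} \le C \left( \|\partial_x u\|_{L^2}^2 + \||D_y|^{\frac12}u\|_{L^2}^2 \right)^{\frac12} = C\,\|\Lambda^{\frac12}u\|_{L^2}.
\]
The symbol of $\Lambda$ is $\xi^2+|\eta|$, homogeneous of degree $2$ under the parabolic scaling $(\xi,\eta)\mapsto(\lambda\xi,\lambda^2\eta)$; the associated homogeneous dimension is $d_\ast = 1+2 = 3$, so the critical exponent is $2d_\ast/(d_\ast-2)=6$. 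Concretely I would write $u = \Lambda^{-1/2}\bigl(\Lambda^{1/2}u\bigr)$ and observe that $\Lambda^{-1/2}$ is convolution against a kernel $K$ whose multiplier $(\xi^2+|\eta|)^{-1/2}$ is parabolically homogeneous of degree $-1$; hence $K$ is parabolically homogeneous of degree $-2$ and lies in the weak space $L^{3/2,\infty}(\mathbb{R}^2)$. The Hardy–Littlewood–Sobolev inequality $L^2 \ast L^{3/2,\infty}\to L^6$ then gives the display. (The same bound also follows from the decay $\|e^{-t\Lambda}\|_{L^1\to L^\infty}\lesssim t^{-3/2}$ of the semigroup via the standard heat-kernel characterisation of Sobolev embeddings, or from a Littlewood–Paley decomposition in the $y$-frequency using Bernstein in $y$ and the one–dimensional Sobolev embedding in $x$ on each dyadic block.)

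Next I would optimise the scaling. Applying the endpoint bound to $u_\mu(x,y):=u(x,\mu y)$ and using $\|u_\mu\|_{L^6}=\mu^{-1/6}\|u\|_{L^6}$, $\|\partial_x u_\mu\|_{L^2}^2=\mu^{-1}\|\partial_x u\|_{L^2}^2$, and $\||D_y|^{\frac12}u_\mu\|_{L^2}=\||D_y|^{\frac12}u\|_{L^2}$ (the last because $|D_y|^{\frac12}$ is scale invariant in one dimension), I obtain $\|u\|_{L^6}\le C\bigl(\mu^{-2/3}\|\partial_x u\|_{L^2}^2+\mu^{1/3}\||D_y|^{\frac12}u\|_{L^2}^2\bigr)^{1/2}$; minimising over $\mu>0$ yields the fully anisotropic critical inequality $\|u\|_{L^6} \le C\,\|\partial_x u\|_{L^2}^{1/3}\,\||D_y|^{\frac12}u\|_{L^2}^{2/3}$. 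Finally, for $1<p<5$ I would interpolate $\|u\|_{L^{p+1}}\le\|u\|_{L^2}^{1-\gamma}\|u\|_{L^6}^{\gamma}$ with $\gamma=\frac{3(p-1)}{2(p+1)}$ fixed by $\tfrac1{p+1}=\tfrac{1-\gamma}2+\tfrac{\gamma}6$, and substitute the critical bound for $\|u\|_{L^6}^\gamma$. Raising to the power $p+1$, a direct check shows the exponents collapse exactly to $\frac{p-1}2$ on $\|\partial_x u\|_{L^2}$, to $p-1$ on $\||D_y|^{\frac12}u\|_{L^2}$, and to $\frac{5-p}2$ on $\|u\|_{L^2}$, which is \eqref{GN-ineq}. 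All manipulations would first be done for Schwartz functions and then extended to $u\in X$ by density.

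The main obstacle is the endpoint embedding of the first step, where the $x$-direction carries a full derivative while the $y$-direction carries only a half derivative, so the two must be used jointly rather than separately. A naive fibrewise application of the one–dimensional Gagliardo–Nirenberg inequality in $y$ is tempting but fails for $p\ge 3$: it only bounds $\|u\|_{L^{p+1}}^{p+1}$ by $\int \||D_y|^{\frac12}u(x,\cdot)\|_{L^2_y}^{p-1}\|u(x,\cdot)\|_{L^2_y}^2\,dx$, and since $|D_y|^{\frac12}u$ is controlled only in $L^2_{x,y}$ this quantity blows up for profiles concentrated in $x$ when $p-1>2$. Routing through the critical $L^6$ embedding is precisely what couples the two derivatives correctly; once it is in hand, the remaining subcritical cases are obtained by the elementary interpolation and scaling steps above.
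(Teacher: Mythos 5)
The paper does not actually prove this lemma: it is recalled as a known result and attributed to Esfahani--Pastor \cite{Esfahani-Pastor1}, where the inequality (with sharp constant) is established; so there is no in-paper proof to compare against line by line. Your argument is a correct, self-contained derivation, and the skeleton --- critical anisotropic embedding $\|u\|_{L^{6}}\lesssim \|\Lambda^{1/2}u\|_{L^{2}}$ for $\Lambda=-\partial_{xx}+|D_y|$, anisotropic rescaling $u(x,\mu y)$ to split the right-hand side into the product $\|\partial_x u\|_{L^2}^{1/3}\||D_y|^{1/2}u\|_{L^2}^{2/3}$, then H\"older interpolation between $L^2$ and $L^6$ with $\gamma=\frac{3(p-1)}{2(p+1)}$ --- is sound; I checked that all the exponents collapse to $\frac{p-1}{2}$, $p-1$, $\frac{5-p}{2}$ as claimed, and that $\gamma\in(0,1]$ exactly for $1<p\le 5$. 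Your diagnosis of why the naive fibrewise one-dimensional argument fails for $p\ge 3$ is also correct and is precisely the reason the coupled $L^6$ endpoint is needed.

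The only step that is asserted rather than proved is the endpoint embedding itself, and within it the claim that the kernel $K$ of $\Lambda^{-1/2}$ is a genuine function lying in $L^{3/2,\infty}(\mathbb{R}^2)$. This should be justified, but it is easy with the tools you already name: since $e^{-t\Lambda}=e^{t\partial_{xx}}e^{-t|D_y|}$ has the explicit nonnegative product kernel (Gauss in $x$ times Poisson in $y$), the subordination formula
\begin{equation*}
K(x,y)=c\int_{0}^{\infty}t^{-1/2}\,\frac{e^{-x^{2}/4t}}{\sqrt{4\pi t}}\cdot\frac{1}{\pi}\frac{t}{t^{2}+y^{2}}\,dt
\end{equation*}
gives $0\le K(x,y)\lesssim \left(|x|+|y|^{1/2}\right)^{-2}$, whence the weak-$L^{3/2}$ bound and the $L^{2}\ast L^{3/2,\infty}\to L^{6}$ conclusion; alternatively the ultracontractivity bound $\|e^{-t\Lambda}\|_{L^{1}\to L^{\infty}}\lesssim t^{-3/2}$ you quote yields the same embedding directly. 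With that detail filled in, the proof is complete, and it is arguably more transparent than a bare citation since it exhibits the parabolic homogeneous dimension $3$ responsible for the exponent $6$ and for the restriction $p\le 5$.
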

We note that the best constant is attained by the ground sate 
to \eqref{sp} (see \cite{Esfahani-Pastor1} in detail). 
\begin{lemma} \label{L2subl-thm2-1}
For each $\mu >0$. we have $-\infty < I(\mu) < 0$. 
\end{lemma}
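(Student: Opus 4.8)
The plan is to prove the two bounds separately, both by exploiting the anisotropic Gagliardo--Nirenberg inequality \eqref{GN-ineq} together with the fact that on the constraint set the mass $\|u\|_{L^2}^2 = 2\mu$ is frozen. Throughout I work in the subcritical regime $1 < p < \tfrac{7}{3}$, which is where the lemma lives. I would first record that, by self-adjointness of $|D_y|$,
\[
\mathcal{H}(u) = \tfrac{1}{2}\|\partial_x u\|_{L^2}^2 + \tfrac{1}{2}\||D_y|^{\frac{1}{2}}u\|_{L^2}^2 - \tfrac{1}{p+1}\|u\|_{L^{p+1}}^{p+1},
\]
so that the positive part of $\mathcal{H}$ consists exactly of the two quadratic seminorms of $X$.

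For the lower bound $I(\mu) > -\infty$, I would substitute $\|u\|_{L^2}^2 = 2\mu$ into \eqref{GN-ineq} to get
\[
\|u\|_{L^{p+1}}^{p+1} \le C_{GN}(2\mu)^{\frac{5-p}{4}} \|\partial_x u\|_{L^2}^{\frac{p-1}{2}}\||D_y|^{\frac{1}{2}}u\|_{L^2}^{p-1}.
\]
Writing $a := \|\partial_x u\|_{L^2}^2$ and $b := \||D_y|^{\frac{1}{2}}u\|_{L^2}^2$, the right-hand side is a constant multiple of $a^{(p-1)/4}b^{(p-1)/2} \le (a+b)^{3(p-1)/4}$. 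The subcriticality enters decisively here: the total exponent $\tfrac{3(p-1)}{4}$ is strictly less than $1$ precisely because $p < \tfrac{7}{3}$, so Young's inequality yields $(a+b)^{3(p-1)/4} \le \varepsilon(a+b) + C_\varepsilon$ for every $\varepsilon > 0$. Taking $\varepsilon$ small enough to absorb the nonlinear term into half of the quadratic part gives $\mathcal{H}(u) \ge \tfrac{1}{4}(a+b) - C \ge -C$, uniformly over the constraint set, whence $I(\mu) \ge -C > -\infty$.

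For the strict negativity $I(\mu) < 0$ I would produce an explicit competitor by an anisotropic, mass-preserving rescaling. Fix any $u_0 \in X \setminus \{0\}$ normalized so that $\mathcal{M}(u_0) = \mu$, and set $u_\lambda(x,y) := \lambda^{3/2} u_0(\lambda x, \lambda^2 y)$ for $\lambda > 0$; one checks directly that $\mathcal{M}(u_\lambda) = \mathcal{M}(u_0) = \mu$ for all $\lambda$. Computing the scaling of each term (the nonlocal seminorm scaling as $\||D_y|^{\frac{1}{2}}u_\lambda\|_{L^2}^2 = \lambda^2 \||D_y|^{\frac{1}{2}}u_0\|_{L^2}^2$) gives
\[
\mathcal{H}(u_\lambda) = \lambda^2\left(\tfrac{1}{2}\|\partial_x u_0\|_{L^2}^2 + \tfrac{1}{2}\||D_y|^{\frac{1}{2}}u_0\|_{L^2}^2\right) - \tfrac{1}{p+1}\lambda^{\frac{3(p-1)}{2}}\|u_0\|_{L^{p+1}}^{p+1}.
\]
Since $p < \tfrac{7}{3}$ forces $\tfrac{3(p-1)}{2} < 2$, factoring out $\lambda^{3(p-1)/2}$ shows the quadratic contribution carries the positive power $\lambda^{2 - 3(p-1)/2} \to 0$ as $\lambda \to 0^+$, so $\mathcal{H}(u_\lambda) < 0$ for all sufficiently small $\lambda$; as $\mathcal{M}(u_\lambda) = \mu$ this forces $I(\mu) \le \mathcal{H}(u_\lambda) < 0$. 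The main obstacle is choosing the correct anisotropic exponents in $u_\lambda$ and verifying the scaling of the nonlocal term $\||D_y|^{\frac{1}{2}}\cdot\|_{L^2}$, which behaves differently from an ordinary derivative; once these are in hand, the same condition $p < \tfrac{7}{3}$ simultaneously powers the Young absorption in the lower bound and the $\lambda \to 0$ collapse in the upper bound.
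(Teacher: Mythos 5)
Your proof is correct and follows essentially the same route as the paper: the same anisotropic Gagliardo--Nirenberg estimate plus Young absorption for the lower bound, and the same mass-preserving anisotropic rescaling for strict negativity (your $u_\lambda$ is exactly the paper's $T_\lambda$ with $\lambda$ replaced by $\lambda^{2}$). As a side remark, your exponent $\tfrac{3(p-1)}{4}$ in the Young step is the correct one --- the paper's displayed exponent $\tfrac{3(p-1)}{2}$ at that point is a typo, since only $\tfrac{3(p-1)}{4}<1$ is equivalent to $p<\tfrac{7}{3}$.
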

\begin{proof}
First, we shall show that $I(\mu) < 0$. 
Let $u \in X$ with $\mathcal{M}(u) = \mu$. 
We consider the following $L^{2}$-scaling:
\begin{equation} \label{L2scale}
T_{\lambda}(\cdot, \cdot) = \lambda^{\frac{3}{4}} u(\lambda^{\frac{1}{2}} \cdot, 
\lambda \cdot)\qquad (\lambda >0). 
\end{equation}
Then, we see that $\|T_{\lambda} u\|_{L^{2}} = \|u\|_{L^{2}}$ and 
\[
\mathcal{H}(T_{\lambda} u) 
= \frac{\lambda}{2} \left(\|\partial_{x} u\|_{L^{2}}^{2} 
+ \||D_{y}|^{\frac{1}{2}} u\|_{L^{2}}^{2} \right)  
- \frac{\lambda^{\frac{3}{4}(p-1)}}{p+1} \|u\|_{L^{p+1}}^{p+1}. 
\]
Note that $\frac{3}{4}(p-1) < 1$ for $1 < p< \frac{7}{3}$. 
Therefore, we can take $\lambda >0$ sufficiently small so that 
$\mathcal{H}(T_{\lambda} u) < 0$. 
This yields that $I(\mu) < 0$. 

Next, we prove that $I(\mu) > -\infty$. 
By the Gagliardo-Nirenberg inequality \eqref{GN-ineq} 
and $\mathcal{M}(u) = \mu$, 
we have 
\begin{equation*}
\begin{split}
\mathcal{H}(u)
& = \frac{1}{2} \left(\|\partial_{x} u\|_{L^{2}}^{2} 
+ \||D_{y}|^{\frac{1}{2}} u\|_{L^{2}}^{2} \right) \\
& \quad - 
\frac{C_{GN}}{p+1} 
\||D_{y}|^{\frac{1}{2}} u\|_{L^{2}}^{p-1} 
\|\partial_{x} u\|_{L^{2}}^{\frac{p-1}{2}} 
\|u\|_{L^{2}}^{\frac{5-p}{2}}\\
& \geq \frac{1}{2} 
\left(\|\partial_{x} u\|_{L^{2}}^{2} 
+ \||D_{y}|^{\frac{1}{2}} u\|_{L^{2}}^{2} \right)
 - 
\frac{C_{GN}}{p+1} \mathcal{M}(u)^{\frac{5-p}{4}} \left(\|\partial_{x} u\|_{L^{2}}^{2} 
+ \||D_{y}|^{\frac{1}{2}} u\|_{L^{2}}^{2} \right)^{\frac{3(p-1)}{2}}. 
\end{split}
\end{equation*}
We remark that $\frac{3(p-1)}{2} < 1$ for $1 < p< \frac{7}{3}$. 
Thus, by the Young inequality, we have 
\begin{equation} \label{eq2-1}
\mathcal{H}(u) \geq \frac{1}{4} 
\left(\|\partial_{x} u\|_{L^{2}}^{2} 
+ \||D_{y}|^{\frac{1}{2}} u\|_{L^{2}}^{2} \right) - C 
\geq - C, 
\end{equation}
where $C>0$ is some constant, which is independent on $u \in X$. 
This implies that $I(\mu) > -\infty$. 
This completes the proof. 
\end{proof}
\begin{lemma} \label{L2subl-thm2-2}
Let $\{u_{n}\} \subset X$ satisfy
 $\sup_{n \in \mathbb{N}} \|u_{n}\|_{X} < \infty$ and 
$\inf_{n \in \mathbb{N}}\|u_{n}\|_{L^{p+1}} > \delta_{0}$ for some 
$\delta_{0} >0$. 
Then, there exist a sub-sequence of $\{u_{n}\}$ 
(we still denote it by the same letter), a sequence $\{(x_{n}, y_{n})\} 
\subset \mathbb{R}^{2}$ and $u_{\infty} \in X \setminus \{0\}$ such that 
$$u_{n}(\cdot + x_{n}, \cdot + y_{n}) \rightharpoonup u_{\infty} \quad {\rm in} \  X \quad {\rm as} \ n \to \infty .$$ 
\end{lemma}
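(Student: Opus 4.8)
The plan is to prove this as an anisotropic analogue of P.-L.~Lions' non-vanishing lemma from the concentration-compactness method. The hypothesis $\inf_{n}\|u_{n}\|_{L^{p+1}} > \delta_{0}$ says that the sequence does not vanish, and the idea is to convert this into a quantitative lower bound on the $L^{2}$-mass carried by $u_{n}$ inside a box of fixed size centred at a moving point $(x_{n}, y_{n})$. Translating by $(x_{n}, y_{n})$ then pins this mass near the origin, and a local compact embedding forces the weak limit to be non-trivial.

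First I would establish (or invoke from the compactness tools collected in the appendix) an anisotropic vanishing lemma: if $\{w_{n}\}$ is bounded in $X$ and
\[
\sup_{(a,b)\in\R^{2}}\int_{Q_{R}(a,b)}|w_{n}|^{2}\,dx\,dy\longrightarrow 0
\]
for some $R>0$, where $Q_{R}(a,b)=\{(x,y): |x-a|<R,\ |y-b|<R\}$, then $w_{n}\to 0$ in $L^{p+1}(\R^{2})$. To prove this I would cover $\R^{2}$ by a locally finite family of translated boxes, apply the anisotropic Gagliardo--Nirenberg inequality \eqref{GN-ineq} localized to each box, and sum the resulting estimates, using the small uniform local $L^{2}$-mass as a decaying prefactor while the gradient-type factors are controlled by the global $X$-bound through H\"{o}lder's inequality.

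Taking the contrapositive and using $\liminf_{n}\|u_{n}\|_{L^{p+1}}\ge\delta_{0}>0$, I obtain, after passing to a subsequence, constants $R,\eta>0$ and centres $(x_{n},y_{n})\in\R^{2}$ such that
\[
\int_{Q_{R}(x_{n},y_{n})}|u_{n}|^{2}\,dx\,dy\ge\eta.
\]
Setting $v_{n}:=u_{n}(\cdot+x_{n},\cdot+y_{n})$ and using the translation invariance of the $X$-norm, the sequence $\{v_{n}\}$ is bounded in $X$; since $X$ is a Hilbert space, it is reflexive, so I may extract a subsequence with $v_{n}\rightharpoonup u_{\infty}$ weakly in $X$. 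By construction the mass bound becomes $\int_{Q_{R}(0,0)}|v_{n}|^{2}\,dx\,dy\ge\eta$.

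It remains to show $u_{\infty}\neq 0$, and this is the main obstacle. It suffices to prove that $v_{n}\to u_{\infty}$ strongly in $L^{2}(Q_{R}(0,0))$, for then passing to the limit in the mass bound gives $\int_{Q_{R}(0,0)}|u_{\infty}|^{2}\,dx\,dy\ge\eta>0$. Strong local $L^{2}$-convergence is exactly the statement that the embedding $X(Q_{R})\hookrightarrow L^{2}(Q_{R})$ is compact, and the difficulty is genuinely the anisotropy: one controls a full derivative in $x$ but only half a derivative in $y$, so the usual Rellich--Kondrachov argument must be adapted. I would prove the compactness via the Riesz--Fr\'{e}chet--Kolmogorov criterion, after localizing with a smooth cutoff, using the translation estimates
\[
\|v_{n}(\cdot+h,\cdot)-v_{n}\|_{L^{2}}\lesssim |h|\,\|\partial_{x}v_{n}\|_{L^{2}},\qquad
\|v_{n}(\cdot,\cdot+k)-v_{n}\|_{L^{2}}\lesssim |k|^{\frac12}\,\||D_{y}|^{\frac12}v_{n}\|_{L^{2}},
\]
both of which tend to $0$ uniformly in $n$ as $(h,k)\to 0$ by the uniform $X$-bound; together with the tightness provided by the cutoff this yields a subsequence converging strongly in $L^{2}$. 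Consequently $u_{\infty}\neq 0$, which completes the argument.
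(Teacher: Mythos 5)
Your proposal is correct and follows essentially the same route as the paper: a localized anisotropic Gagliardo--Nirenberg inequality summed over a unit-box tiling of $\mathbb{R}^{2}$ converts the non-vanishing of $\|u_{n}\|_{L^{p+1}}$ into a uniform lower bound on the $L^{2}$-mass in some box $\boQ_{k_{n},m_{n}}$, after which translation, weak compactness of the bounded sequence in $X$, and local compactness of the embedding $X(\boQ_{0,0})\hookrightarrow L^{2}(\boQ_{0,0})$ force the weak limit to be non-trivial (the paper pins the exponent by first passing to $L^{10/3}$ via H\"{o}lder, which is the clean choice for the box summation). The only difference is cosmetic: where you propose to prove the local compact embedding by Riesz--Fr\'{e}chet--Kolmogorov with the anisotropic translation estimates, the paper simply cites a fractional Rellich--Kondrachov theorem.
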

\begin{proof}
By the H\"{o}lder inequality, we have 
\[
\delta_{0} < \|u_{n}\|_{L^{p+1}} \leq 
\|u_{n}\|_{L^{2}}^{\theta} \|u_{n}\|_{L^{\frac{10}{3}}}^{1-\theta} 
\lesssim \|u_{n}\|_{L^{\frac{10}{3}}}^{1-\theta},  
\]
where $\theta = \frac{7-3p}{2(p+1)}$. 
Therefore, there exists $\delta_{1} >0$, which is independent on 
$n \in \mathbb{N}$, such that 
$\|u_{n}\|_{L^{\frac{10}{3}}} \geq \delta_{1}$ for all $n \in \mathbb{N}$. 
We set 
\[
\boQ_{k, m} := (k, k +1) \times (m, m + 1)
\]
for all $(k, m) \in \mathbb{Z}^{2}$. 
Then, by the Gagliardo-Nirenberg inequality \eqref{GN-ineq}, we have 
\begin{equation*}
\begin{split}
\|u_{n}\|_{L^{\frac{10}{3}}(\boQ_{k, m})}^{\frac{10}{3}} 
& \leq C_{GN} \|u_{n}\|_{L^{2}(\boQ_{k, m})}^{\frac{4}{3}} 
\||D_{y}|^{\frac{1}{2}} u_{n}\|_{L^{2}(\boQ_{k, m})}^{\frac{4}{3}} 
\|\partial_{x} u_{n}\|_{L^{2}(\boQ_{k, m})}^{\frac{2}{3}} \\
& \leq C \|u_{n}\|_{L^{2}(\boQ_{k, m})}^{\frac{4}{3}} 
\left(\|\partial_{x} u\|_{L^{2}(\boQ_{k, m})}^{2} 
+ \||D_{y}|^{\frac{1}{2}} u\|_{L^{2}(\boQ_{k, m})}^{2} \right). 
\end{split}
\end{equation*}
Taking a sum on $(m, n) \in \mathbb{Z}^{2}$, we have 
\begin{equation*}
\begin{split}
\delta_{1} < 
\|u_{n}\|_{L^{\frac{10}{3}}(\mathbb{R}^{2})}^{\frac{10}{3}} 
& \leq C \sum_{(m, n) \in \mathbb{Z}^{2}} 
\|u_{n}\|_{L^{2}(\boQ_{k, m})}^{\frac{4}{3}} 
\left(\|\partial_{x} u\|_{L^{2}(\boQ_{k, m})}^{2} 
+ \||D_{y}|^{\frac{1}{2}} u\|_{L^{2}(\boQ_{k, m})}^{2} \right) \\
& \leq C \sup_{(k, m) \in \mathbb{Z}^{2}} 
\|u_{n}\|_{L^{2}(\boQ_{k, m})}^{\frac{4}{3}} 
\left(\|\partial_{x} u\|_{L^{2}(\mathbb{R}^{2})}^{2} 
+ \||D_{y}|^{\frac{1}{2}} u\|_{L^{2}(\mathbb{R}^{2})}^{2} \right)\\
& \leq C \sup_{(k, m) \in \mathbb{Z}^{2}} 
\|u_{n}\|_{L^{2}(\boQ_{k, m})}^{\frac{4}{3}}. 
\end{split}
\end{equation*}
Thus, there exist $(k_{n}, m_{n}) \in \mathbb{Z}^{2}$ 
and $\delta_{2} > 0$ such that 
$\delta_{2} \leq \|u_{n}\|_{L^{2}(\boQ_{k_{n}, m_{n}})}$. 
If we put $\widetilde{u}_{n}(\cdot, \cdot) = 
u_{n}(\cdot + k_{n}, \cdot + m_{n})$, we see that 
$\lim_{n \to \infty} \|\widetilde{u}_{n}\|_{X} < \infty$ 
and $\|\widetilde{u}_{n}\|_{L^{2}(\boQ_{0, 0})} \geq \delta_{2}$. 
Therefore,  
there exist 
a sub-sequence of $\{u_{n}\}$ (we still denote it by the same letter) and 
$u_{\infty} \in X$ satisfying 
$\widetilde{u}_{n} \rightharpoonup u_{\infty}$ in $X$ as $n \to \infty$. 
Moreover, by the Rellich theorem (see e.g. \cite{Park}), 
we have $\|u_{\infty}\|_{L^{2}(\boQ_{0, 0})} \geq \delta_{2}$. 
Therefore, we see that $u_{\infty} \neq 0$. 
\end{proof}
\begin{lemma} \label{L2subl-thm2-3}
Let $\theta > 1$ and $\mu > 0$. 
Then, we have $I(\theta \mu) < \theta I(\mu)$. 
\end{lemma}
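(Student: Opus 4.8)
The plan is to promote the strict subadditivity to an exact scaling identity $I(\mu)=\mu^{\sigma}I(1)$ with a homogeneity exponent $\sigma>1$, and then read the inequality off from the sign information $I(1)<0$ already supplied by Lemma~\ref{L2subl-thm2-1}. The point is that all three quantities $\mathcal{M}$, the quadratic part of $\mathcal{H}$, and the nonlinear part of $\mathcal{H}$ are homogeneous under a single anisotropic dilation, so the whole problem scales cleanly.

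Concretely, for $s>0$ I would introduce the dilation $(D_{s}u)(x,y)=s^{a}u(s^{b}x,s^{c}y)$ and compute, exactly as in the proof of Lemma~\ref{L2subl-thm2-1}, how each term transforms: $\|D_s u\|_{L^{2}}^{2}=s^{2a-b-c}\|u\|_{L^{2}}^{2}$, $\|\partial_{x}D_s u\|_{L^{2}}^{2}=s^{2a+b-c}\|\partial_{x}u\|_{L^{2}}^{2}$, $\||D_{y}|^{\frac12}D_s u\|_{L^{2}}^{2}=s^{2a-b}\||D_{y}|^{\frac12}u\|_{L^{2}}^{2}$, and $\|D_s u\|_{L^{p+1}}^{p+1}=s^{(p+1)a-b-c}\|u\|_{L^{p+1}}^{p+1}$. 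I then pin down $a,b,c$ by three demands: that the mass scale linearly, $2a-b-c=1$; that the two kinetic terms scale with the \emph{same} power of $s$, which forces $c=2b$ (i.e. the $y$-variable must be rescaled on the parabolic scale of $x$ — this is exactly where the anisotropy of the equation is felt); and that the nonlinear term scale with that same power so that $\mathcal{H}$ is homogeneous, which forces $b=\tfrac{p-1}{2}a$. Solving gives
\[
a=\frac{2}{7-3p},\qquad b=\frac{p-1}{7-3p},\qquad c=\frac{2(p-1)}{7-3p},
\]
and hence $\mathcal{M}(D_s u)=s\,\mathcal{M}(u)$ together with $\mathcal{H}(D_s u)=s^{\sigma}\mathcal{H}(u)$, where $\sigma=\dfrac{5-p}{7-3p}$. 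Since $1<p<\tfrac{7}{3}$ one has $7-3p>0$, and the equivalence $\sigma>1\iff 5-p>7-3p\iff p>1$ shows $\sigma>1$ throughout this range.

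Next, observing that $D_{s}$ is invertible with $D_{s}^{-1}=D_{1/s}$ and that $s=\mu$ maps the constraint set $\{\,\mathcal{M}=1\,\}$ bijectively onto $\{\,\mathcal{M}=\mu\,\}$, I would take the infimum of both sides of $\mathcal{H}(D_{\mu}w)=\mu^{\sigma}\mathcal{H}(w)$ over $w$ with $\mathcal{M}(w)=1$. The two easy inequalities (use $D_{\mu}$ to go from mass $1$ to mass $\mu$, and $D_{1/\mu}$ to go back) yield the exact identity $I(\mu)=\mu^{\sigma}I(1)$ for every $\mu>0$. Then
\[
I(\theta\mu)=(\theta\mu)^{\sigma}I(1)=\theta^{\sigma}\bigl(\mu^{\sigma}I(1)\bigr)=\theta^{\sigma}I(\mu),
\]
and because $I(\mu)=\mu^{\sigma}I(1)<0$ by Lemma~\ref{L2subl-thm2-1} while $\theta^{\sigma}>\theta>0$ for $\theta>1$, $\sigma>1$, multiplying the negative number $I(\mu)$ by the larger factor $\theta^{\sigma}$ gives $\theta^{\sigma}I(\mu)<\theta I(\mu)$, i.e. $I(\theta\mu)<\theta I(\mu)$, as required.

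I do not expect a genuine obstacle here: this is a pure scaling computation, and the only step demanding care is arranging the anisotropic dilation so that the \emph{entire} quadratic part of $\mathcal{H}$ is homogeneous, even though $\partial_{x}$ and $|D_{y}|^{\frac12}$ a priori transform with different powers. The choice $c=2b$ resolves exactly this, and once the homogeneity exponent $\sigma=\tfrac{5-p}{7-3p}$ is in hand, the strict inequality is forced by the single arithmetic fact $\sigma>1$ (equivalent to $p>1$) combined with the negativity $I(1)<0$.
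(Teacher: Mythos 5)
Your proof is correct, but it takes a genuinely different route from the paper. The paper's argument is the standard Cazenave--Lions one: take $u$ with $\mathcal{M}(u)=\mu$ and $\mathcal{H}(u)<I(\mu)/2<0$ (which forces $\|u\|_{L^{p+1}}^{p+1}>0$, indeed $\|u\|_{L^{p+1}}^{p+1}\geq -\tfrac{p+1}{2}I(\mu)$ uniformly), rescale by the constant $\sqrt{\theta}$ so that the mass becomes $\theta\mu$, and observe that the quadratic part of $\mathcal{H}$ picks up a factor $\theta$ while the nonlinear part picks up $\theta^{(p+1)/2}>\theta$; the strict gap then survives the infimum because of the uniform lower bound on the $L^{p+1}$ norm. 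You instead exploit the full anisotropic scaling group to make $\mathcal{H}$ exactly homogeneous relative to the mass constraint, arriving at the identity $I(\mu)=\mu^{\sigma}I(1)$ with $\sigma=\tfrac{5-p}{7-3p}>1$; combined with $I<0$ from Lemma~\ref{L2subl-thm2-1} this gives the strict inequality immediately. Your computations of the exponents $a,b,c$ and of $\sigma$ check out, and your approach buys more: the exact power law for $I(\mu)$, and a clean bypass of the slightly delicate step of passing a strict inequality through an infimum (which the paper glosses over with ``taking an infimum on $u$''). What it costs is generality and the restriction $p\neq 7/3$ (harmless here, since the whole section assumes $1<p<7/3$ and Lemma~\ref{L2subl-thm2-1} already requires it): the paper's argument only uses multiplication by a constant and would survive perturbations of the functional that destroy exact homogeneity. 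Both are valid; yours is arguably the cleaner proof of this particular lemma.
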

\begin{proof}
We take $u \in X$ satisfying $\mathcal{H}(u) < I(\mu)/2 \; (<0)$ 
and $\mathcal{M}(u) = \mu$. 
It follows that 
\begin{equation*} 
\frac{1}{2} I(\mu) \geq \mathcal{H}(u) \geq - \frac{1}{p+1} 
\|u\|_{L^{p+1}}^{p+1}. 
\end{equation*}
This yields that 
\begin{equation} \label{eq2-2}
\|u\|_{L^{p+1}}^{p+1}  >0. 
\end{equation} 
From the definition of $I(\mu), \theta > 1$ and \eqref{eq2-2}, we have 
\begin{equation*}
\begin{split}
I(\theta \mu)
& \leq \mathcal{H}(\sqrt{\theta}u) \\
& = \frac{\theta}{2} \left(\|\partial_{x} u\|_{L^{2}}^{2} 
+ \||D_{y}|^{\frac{1}{2}} u\|_{L^{2}}^{2} \right) - \frac{\theta^{\frac{p+1}{2}}}{p+1}
\|u\|_{L^{p+1}}^{p+1} \\
& < \frac{\theta}{2} \left(\|\partial_{x} u\|_{L^{2}}^{2} 
+ \||D_{y}|^{\frac{1}{2}} u\|_{L^{2}}^{2} \right) - \frac{\theta}{p+1} \|u\|_{L^{p+1}}^{p+1} 
= \theta \mathcal{H}(u).  
\end{split}
\end{equation*}
Taking an infimum on $u$, we have 
$I(\theta \mu) < \theta I(\mu)$.  
\end{proof}

\begin{lemma} \label{L2subl-thm2-4}
Suppose that $\{u_{n}\} \subset X$ satisfies 
$\lim_{n \to \infty} \mathcal{M}(u_{n}) = \mu$ and 
$\lim_{n \to \infty} \mathcal{H}(u_{n}) = I(\mu)$. 
Then, there exists $\{(x_{n}, y_{n})\} \subset \mathbb{R}^{2}$ 
such that $\{u_{n}(\cdot + x_{n}, \cdot + y_{n})\}$ is relatively compact 
in $X$. 
In particular, $\Sigma(\mu)$ is not empty. 
\end{lemma}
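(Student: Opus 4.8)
The plan is to run the concentration--compactness argument of Cazenave and P.-L.~Lions, using the three preliminary lemmas as the trichotomy-killing tools. First I would check that the minimizing sequence is bounded in $X$: since $\mathcal{H}(u_n) \to I(\mu)$ is bounded, estimate \eqref{eq2-1} from the proof of Lemma \ref{L2subl-thm2-1} gives a uniform bound on $\|\partial_x u_n\|_{L^2}^2 + \||D_y|^{\frac12}u_n\|_{L^2}^2$, which together with $\mathcal{M}(u_n)\to\mu$ yields $\sup_n\|u_n\|_X<\infty$. To rule out vanishing, I would use $I(\mu)<0$ (Lemma \ref{L2subl-thm2-1}): for large $n$ we have $\mathcal{H}(u_n)<I(\mu)/2<0$, and since the quadratic part of $\mathcal{H}$ is nonnegative this forces $\|u_n\|_{L^{p+1}}^{p+1}\geq -\tfrac{p+1}{2}I(\mu)>0$ uniformly. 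The hypotheses of Lemma \ref{L2subl-thm2-2} are then met, so after translating by a suitable $\{(x_n,y_n)\}\subset\R^2$ and passing to a subsequence I obtain $\widetilde{u}_n:=u_n(\cdot+x_n,\cdot+y_n)\rightharpoonup u_\infty$ in $X$ with $u_\infty\neq0$.

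The core of the argument is to show that no mass is lost, i.e.\ $\mathcal{M}(u_\infty)=\mu$. Set $\nu:=\mathcal{M}(u_\infty)$; by weak lower semicontinuity $0<\nu\leq\mu$. Writing $v_n:=\widetilde{u}_n-u_\infty\rightharpoonup0$, I would apply the Hilbert-space orthogonality splitting to the quadratic terms (valid for the nonlocal $|D_y|$ term as well, since it is a squared norm) and the Brezis--Lieb lemma to the $L^{p+1}$ term, obtaining $\mathcal{M}(\widetilde{u}_n)=\mathcal{M}(u_\infty)+\mathcal{M}(v_n)+o(1)$ and $\mathcal{H}(\widetilde{u}_n)=\mathcal{H}(u_\infty)+\mathcal{H}(v_n)+o(1)$. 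Hence $\mathcal{M}(v_n)\to\mu-\nu$ and, along a further subsequence, $\mathcal{H}(v_n)$ converges. If $0<\nu<\mu$, then because $u_\infty$ is admissible for $I(\nu)$ and each $v_n$ is admissible for $I(\mathcal{M}(v_n))$, using continuity of $\mu\mapsto I(\mu)$ (which follows from the scaling behind Lemma \ref{L2subl-thm2-3}) I would deduce $I(\mu)=\mathcal{H}(u_\infty)+\lim_n\mathcal{H}(v_n)\geq I(\nu)+I(\mu-\nu)$. On the other hand, Lemma \ref{L2subl-thm2-3} applied with $\theta=\mu/\nu$ and $\theta=\mu/(\mu-\nu)$ gives $I(\mu)<\tfrac{\mu}{\nu}I(\nu)$ and $I(\mu)<\tfrac{\mu}{\mu-\nu}I(\mu-\nu)$, and taking the convex combination yields the strict subadditivity $I(\mu)<I(\nu)+I(\mu-\nu)$, a contradiction. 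Therefore $\nu=\mu$.

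Finally I would upgrade weak to strong convergence. From $\nu=\mu$ we get $\|v_n\|_{L^2}\to0$, and since $\{v_n\}$ is bounded in $X$, the anisotropic Gagliardo--Nirenberg inequality \eqref{GN-ineq} forces $\|v_n\|_{L^{p+1}}\to0$; hence $\mathcal{H}(v_n)\geq\tfrac12(\|\partial_x v_n\|_{L^2}^2+\||D_y|^{\frac12}v_n\|_{L^2}^2)-o(1)$ and $\liminf_n\mathcal{H}(v_n)\geq0$. Combined with $\mathcal{H}(u_\infty)\geq I(\mu)$ (as $u_\infty$ is admissible) and the splitting $I(\mu)=\mathcal{H}(u_\infty)+\lim_n\mathcal{H}(v_n)$, this gives $\mathcal{H}(u_\infty)=I(\mu)$ and $\lim_n\mathcal{H}(v_n)=0$; together with $\|v_n\|_{L^{p+1}}\to0$ the latter yields $\|\partial_x v_n\|_{L^2}+\||D_y|^{\frac12}v_n\|_{L^2}\to0$, so $\widetilde{u}_n\to u_\infty$ strongly in $X$. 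This proves relative compactness, and since $\mathcal{M}(u_\infty)=\mu$ and $\mathcal{H}(u_\infty)=I(\mu)$, we conclude $u_\infty\in\Sigma(\mu)$, so $\Sigma(\mu)\neq\emptyset$.

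I expect the main obstacle to be excluding dichotomy, i.e.\ the step $\nu=\mu$. It relies on the Brezis--Lieb/orthogonality decomposition of $\mathcal{H}$, where the nonlocal operator $|D_y|$ must be accommodated (harmless, since it contributes a squared Hilbert-space seminorm), and on the continuity of $\mu\mapsto I(\mu)$ needed to pass to the limit in $I(\mathcal{M}(v_n))$, before the strict subadditivity derived from Lemma \ref{L2subl-thm2-3} delivers the contradiction.
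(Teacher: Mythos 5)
Your argument is correct and follows essentially the same Cazenave--Lions concentration--compactness route as the paper: boundedness from \eqref{eq2-1}, non-vanishing of the $L^{p+1}$ norm from $I(\mu)<0$, weak convergence after translation via Lemma \ref{L2subl-thm2-2}, the orthogonality/Brezis--Lieb splitting of $\mathcal{M}$ and $\mathcal{H}$, and strict subadditivity from Lemma \ref{L2subl-thm2-3} to exclude dichotomy, followed by the same upgrade to strong convergence. The only minor deviation is that you invoke continuity of $\mu\mapsto I(\mu)$ to pass to the limit in $I(\mathcal{M}(v_n))$, whereas the paper sidesteps this by applying Lemma \ref{L2subl-thm2-3} with $\theta=\mu/\nu_n$ for each $n$ and taking the limit only of the explicit quantity $\tfrac{\nu_n}{\mu}I(\mu)$; your route is fine, but the continuity claim would need the short scaling argument you allude to.
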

\begin{proof}
It follows from \eqref{eq2-1} that 
$\{u_{n}\}$ is bounded in $X$. 
This together with \eqref{eq2-2} and Lemma \ref{L2subl-thm2-2} 
yields that there exists a sub-sequence of $\{u_{n}\}$ 
(we still denote it by the same letter), 
a sequence $\{(x_{n}, y_{n})\} \subset \mathbb{R}^{2}$ 
and $u_{\infty} \in X \setminus \{0\}$ 
such that  
$u_{n}(\cdot + x_{n}, \cdot + y_{n})\rightharpoonup  u_{\infty}$ in $X$ as $n \to \infty$. 
We shall show that $\mathcal{M}(u_{\infty})= \mu$. 
We put $\widetilde{u}_{n}(\cdot, \cdot) = u_{n}(\cdot + x_{n}, \cdot + y_{n})$. 
From the weak lower semi-continuity, we have 
\[
\mathcal{M}(u_{\infty}) \leq \liminf_{n \to \infty} 
\mathcal{M}(u_{n}) 
= \mu. 
\]
Suppose the contrary that $\mathcal{M}(u_{\infty}) < \mu$. 
From the weak convergence, we obtain 
\begin{align}
\lim_{n \to \infty} \left\{ 
\|\widetilde{u}_{n}\|_{X}^{2} - 
\|\widetilde{u}_{n} - u_{\infty}\|_{X}^{2} - 
\|u_{\infty}\|_{X}^{2}
\right\} = 0, 
\label{eq2-3}
\\
\lim_{n \to \infty} \left\{ 
\mathcal{M}(\widetilde{u}_{n}) - 
\mathcal{M}(\widetilde{u}_{n} - u_{\infty}) - 
\mathcal{M}(u_{\infty})
\right\} = 0
\label{eq2-4}
\end{align}
and by Lemma \ref{16/03/25/01:03}, we have 
\begin{equation} \label{eq2-5}
\lim_{n \to \infty} \left\{ 
\mathcal{H}(\widetilde{u}_{n}) - 
\mathcal{H}(\widetilde{u}_{n} - u_{\infty}) 
- \mathcal{H}(u_{\infty})
\right\} = 0. 
\end{equation}
We set $\nu_{n} = \mathcal{M}(\widetilde{u}_{n} - u_{\infty})$ and 
$\mu_{\infty} = \mathcal{M}(u_{\infty})$. 
Since the sequence $\{\nu_{n}\} \subset \mathbb{R}_{\geq 0}$ is bounded, 
there exist a sub-sequence of $\{\nu_{n}\}$ 
(we still denote it by the same letter) and 
$\nu_{\infty} \geq 0$ 
such that $\lim_{n \to \infty} \nu_{n} = \nu_{\infty}$. 
By \eqref{eq2-4}, we obtain 
$\mu = \nu_{\infty} + \mu_{\infty}$. 
From \eqref{eq2-5}, Lemma \ref{L2subl-thm2-3} 
with $\theta = \mu/\nu_{n}$ and $\mu/\mu_{\infty}$ and 
$\lim_{n \to \infty} \nu_{n} = \nu_{\infty}$, we have 
\begin{equation*}
\begin{split}
I(\mu) 
& \geq \lim_{n \to \infty} I(\nu_{n}) + I(\mu_{\infty}) \\
& > \lim_{n \to \infty} \frac{\nu_{n}}{\mu} I(\mu) 
+ \frac{\mu_{\infty}}{\mu} I(\mu) \\
& = \frac{\nu_{\infty}}{\mu} I(\mu) + \frac{\mu_{\infty}}{\mu} I(\mu) 
= I(\mu), 
\end{split}
\end{equation*}
which is a contradiction. 
Therefore, we obtain $\mathcal{M}(u_{\infty}) = \mu$. 

It from the definition of $I(\mu)$ that 
$I(\mu) \leq \mathcal{H}(u_{\infty})$. 
From $\mathcal{M}(u_{\infty})= \mu$ and \eqref{eq2-4}, 
we obtain $\lim_{n \to \infty} \mathcal{M}(\widetilde{u}_{n} - u_{\infty}) = 0$. 
This together with the boundedness of $\{\widetilde{u}_{n}\}$ in $X$ yields that 
$\lim_{n \to \infty} \|\widetilde{u}_{n} - u_{\infty}\|_{L^{p+1}} = 0$. 
Moreover, it follows from the weak lower semi-continuity that 
\[
\mathcal{H}(u_{\infty}) \leq \liminf_{n \to \infty}\mathcal{H}
(\widetilde{u}_{n}) = I(\mu). 
\]
Thus, we see that $\mathcal{H}(u_{\infty}) = I(\mu)$. 
Namely, $u_{\infty} \in \Sigma(\mu)$. 
Moreover, we obtain 
\begin{equation*}
\begin{split}
\lim_{n \to \infty} 
\frac{1}{2} \left(\|\partial_{x} \widetilde{u}_{n}\|_{L^{2}}^{2} 
+ \||D_{y}|^{\frac{1}{2}} \widetilde{u}_{n}\|_{L^{2}}^{2} \right)
& = \lim_{n \to \infty} \left\{\mathcal{H}(\widetilde{u}_{n}) 
+ \frac{1}{p+1} \|\widetilde{u}_{n}\|_{L^{p+1}}^{p+1} \right\} \\
& = \mathcal{H}(u_{\infty}) + \frac{1}{p+1} \|u_{\infty}\|_{L^{p+1}}^{p+1} \\
& = 
\frac{1}{2} \left(\|\partial_{x} u_{\infty}\|_{L^{2}}^{2} 
+ \||D_{y}|^{\frac{1}{2}} u_{\infty}\|_{L^{2}}^{2} \right). 
\end{split}
\end{equation*}
This implies that $\lim_{n \to \infty} \widetilde{u}_{n} = u_{\infty}$ in $X$. 
This completes the proof. 
\end{proof}
We are now in a position to prove Theorem \ref{L2subl-thm2-0}. 
\begin{proof}[Proof of Theorem \ref{L2subl-thm2-0}]
From Lemma \ref{L2subl-thm2-4}, we see that $\Sigma(\mu)$ is non-empty. 
Thus, it is enough to show the stability of $\Sigma(\mu)$. 
We shall show this by contradiction. 
Suppose the contrary that $\Sigma(\mu)$ is not stable. 
Then, there exists $\varepsilon_{0} > 0$, a sequence $\{\psi_{0, n}\} 
\subset X$ and $\{t_{n}\} \subset \mathbb{R}$ such that 
\begin{align}
& \lim_{n \to \infty} \inf_{u \in \Sigma(\mu)} 
\|\psi_{0, n} - u\|_{X} = 0, \label{eq2-6} \\
& \inf_{u \in \Sigma(\mu)} \|\psi_{n}(t_{n}) - u\|_{X} \geq \varepsilon_{0}, 
\label{eq2-7}
\end{align}
where $\psi_{n}(t)$ is a solution to \eqref{WS} with 
$\psi_{n}(0) = \psi_{0, n}$. 
From \eqref{eq2-6} and the conservation laws \eqref{conserv}, we have 
\begin{align*}
& \lim_{n \to \infty} 
\mathcal{M}(\psi_{n}(t_{n})) = 
\lim_{n \to \infty} \mathcal{M}(\psi_{0, n}) 
= \mu, \\
& \lim_{n \to \infty} \mathcal{H}(\psi_{n}(t_{n})) 
= \lim_{n \to \infty} \mathcal{H}(\psi_{0, n}) = I(\mu). 
\end{align*}
By Lemma \ref{L2subl-thm2-4}, there exist 
a sub-sequence of $\{\psi_{n}(t_{n})\}$ (we still denote it by the 
same letter), $\{(x_{n}, y_{n})\} \subset \mathbb{R}^{2}$ and 
$\psi_{\infty} \in X$ such that 
\[
\lim_{n \to \infty} \psi_{n}(t_{n}, \cdot + x_{n}, \cdot + y_{n}) 
= \psi_{\infty} \qquad 
\mbox{in $X$}.  
\]
This yields that $\psi_{\infty} \in \Sigma(\mu)$, which 
contradicts \eqref{eq2-7}. 
This completes the proof. 
\end{proof}

\section{Instability result}\label{section-instability}
This section is devoted to the proof of Theorem \ref{instability}. 
From the result of Grillakis, Shatah and Strauss
~\cite{Grillakis-Shatah-Strauss}, we can easily obtain the following: 
\begin{lemma}[Lemma 3.2 of Grillakis, Shatah 
and Strauss~\cite{Grillakis-Shatah-Strauss}]
\label{thm3-1}
There exist $\varepsilon > 0$ and a $C^{2}$ map 
$\theta: U_{\varepsilon}(Q_{\omega}) \to \mathbb{R}/2\pi \mathbb{Z}$ 
such that for all $u \in U_{\varepsilon}(Q_{\omega})$ and 
all $s \in \mathbb{R}/ 2\pi \mathbb{Z}$, 
\begin{enumerate}
\item[\rm (i)]
$\|e^{i \theta(u)}u - Q_{\omega}\|_{X} \leq 
\|e^{i s}u - Q_{\omega}\|_{X}$, 
\item[\rm (ii)]
$(e^{i \theta(u)}u, i Q_{\omega})_{X} = 0$, 
\item[\rm (iii)]
$\theta(e^{is} u) = \theta(u) - s$, 
\item[\rm (iv)]
$
\theta^{\prime}(u) = 
- \frac{i (-\partial_{xx} + |D_{y}| + 1)^{-1} e^{-i \theta(u)} Q_{\omega}}
{(Q_{\omega}, e^{i \theta(u)}u)_{X}}, 
$
\item[\rm (v)]
$i \theta^{\prime}(u)$ is a $C^{1}$ function from 
$U_{\varepsilon}(Q_{\omega})$ into $X$,  
\end{enumerate}
where 
\[
U_{\varepsilon}(Q_{\omega}) = \left\{
u \in X \mid \inf_{\theta \in \mathbb{R}} 
\|u - e^{i \theta}Q_{\omega}\|_{X} < \varepsilon
\right\}
\]
\end{lemma}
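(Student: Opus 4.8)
The plan is to obtain $\theta$ from the implicit function theorem, exactly as in the abstract framework of \cite{Grillakis-Shatah-Strauss}, and then to read off the five properties from the construction. I would work with the real-valued map
\[
F(u,s) := \mathrm{Re}\,(e^{is}u, iQ_\omega)_X, \qquad (u,s)\in X\times(\R/2\pi\Z).
\]
Since $(Q_\omega, iQ_\omega)_X = -i\|Q_\omega\|_X^2$ is purely imaginary we have $F(Q_\omega,0)=0$, while
\[
\partial_s F(Q_\omega,0) = \mathrm{Re}\,(iQ_\omega, iQ_\omega)_X = \|Q_\omega\|_X^2 > 0
\]
because $Q_\omega\neq 0$. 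As $F$ is smooth (trigonometric in $s$ and a bounded bilinear form in $u$), the implicit function theorem produces a neighborhood of $Q_\omega$ and a $C^\infty$, in particular $C^2$, function $u\mapsto\theta(u)$ with $\theta(Q_\omega)=0$ and $F(u,\theta(u))=0$, which is exactly (ii). To pass from a neighborhood of $Q_\omega$ to the full tube $U_\varepsilon(Q_\omega)$ I would exploit the $S^1$-equivariance: each $u\in U_\varepsilon(Q_\omega)$ is $X$-close to some $e^{i\sigma}Q_\omega$, and I set $\theta(u):=\theta(e^{-i\sigma}u)+\sigma$. This is well defined modulo $2\pi$, makes $\theta$ a map into $\R/2\pi\Z$, and yields at once the equivariance (iii) (which also follows from the local uniqueness in the implicit function theorem applied to $e^{is}u$).

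Property (i) is then the variational reading of (ii). Setting $g(s):=\|e^{is}u-Q_\omega\|_X^2 = \|u\|_X^2 - 2\,\mathrm{Re}\,(e^{is}u,Q_\omega)_X + \|Q_\omega\|_X^2$, one checks that $g'(s)=0$ is equivalent to $F(u,s)=0$, so $\theta(u)$ is a critical point of $g$; moreover $g''(s)=2\,\mathrm{Re}\,(e^{is}u,Q_\omega)_X$ equals $2\|Q_\omega\|_X^2>0$ at $(Q_\omega,0)$ and hence stays positive after shrinking $\varepsilon$. Thus $\theta(u)$ is a strict local minimizer, and since $s$ ranges over the compact circle it is the global minimizer once $u$ is close enough to the orbit, giving (i). Equivalently, one could take (i) as the definition of $\theta$ and recover (ii) as its Euler--Lagrange equation.

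For (iv) I would differentiate $F(u,\theta(u))=0$ in $u$. The chain rule gives, for $h\in X$,
\[
\theta'(u)[h] = -\frac{\mathrm{Re}\,(e^{i\theta(u)}h, iQ_\omega)_X}{(Q_\omega, e^{i\theta(u)}u)_X},
\]
the denominator being precisely $\partial_s F(u,\theta(u))$, which is close to $\|Q_\omega\|_X^2$ and hence nonzero on the tube after shrinking $\varepsilon$; this is where $Q_\omega\neq 0$ enters once more. It then remains to represent the bounded functional $h\mapsto\theta'(u)[h]$ by an element of $X$: moving the phase onto the second slot and applying the Riesz isomorphism reproduces the stated formula (iv), in which the resolvent $(-\partial_{xx}+|D_y|+1)^{-1}$ appears exactly as the Riesz identification between $X^{*}$ and $X$, since $(\cdot,\cdot)_X = ((-\partial_{xx}+|D_y|+1)\cdot,\cdot)_{L^2}$. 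Finally (v) is a consequence of (iv): the numerator $i(-\partial_{xx}+|D_y|+1)^{-1}e^{-i\theta(u)}Q_\omega$ lies in $X$ and depends on $u$ in a $C^1$ way (through $\theta\in C^2$ and the boundedness of the resolvent), while the denominator is $C^1$ and bounded away from $0$, so $u\mapsto i\theta'(u)$ is $C^1$ from $U_\varepsilon(Q_\omega)$ into $X$.

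The argument is essentially routine; the only genuine input is the transversality $\partial_s F(Q_\omega,0)=\|Q_\omega\|_X^2\neq 0$, which here is immediate. What requires care, rather than being a real obstacle, is the bookkeeping of inner products --- the $X$-inner product governing (i)--(ii) against the $L^{2}$-pairing that surfaces through the Riesz identification in (iv) --- together with fixing $\varepsilon$ once and for all so that the denominator stays bounded below and the local minimizer coincides with the global one uniformly over the tube.
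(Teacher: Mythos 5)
The paper does not prove this lemma at all: it is stated as a direct citation of Lemma 3.2 of Grillakis--Shatah--Strauss, with the remark that it ``can easily be obtained'' from their abstract framework. Your proposal therefore supplies an argument where the paper supplies none, and the argument you give is precisely the standard GSS construction: define $\theta$ by the implicit function theorem applied to the transversality condition $\partial_s F(Q_\omega,0)=\|Q_\omega\|_X^2\neq 0$, extend to the tube by $S^1$-equivariance, and read off (i)--(v). The computations you sketch are correct: $g'(s)=2F(u,s)$ and $g''(s)=2\,\mathrm{Re}\,(e^{is}u,Q_\omega)_X$, so the critical point is a strict minimum near the orbit, and the global-versus-local minimizer issue on the circle is handled by the uniform lower bound on $g$ away from $s=\theta(u)$ for $u$ close to the orbit. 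Two points of bookkeeping are worth making explicit, since the paper's statement glosses over them exactly as GSS do: condition (ii) must be read in the \emph{real} Hilbert space structure, i.e.\ as $\mathrm{Re}\,(e^{i\theta(u)}u,iQ_\omega)_X=0$, because a single real parameter $s$ cannot annihilate a genuinely complex-valued pairing --- your choice of $F$ as the real part is the correct reading; and the precise operator appearing in (iv) depends on which duality identification ($L^2$-pairing versus $X$-inner product) one uses to represent the bounded functional $h\mapsto\theta'(u)[h]$ by an element of $X$, which is exactly the Riesz-map role you assign to $(-\partial_{xx}+|D_y|+1)^{-1}$. With those conventions fixed, your proof is complete and is the intended one.
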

From the scale invariance of the equation \eqref{WS}, 
we can show the following:
\begin{lemma} \label{thm3-1-1}
Assume that $7/3 < p < 5$. 
Putting
\begin{equation} \label{eq3-2}
\psi_{\omega} = \frac{3}{4} Q_{\omega} 
+ \frac{1}{2}x \partial_{x} Q_{\omega} + y \partial_{y} Q_{\omega},  
\end{equation}
we have 
$\langle \mathcal{S}_{\omega}^{\prime \prime}(Q_{\omega})\psi_{\omega}, 
\psi_{\omega}\rangle < 0$ and 
$\langle Q_{\omega}, \psi_{\omega} \rangle = 0$. 
\end{lemma}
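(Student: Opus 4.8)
The plan is to recognize $\psi_{\omega}$ as the infinitesimal generator of the mass-preserving scaling $T_{\lambda}$ introduced in \eqref{L2scale}, and to reduce both assertions to elementary facts about the scalar function $\lambda \mapsto \mathcal{S}_{\omega}(T_{\lambda}Q_{\omega})$. First I would observe that, writing $\Phi(\lambda) := T_{\lambda}Q_{\omega} = \lambda^{3/4}Q_{\omega}(\lambda^{1/2}\cdot,\lambda\cdot)$, a direct differentiation gives $\frac{d}{d\lambda}\big|_{\lambda=1}\Phi(\lambda) = \psi_{\omega}$; thus $\psi_{\omega}$ is exactly the velocity of the scaling curve through $Q_{\omega}$. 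Since $T_{\lambda}$ preserves the $L^{2}$-norm, $\|\Phi(\lambda)\|_{L^{2}}^{2}\equiv\|Q_{\omega}\|_{L^{2}}^{2}$ is constant in $\lambda$, and differentiating at $\lambda=1$ yields $\langle Q_{\omega},\psi_{\omega}\rangle = 0$. Alternatively, the orthogonality can be checked directly by integration by parts, using $\int x\,Q_{\omega}\partial_{x}Q_{\omega} = -\tfrac12\|Q_{\omega}\|_{L^{2}}^{2}$ and the analogous identity in $y$.

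For the negativity, the key point is that $Q_{\omega}$ is a critical point of $\mathcal{S}_{\omega}$, so $\mathcal{S}_{\omega}'(Q_{\omega}) = 0$. Setting $g(\lambda) := \mathcal{S}_{\omega}(\Phi(\lambda))$ and applying the chain rule, the first-order contribution to the second derivative drops out:
$$g''(1) = \langle \mathcal{S}_{\omega}''(Q_{\omega})\psi_{\omega},\psi_{\omega}\rangle + \langle \mathcal{S}_{\omega}'(Q_{\omega}),\Phi''(1)\rangle = \langle \mathcal{S}_{\omega}''(Q_{\omega})\psi_{\omega},\psi_{\omega}\rangle.$$
It therefore suffices to compute $g$ explicitly. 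Using the same scaling identities as in the proof of Lemma \ref{L2subl-thm2-1} — namely $\|\partial_{x}\Phi(\lambda)\|_{L^{2}}^{2} = \lambda\|\partial_{x}Q_{\omega}\|_{L^{2}}^{2}$, $\||D_{y}|^{1/2}\Phi(\lambda)\|_{L^{2}}^{2} = \lambda\||D_{y}|^{1/2}Q_{\omega}\|_{L^{2}}^{2}$, $\|\Phi(\lambda)\|_{L^{2}}^{2} = \|Q_{\omega}\|_{L^{2}}^{2}$ and $\|\Phi(\lambda)\|_{L^{p+1}}^{p+1} = \lambda^{3(p-1)/4}\|Q_{\omega}\|_{L^{p+1}}^{p+1}$ — one finds
$$g(\lambda) = \frac{\lambda}{2}\left(\|\partial_{x}Q_{\omega}\|_{L^{2}}^{2} + \||D_{y}|^{1/2}Q_{\omega}\|_{L^{2}}^{2}\right) + \frac{\omega}{2}\|Q_{\omega}\|_{L^{2}}^{2} - \frac{\lambda^{3(p-1)/4}}{p+1}\|Q_{\omega}\|_{L^{p+1}}^{p+1}.$$

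Differentiating twice and writing $\alpha := \tfrac{3(p-1)}{4}$ gives $g''(1) = -\alpha(\alpha-1)\,\tfrac{1}{p+1}\|Q_{\omega}\|_{L^{p+1}}^{p+1}$. Since $\alpha>1$ precisely when $p>7/3$, and $Q_{\omega}\neq 0$ forces $\|Q_{\omega}\|_{L^{p+1}}>0$, the right-hand side is strictly negative throughout $7/3<p<5$, which is the desired conclusion. As an internal consistency check, the identity $g'(1)=\langle\mathcal{S}_{\omega}'(Q_{\omega}),\psi_{\omega}\rangle = 0$ reduces to the Pohozaev-type relation $\tfrac12(\|\partial_{x}Q_{\omega}\|_{L^{2}}^{2}+\||D_{y}|^{1/2}Q_{\omega}\|_{L^{2}}^{2}) = \alpha\,\tfrac{1}{p+1}\|Q_{\omega}\|_{L^{p+1}}^{p+1}$, though it is not actually needed for the sign of $g''(1)$.

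The only genuinely technical step is justifying that $\psi_{\omega}\in X$ and that $\lambda\mapsto\Phi(\lambda)$ is a $C^{2}$ curve into $X$ on a neighborhood of $\lambda=1$, so that the chain-rule identity for $g''(1)$ is legitimate; both hinge on the regularity and decay of the ground state, since the weighted quantities $x\partial_{x}Q_{\omega}$ and $y\partial_{y}Q_{\omega}$ must be controlled in $X$. For these I would invoke the ground-state regularity results collected in the appendix. The $C^{2}$-smoothness of $\mathcal{S}_{\omega}$ itself poses no difficulty here, since in the instability range $p>7/3$ one has $p+1>3$, so $u\mapsto\|u\|_{L^{p+1}}^{p+1}$ is twice continuously Fr\'echet differentiable.
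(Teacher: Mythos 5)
Your proposal is correct and follows essentially the same route as the paper: both identify $\psi_{\omega}$ as $\frac{d}{d\lambda}\big|_{\lambda=1}T_{\lambda}Q_{\omega}$ for the mass-preserving scaling \eqref{L2scale}, compute $\mathcal{S}_{\omega}(T_{\lambda}Q_{\omega})$ explicitly, use $\mathcal{S}_{\omega}'(Q_{\omega})=0$ to identify $g''(1)$ with $\langle\mathcal{S}_{\omega}''(Q_{\omega})\psi_{\omega},\psi_{\omega}\rangle$, and obtain the sign from $\frac{3(p-1)}{4}>1$ for $p>7/3$; your coefficient $-\alpha(\alpha-1)\frac{1}{p+1}\|Q_{\omega}\|_{L^{p+1}}^{p+1}$ agrees with the paper's $-\frac{3(p-1)(3p-7)}{16(p+1)}\|Q_{\omega}\|_{L^{p+1}}^{p+1}$. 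The appeal to the appendix for $\psi_{\omega}\in X$ is exactly what the paper does via Proposition \ref{regularity}.
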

\begin{proof}
For each $\lambda>0$, we see that 
\[
\mathcal{S}_{\omega}(T_{\lambda}u) 
= \frac{\lambda}{2} \left(\|\partial_{x} u\|_{L^{2}}^{2} 
+ \||D_{y}|^{\frac{1}{2}} u\|_{L^{2}}^{2} \right) + \frac{\omega}{2} \|u\|_{L^{2}}^{2} 
- \frac{\lambda^{\frac{3}{4}(p-1)}}{p+1} \|u\|_{L^{p+1}}^{p+1},  
\]
where $T_{\lambda}$ is the scaling operator defined by 
\eqref{L2scale}. 
Then, we can compute 
\[
\frac{d^{2}}{d \lambda^{2}} \mathcal{S}_{\omega}
(T_{\lambda}u)|_{\lambda = 1} 
= - \frac{3(p-1)(3p-7)}{16(p+1)} \|u\|_{L^{p+1}}^{p+1} < 0. 
\]
We note that $\psi_{\omega} \in X$ 
(see Proposition \ref{regularity} below). 
This together with $\mathcal{S}_{\omega}^{\prime}(Q_{\omega}) = 0$, 
implies 
\[
\frac{d^{2}}{d \lambda^{2}} \mathcal{S}_{\omega}
(T_{\lambda}Q_{\omega})|_{\lambda = 1} 
= \langle \mathcal{S}_{\omega}^{\prime\prime}(Q_{\omega})
T_{\lambda}Q_{\omega}|_{\lambda =1}, T_{\lambda}Q_{\omega}|_{\lambda =1} \rangle 
= \langle \mathcal{S}_{\omega}^{\prime\prime}(Q_{\omega})
\psi_{\omega}, \psi_{\omega} \rangle.  
\]
Since $\|T_{\lambda} Q_{\omega}\|_{L^{2}}^{2} = \|Q_{\omega}\|_{L^{2}}^{2}$, 
we can easily check that $\langle Q_{\omega}, \psi_{\omega} \rangle = 0$. 
Thus, we obtain the desired result. 
\end{proof}

For $u \in U_{\varepsilon}(Q_{\omega})$, 
we define 
\begin{equation} \label{eq3-1}
A(u) = - \langle e^{i \theta(u)}u, i \psi_{\omega} \rangle.  
\end{equation}
\begin{lemma} \label{thm3-2}
Let $A(u)$ be the functional defined by \eqref{eq3-1}. 
Then, we have 
\begin{enumerate}
\item[\rm (i)]
$A(e^{is}u) = A(u)$ for all $s \in \mathbb{R}$, 
\item[\rm (ii)]
$R(i A^{\prime}(u)) \subset X$, 
\item[\rm (iii)]
$i A^{\prime}(Q_{\omega}) = - \psi_{\omega}$, 
\item[\rm (iv)]
$\langle u, i A^{\prime}(u) \rangle = 0$. 
\end{enumerate}
\end{lemma}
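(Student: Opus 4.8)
The plan is to extract all four assertions from a single computation of the Fréchet derivative of $A$, feeding in the structural properties of the phase map $\theta$ recorded in Lemma \ref{thm3-1} together with the two facts supplied by Lemma \ref{thm3-1-1}, namely $\psi_\omega \in X$ (via Proposition \ref{regularity}) and $\langle Q_\omega, \psi_\omega\rangle = 0$. Throughout I use that $\langle\cdot,\cdot\rangle$ is the real $L^2$ pairing, for which multiplication by $i$ and by any unimodular constant $e^{i\theta}$ are isometries: $\langle if, ig\rangle = \langle f,g\rangle$, $\langle e^{i\theta}f, g\rangle = \langle f, e^{-i\theta}g\rangle$, and $\langle f, ig\rangle = -\langle if, g\rangle$. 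These identities are the only algebra needed.

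Property (i) is immediate from Lemma \ref{thm3-1}~(iii): since $\theta(e^{is}u) = \theta(u)-s$, the product $e^{i\theta(e^{is}u)}e^{is}u = e^{i\theta(u)}u$ is unchanged, so $A(e^{is}u)=A(u)$. To handle the rest I differentiate. Writing the variation of $u\mapsto e^{i\theta(u)}u$ in a direction $h$ as $i\,\theta'(u)[h]\,e^{i\theta(u)}u + e^{i\theta(u)}h$ and applying the isometry identities, I expect
\begin{equation*}
DA(u)[h] = -\,\theta'(u)[h]\,\langle e^{i\theta(u)}u,\psi_\omega\rangle \;-\; \langle e^{i\theta(u)}h,\, i\psi_\omega\rangle .
\end{equation*}
Rewriting the last term as $-\langle h,\, i\,e^{-i\theta(u)}\psi_\omega\rangle$, one reads off that the gradient $A'(u)$ is a linear combination of $\theta'(u)$ and of the unimodular rotate of $\psi_\omega$ by $i\,e^{-i\theta(u)}$.

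With this formula the remaining items are short. For (ii), rotating by $i$ shows that $iA'(u)$ is a combination of $i\theta'(u)$, which lies in $X$ by Lemma \ref{thm3-1}~(v), and of a unimodular multiple of $\psi_\omega$, which lies in $X$ because $\psi_\omega\in X$ and multiplication by a unimodular constant preserves $X$; hence $R(iA'(u))\subset X$. For (iii) I evaluate at $u=Q_\omega$: conditions (i)--(ii) of Lemma \ref{thm3-1} force $\theta(Q_\omega)=0$, and the scalar $\langle Q_\omega,\psi_\omega\rangle$ vanishes by Lemma \ref{thm3-1-1}, so the $\theta'$-term drops out and
\begin{equation*}
DA(Q_\omega)[h] = -\langle h,\, i\psi_\omega\rangle ,
\end{equation*}
from which $iA'(Q_\omega) = -\psi_\omega$. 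Finally (iv) is free from (i): differentiating $A(e^{is}u)=A(u)$ at $s=0$ gives $DA(u)[iu]=0$, and the relation $\langle f, ig\rangle = -\langle if, g\rangle$ converts this into $\langle u,\, iA'(u)\rangle = 0$.

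The one genuinely delicate point is (ii): a priori the gradient lives only in $L^2$ (or $X^\ast$), and it is precisely the regularity of the phase map encoded in Lemma \ref{thm3-1}~(v) together with $\psi_\omega\in X$ that upgrades $iA'(u)$ to a genuine element of $X$. I would therefore be careful to route the phase contribution through the $X$-valued object $i\theta'(u)$ of part (v) rather than through $\theta'(u)$ itself. Everything else is bookkeeping with the sign conventions of the real pairing, the single substantive cancellation being the orthogonality $\langle Q_\omega,\psi_\omega\rangle=0$ that produces the clean identity in (iii).
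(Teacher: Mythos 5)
Your argument is correct and follows essentially the same route as the paper: differentiate $A$, use $\theta(e^{is}u)=\theta(u)-s$ for (i) and (iv), the $X$-valued regularity of the phase map for (ii), and $\theta(Q_{\omega})=0$ together with $\langle Q_{\omega},\psi_{\omega}\rangle=0$ for (iii). The only caveat is that your computation, taken literally, yields $iA'(Q_{\omega})=+\psi_{\omega}$ rather than $-\psi_{\omega}$; the paper's own proof exhibits the same sign discrepancy with the lemma statement, and the sign is immaterial for the subsequent use of $\psi_{\omega}$.
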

\begin{proof}
{\rm (i)}.
It follows from Lemma \ref{thm3-1} {\rm (iii)} that 
\[
A(e^{is}u) = 
- \langle e^{i \theta(e^{is}u)} e^{is}u, i \psi_{\omega} \rangle 
= - \langle e^{i \theta(u) -is} e^{is}u, i \psi_{\omega} \rangle 
= - \langle e^{i \theta(u)} u, i \psi_{\omega} \rangle 
= A(u). 
\]

{\rm (ii)}.
From \eqref{eq3-1}, we see that 
\[
\langle A^{\prime}(u), v \rangle 
= - \langle e^{i \theta(u)}v, i \psi_{\omega} \rangle 
- \langle e^{i \theta(u)}u, i \psi_{\omega} \rangle
\langle i \theta^{\prime}(u), v \rangle 
\]
for $v \in X$. 
This yields that 
\begin{equation} \label{eq3-3}
A^{\prime}(u) 
= - i e^{-i \theta(u)} \psi_{\omega} - 
\langle e^{i \theta(u)}u, \psi_{\omega} \rangle \theta^{\prime}(u).
\end{equation}
This together with Lemma \ref{thm3-1} (iv) 
yields that $A^{\prime}(u) \in X$. 

{\rm (iii)}. 
Substituting $Q_{\omega}$ for $u$ in \eqref{eq3-3}, we have 
\[
A^{\prime}(Q_{\omega}) = 
-i e^{i \theta(Q_{\omega})} \psi_{\omega} 
- \langle e^{i \theta(Q_{\omega})}Q_{\omega}, \psi_{\omega} \rangle 
\theta^{\prime}(Q_{\omega}) 
= -i \psi_{\omega} - \langle Q_{\omega}, \psi_{\omega} \rangle 
\theta^{\prime}(Q_{\omega})
= - i \psi_{\omega}, 
\]
where we have used the fact that $\theta(Q_{\omega}) = 0$ and 
$\langle Q_{\omega}, \psi_{\omega} \rangle = 0$. 
It follows that $i A^{\prime}(Q_{\omega}) = \psi_{\omega}$. 

{\rm (iv)}.
From {\rm (i)}, we obtain 
\[
0 = \frac{d}{d s} A(e^{is} u)\biggl|_{s=0} = 
\langle A^{\prime}(u), i u \rangle.  
\]
This yields that $\langle i A^{\prime}(u), u \rangle = 0$. 
\end{proof}
Next, we consider the following differential equation: 
\begin{equation} \label{eq3-4}
\frac{d R}{d \lambda} = -i A^{\prime}(R), \qquad 
R(0) = v \in U_{\varepsilon}(Q_{\omega}). 
\end{equation}
We note that thanks to Lemma \ref{thm3-2}, 
there exist $\lambda_{0} > 0$ and 
a unique solution $R(\lambda, v)$ 
to the equation \eqref{eq3-4} for $|\lambda| < \lambda_{0}$. 
We see that 
\begin{align}
& e^{is}R(\lambda, v) = R(\lambda, e^{is} v), 
\label{eq3-5} \\
& \frac{d}{d \lambda} \mathcal{M}(R(\lambda, v)) 
= \langle R(\lambda, v), - i A^{\prime}(R(\lambda, v)) \rangle 
= 0, 
\label{eq3-6} \\
& \frac{d}{d \lambda} R(\lambda, Q_{\omega})|_{\lambda =0} 
= - i A^{\prime}(Q_{\omega}) = \psi_{\omega}.  
\label{eq3-7}
\end{align}
Indeed, we find that 
\[
e^{i s}R(0, v) = e^{is} v = R(0, e^{is} v)
\]
and it follows from \eqref{eq3-3} and $\theta^{\prime}(e^{is}u) 
= e^{i s} \theta^{\prime}(u)$ that 
\[
\frac{d}{d \lambda} (e^{i s}R(\lambda, v)) 
= - i e^{is} A^{\prime}(R(\lambda, v)) = - iA^{\prime} 
(e^{is} R(\lambda, v)). 
\]
From the uniqueness of the solution, we have \eqref{eq3-5}. 
\eqref{eq3-6} immediately follows from Lemma \ref{thm3-2} (iv). 
By Lemma \ref{thm3-2} (iii), we see that \eqref{eq3-7} holds.

We put 
\[
\mathcal{P}(v) = 
\langle \mathcal{S}_{\omega}^{\prime}(v), - i A^{\prime}(v) \rangle. 
\]
Then, we can find that 
$\mathcal{P}(e^{i \theta}v) = \mathcal{P}(v)$ for any $\theta \in \mathbb{R}$. 
We shall show the following: 
\begin{lemma} \label{thm3-3}
There exists a $C^{1}$ functional 
\[
\Lambda 
: \left\{v \in U_{\varepsilon}(Q_{\omega}) \mid 
\mathcal{M}(v) = \mathcal{M}(Q_{\omega}) \right\} \to \mathbb{R}
\]
such that 
\begin{equation} \label{eq3-8}
\mathcal{S}_{\omega}(Q_{\omega}) < 
\mathcal{S}_{\omega}(v) + \mathcal{P}(v) \Lambda(v)
\end{equation}
for all $v \in U_{\varepsilon}(Q_{\omega})$ with 
$\mathcal{M}(v) = \mathcal{M}(Q_{\omega})$ 
and $v \not\in \left\{e^{is}Q_{\omega} \mid s \in \mathbb{R} 
\right\}$. 
\end{lemma}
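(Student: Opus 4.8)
The plan is to use the mass-invariant flow $R(\lambda,v)$ from \eqref{eq3-4} to slide an arbitrary $v$ onto the Nehari manifold $\{\mathcal{N}_{\omega}=0\}$, where the ground-state characterization forces $\mathcal{S}_{\omega}\ge m_{\omega}=\mathcal{S}_{\omega}(Q_{\omega})$, and then to account for the cost of this slide by a first-order term whose error is controlled by the strict concavity of $\lambda\mapsto\mathcal{S}_{\omega}(R(\lambda,v))$ supplied by Lemma \ref{thm3-1-1}. Concretely, I define $\Lambda(v)$ as the scale at which the flow reaches the Nehari manifold.

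First I would construct $\Lambda$ by the implicit function theorem. Set $F(\lambda,v):=\mathcal{N}_{\omega}(R(\lambda,v))$. Since $\mathcal{S}_{\omega}\in C^{2}$ and, by Lemma \ref{thm3-2}, $-iA'$ is a $C^{1}$ vector field, $F$ is $C^{1}$, and $F(0,Q_{\omega})=\mathcal{N}_{\omega}(Q_{\omega})=0$. Using \eqref{eq3-7} together with the explicit scaling identity
\[
\mathcal{N}_{\omega}(T_{\lambda}Q_{\omega})=\lambda(\|\partial_{x}Q_{\omega}\|_{L^{2}}^{2}+\||D_{y}|^{\frac12}Q_{\omega}\|_{L^{2}}^{2})+\omega\|Q_{\omega}\|_{L^{2}}^{2}-\lambda^{\frac34(p-1)}\|Q_{\omega}\|_{L^{p+1}}^{p+1}
\]
and the relation $\mathcal{N}_{\omega}(Q_{\omega})=0$, one finds
\[
\partial_{\lambda}F(0,Q_{\omega})=\langle\mathcal{N}_{\omega}'(Q_{\omega}),\psi_{\omega}\rangle=(1-\tfrac34(p-1))(\|\partial_{x}Q_{\omega}\|_{L^{2}}^{2}+\||D_{y}|^{\frac12}Q_{\omega}\|_{L^{2}}^{2})-\tfrac34(p-1)\,\omega\|Q_{\omega}\|_{L^{2}}^{2},
\]
which is strictly negative for $7/3<p<5$. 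Hence there exist $\varepsilon>0$ and a $C^{1}$ map $\Lambda$ with $\Lambda(Q_{\omega})=0$ and $\mathcal{N}_{\omega}(R(\Lambda(v),v))=0$ for $v\in U_{\varepsilon}(Q_{\omega})$; the phase covariance \eqref{eq3-5} and the invariance $\mathcal{P}(e^{i\theta}v)=\mathcal{P}(v)$ give $\Lambda(e^{i\theta}v)=\Lambda(v)$, so $\Lambda$ is well defined on the whole tube about the orbit.

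With $w:=R(\Lambda(v),v)$, the lower bound is immediate: $w$ is nonzero (for $\varepsilon$ small it lies near $Q_{\omega}$) and satisfies $\mathcal{N}_{\omega}(w)=0$, so by definition of $m_{\omega}$ in Theorem \ref{existence-groundstate}, $\mathcal{S}_{\omega}(Q_{\omega})=m_{\omega}\le\mathcal{S}_{\omega}(R(\Lambda(v),v))$. For the upper bound set $g(\lambda):=\mathcal{S}_{\omega}(R(\lambda,v))$, so that $g(0)=\mathcal{S}_{\omega}(v)$ and, since $\partial_{\lambda}R=-iA'(R)$, $g'(0)=\langle\mathcal{S}_{\omega}'(v),-iA'(v)\rangle=\mathcal{P}(v)$. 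Taylor expansion gives $g(\Lambda(v))=\mathcal{S}_{\omega}(v)+\mathcal{P}(v)\Lambda(v)+\tfrac12 g''(\xi)\Lambda(v)^{2}$ for some $\xi$ between $0$ and $\Lambda(v)$. Since $g''(0)=\langle\mathcal{S}_{\omega}''(Q_{\omega})\psi_{\omega},\psi_{\omega}\rangle<0$ by Lemma \ref{thm3-1-1}, continuity of $g''$ and phase covariance let me shrink $\varepsilon$ so that $g''<0$ on the segment joining $0$ to $\Lambda(v)$ for every $v\in U_{\varepsilon}(Q_{\omega})$; thus the remainder is $\le 0$ and $\mathcal{S}_{\omega}(R(\Lambda(v),v))\le\mathcal{S}_{\omega}(v)+\mathcal{P}(v)\Lambda(v)$, strictly whenever $\Lambda(v)\ne0$. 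Chaining the two bounds yields \eqref{eq3-8}.

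The main obstacle is the strictness in the degenerate case $\Lambda(v)=0$. By construction $\Lambda(v)=0$ forces $v$ itself onto the Nehari manifold, so the upper bound collapses to an equality and the whole claim reduces to showing $\mathcal{S}_{\omega}(v)>m_{\omega}$, i.e.\ that $v$ is \emph{not} a minimizer of $\mathcal{S}_{\omega}$ on $\{\mathcal{N}_{\omega}=0\}$. This is delicate because the minimizers form the full ground-state orbit (phases and translates), whereas the hypothesis only removes the phases $\{e^{is}Q_{\omega}\}$; reconciling this—by arguing that the only minimizers sitting inside $U_{\varepsilon}(Q_{\omega})$ are the excluded phase rotations, so that every admissible $v$ with $\Lambda(v)=0$ strictly exceeds $m_{\omega}$—is the step requiring genuine care. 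Securing the uniform negativity of $g''$ over the entire tubular neighborhood (via the covariance of $R$, $\mathcal{P}$ and $\Lambda$) is the remaining technical point; the construction of $\Lambda$ and the two one-sided estimates are otherwise routine consequences of the implicit function theorem, the variational definition of $m_{\omega}$, and Lemma \ref{thm3-1-1}.
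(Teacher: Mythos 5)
Your argument is essentially the paper's: define $\Lambda(v)$ by the implicit function theorem as the flow time at which $R(\lambda,v)$ reaches the Nehari manifold (using $\langle\mathcal{N}_{\omega}'(Q_{\omega}),\psi_{\omega}\rangle\neq 0$, which you verify correctly for $7/3<p<5$), bound $\mathcal{S}_{\omega}(R(\Lambda(v),v))$ from below by $m_{\omega}=\mathcal{S}_{\omega}(Q_{\omega})$ via the variational characterization, and from above by $\mathcal{S}_{\omega}(v)+\mathcal{P}(v)\Lambda(v)$ via a second-order Taylor expansion whose remainder is controlled by $\langle\mathcal{S}_{\omega}''(Q_{\omega})\psi_{\omega},\psi_{\omega}\rangle<0$ from Lemma \ref{thm3-1-1}; gauge covariance of $R$, $\mathcal{P}$ and $\Lambda$ then extends the estimate from a ball around $Q_{\omega}$ to the whole tube. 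Every one of these steps appears in the paper's proof in the same order.

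The one place you diverge is that you flag, rather than resolve, the strictness of \eqref{eq3-8} when $\Lambda(v)=0$, and you are right that this is exactly where the argument is thin. Your diagnosis in fact applies to the paper's own proof: \eqref{eq3-9} is asserted strictly for all $|\lambda|<\lambda_{0}$, but at $\lambda=0$ both sides equal $\mathcal{S}_{\omega}(v)$, so strictness can only come from $\Lambda(v)\neq 0$ or from $v$ itself failing to be a minimizer on $\{\mathcal{N}_{\omega}=0\}$. As you observe, the hypothesis only removes the phase orbit $\{e^{is}Q_{\omega}\}$, while small spatial translates $Q_{\omega}(\cdot+\tau_{1},\cdot+\tau_{2})$ are admissible: they have the right mass, lie in $U_{\varepsilon}(Q_{\omega})$, satisfy $\mathcal{S}_{\omega}(v)=\mathcal{S}_{\omega}(Q_{\omega})$ and $\mathcal{P}(v)=0$ (being critical points of $\mathcal{S}_{\omega}$), so for them \eqref{eq3-8} holds only with equality; the strict statement really requires excluding the full orbit \eqref{orbit} rather than just the phases. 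Neither you nor the paper closes this, but the non-strict version --- which your Taylor argument does prove --- is all that is used downstream: in Lemma \ref{thm3-4} one applies \eqref{eq3-8} only to $v=\psi(t)$ with $\mathcal{S}_{\omega}(\psi(t))<\mathcal{S}_{\omega}(Q_{\omega})$, where $0<\mathcal{S}_{\omega}(Q_{\omega})-\mathcal{S}_{\omega}(v)\leq\mathcal{P}(v)\Lambda(v)$ already follows from the non-strict inequality. So your proposal is as complete as the paper's argument and more candid about the borderline case.
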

\begin{proof}
By the Taylor expansion, we have 
\[
\mathcal{S}_{\omega}(R(\lambda, v)) 
= \mathcal{S}_{\omega}(v) + \mathcal{P}(v) \lambda 
+ \frac{1}{2} \mathcal{R}(R(\tau \lambda, v)) \lambda^{2}, 
\]
where $\tau \in (0, 1)$ and 
\[
\mathcal{R}(v) = \langle \mathcal{S}_{\omega}^{\prime\prime}(v)i A^{\prime}(v), 
i A^{\prime}(v) \rangle 
- \langle \mathcal{S}_{\omega}^{\prime}(v), 
i A^{\prime\prime}(i A^{\prime}(v)) \rangle.  
\]
By Lemma \ref{thm3-1-1} and Lemma \ref{thm3-2} {\rm (iii)}, we have 
\[
\mathcal{R}(Q_{\omega}) = 
\langle \mathcal{S}_{\omega}^{\prime\prime}(Q_{\omega}) \psi_{\omega}, 
\psi_{\omega} \rangle < 0. 
\]
Thus, for sufficiently small $\varepsilon_{0}>0$ and $\lambda_{0} > 0$,  
we obtain 
\begin{equation} \label{eq3-9}
\mathcal{S}_{\omega}(R(\lambda, v)) < \mathcal{S}_{\omega}(v) 
+ P(v) \lambda
\end{equation}
for all $|\lambda| < \lambda_{0}$ and $v \in B(Q_{\omega}, \varepsilon_{0})$. 
On the other hand, note that 
$\mathcal{N}_{\omega}(R(\lambda, v))|_{(\lambda, v)= (0, Q_{\omega})} = 0$ 
and since $i A^{\prime}(Q_{\omega}) = -\psi_{\omega}$, we have 
\begin{equation*}
\begin{split}
\frac{\partial}{\partial \lambda} \mathcal{N}_{\omega}(R(\lambda, v))
|_{(\lambda, v) = (0, Q_{\omega})} 
= \langle \mathcal{N}_{\omega}^{\prime}(Q_{\omega}), 
- i A^{\prime}(Q_{\omega}) \rangle 
& = \langle \mathcal{N}_{\omega}^{\prime}(Q_{\omega}), 
\psi_{\omega} \rangle \\
& = - \frac{3(p-1)}{4(p+1)} \|Q_{\omega}\|_{L^{p+1}}^{p+1} \neq 0.  
\end{split}
\end{equation*}
Thus, by the implicit function theorem, there exist 
sufficiently small $\varepsilon >0$ 
and a map $\Lambda(v): B(Q_{\omega}, \varepsilon) \to \mathbb{R}$ 
such that $\mathcal{N}_{\omega}(R(\Lambda(v), v)) = 0$. 
From the variational characterization, we have 
\[
\mathcal{S}_{\omega}(Q_{\omega}) \leq 
\mathcal{S}_{\omega}(R(\Lambda(v), v)). 
\]
This together with \eqref{eq3-8} yields that 
\begin{equation} \label{eq3-10}
\mathcal{S}_{\omega}(Q_{\omega}) <  
\mathcal{S}_{\omega}(v) + \mathcal{P}(v) \Lambda(v)
\end{equation}
for all $v \in B(Q_{\omega}, \varepsilon)$. 
By the gauge invariance of the functional 
$\mathcal{S}_{\omega}$ and $\mathcal{P}$, we see that 
\eqref{eq3-10} holds for all 
$v \in U_{\varepsilon}(Q_{\omega})$. 
\end{proof}
We put 
\begin{equation} \label{exit-time}
T_{\varepsilon}(\psi_{0}) 
= \sup\left\{T>0 \mid \psi(t) \in U_{\varepsilon}(Q_{\omega})\;  
\mbox{for all $t \in [0, T)$}\right\}, 
\end{equation}
where $\psi$ is a solution to \eqref{WS} with 
$\psi|_{t=0} = \psi_{0} \in U_{\varepsilon}(Q_{\omega})$. 
\begin{lemma} \label{thm3-4}
We set 
\begin{align*}
& S_{+} := 
\left\{ 
u \in U_{\varepsilon}(Q_{\omega}) \mid 
\mathcal{S}_{\omega}(u) < \mathcal{S}_{\omega}(Q_{\omega}), \;  
\mathcal{P}(u) > 0
\right\}, \\
& S_{-} := 
\left\{ 
u \in U_{\varepsilon}(Q_{\omega}) \mid 
\mathcal{S}_{\omega}(u) < \mathcal{S}_{\omega}(Q_{\omega}), \; 
\mathcal{P}(u) < 0
\right\}. 
\end{align*}
Then, the sets $S_{\pm}$ are non-empty invariant under the flow 
of \eqref{WS}. 
Moreover, for any $\psi_{0} \in S_{+} \cup S_{-}$, 
there exists $\delta_{0} = \delta_{0}(\psi_{0}) >0$ such that 
$|\mathcal{P}(\psi(t))| \geq \delta_{0}$ 
for all $t \in [0, T_{\varepsilon}(\psi_{0}))$. 
\end{lemma}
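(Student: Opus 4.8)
My plan rests on two facts already available: the action splits as $\mathcal{S}_{\omega} = \mathcal{H} + \omega \mathcal{M}$, so by the conservation laws \eqref{conserv} the number $\mathcal{S}_{\omega}(\psi(t))$ is \emph{constant} along the flow of \eqref{WS}; and the variational inequality \eqref{eq3-8} of Lemma \ref{thm3-3} is in force on all of $U_{\varepsilon}(Q_{\omega})$. I would first establish non-emptiness, then the uniform lower bound $|\mathcal{P}| \geq \delta_{0}$, and finally read off invariance as a corollary of that bound.

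For non-emptiness I would probe $S_{\pm}$ along the mass-preserving flow $R(\lambda, Q_{\omega})$ of \eqref{eq3-4}. Writing $g(\lambda) := \mathcal{S}_{\omega}(R(\lambda, Q_{\omega}))$, the chain rule and \eqref{eq3-4} give $g'(\lambda) = \langle \mathcal{S}_{\omega}'(R(\lambda,Q_{\omega})), -iA'(R(\lambda,Q_{\omega}))\rangle = \mathcal{P}(R(\lambda, Q_{\omega}))$, so $g'(0) = \mathcal{P}(Q_{\omega}) = 0$ since $\mathcal{S}_{\omega}'(Q_{\omega}) = 0$. Differentiating again, the Taylor remainder computed in the proof of Lemma \ref{thm3-3} together with \eqref{eq3-7} and Lemma \ref{thm3-1-1} yields $g''(0) = \mathcal{R}(Q_{\omega}) = \langle \mathcal{S}_{\omega}''(Q_{\omega})\psi_{\omega}, \psi_{\omega}\rangle < 0$. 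Hence $\lambda = 0$ is a strict local maximum of $g$ across which $g'=\mathcal{P}(R(\cdot,Q_{\omega}))$ changes sign: for small $\lambda>0$ one gets $\mathcal{S}_{\omega}(R(\lambda,Q_{\omega})) < \mathcal{S}_{\omega}(Q_{\omega})$ with $\mathcal{P}<0$, and for small $\lambda<0$ the sign of $\mathcal{P}$ reverses. Since $R(\lambda, Q_{\omega}) \to Q_{\omega}$ in $X$, both points lie in $U_{\varepsilon}(Q_{\omega})$, placing them in $S_{-}$ and $S_{+}$ respectively.

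For the uniform bound, I would fix $\psi_{0} \in S_{+} \cup S_{-}$ and $t \in [0, T_{\varepsilon}(\psi_{0}))$, so $\psi(t) \in U_{\varepsilon}(Q_{\omega})$ by \eqref{exit-time}. Conservation gives $\mathcal{S}_{\omega}(\psi(t)) = \mathcal{S}_{\omega}(\psi_{0}) < \mathcal{S}_{\omega}(Q_{\omega})$; in particular $\psi(t)$ can never land on the orbit $\{e^{is}Q_{\omega}\}$, where $\mathcal{S}_{\omega}$ takes the value $\mathcal{S}_{\omega}(Q_{\omega})$, so the hypotheses of \eqref{eq3-8} are met and
\[
\mathcal{P}(\psi(t))\,\Lambda(\psi(t)) > \mathcal{S}_{\omega}(Q_{\omega}) - \mathcal{S}_{\omega}(\psi(t)) = \mathcal{S}_{\omega}(Q_{\omega}) - \mathcal{S}_{\omega}(\psi_{0}) =: d > 0.
\]
Because $\Lambda$ is the $C^{1}$ map of Lemma \ref{thm3-3} with $\Lambda(Q_{\omega}) = 0$, its $C^{1}$ regularity and gauge invariance make it bounded, say $|\Lambda| \leq M$, on $U_{\varepsilon}(Q_{\omega})$ after shrinking $\varepsilon$; hence $|\mathcal{P}(\psi(t))| \geq d/M =: \delta_{0} > 0$, uniformly on $[0, T_{\varepsilon}(\psi_{0}))$.

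Invariance is then immediate: $t \mapsto \mathcal{P}(\psi(t))$ is continuous (the flow is continuous in $X$ and $\mathcal{P}$ is continuous) and, by the previous step, never vanishes on $[0, T_{\varepsilon}(\psi_{0}))$, so it keeps its sign at $t = 0$; coupled with the conserved strict inequality $\mathcal{S}_{\omega}(\psi(t)) < \mathcal{S}_{\omega}(Q_{\omega})$, this keeps $\psi(t)$ in $S_{+}$ (resp. $S_{-}$) as long as it stays in $U_{\varepsilon}(Q_{\omega})$. The delicate point I expect is extracting the \emph{uniform} constant $\delta_{0}$: it requires pairing the fixed energy gap $d>0$ furnished by conservation with a genuinely uniform control of $\Lambda$ over the whole tube $U_{\varepsilon}(Q_{\omega})$, not merely near $Q_{\omega}$, so the main work is verifying the boundedness of $\Lambda$ from its $C^{1}$ regularity, the normalization $\Lambda(Q_{\omega}) = 0$, and gauge invariance.
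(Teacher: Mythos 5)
Your proof is correct and follows essentially the same route as the paper's: probing along the flow $R(\lambda, Q_{\omega})$ together with $\mathcal{R}(Q_{\omega})<0$ for non-emptiness, and combining conservation of $\mathcal{S}_{\omega}$ with the inequality \eqref{eq3-8} and the boundedness of $\Lambda$ on the tube to get both the sign preservation and the uniform lower bound on $|\mathcal{P}(\psi(t))|$. The only cosmetic differences are that you read off the sign of $\mathcal{P}(R(\lambda,Q_{\omega}))$ from $g''(0)=\mathcal{R}(Q_{\omega})<0$ directly rather than by applying \eqref{eq3-9} twice with $\nu=-\lambda$ as the paper does, and that you establish the uniform bound first and deduce invariance from it rather than the reverse.
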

\begin{proof}
First, we shall show that $S_{\pm}$ is non-empty. 
Since $\mathcal{P}(Q_{\omega}) = 0$, 
we have, by \eqref{eq3-9}, that 
\begin{equation} \label{eq3-12}
\mathcal{S}_{\omega}(R(\lambda, Q_{\omega})) 
< \mathcal{S}_{\omega}(Q_{\omega}). 
\end{equation}
Moreover, applying \eqref{eq3-9} for $v = R(\nu, Q_{\omega})$, we obtain
\[
\mathcal{S}_{\omega}(R(\lambda + \nu, Q_{\omega})) 
= \mathcal{S}_{\omega}(R(\lambda, R(\nu, Q_{\omega}))) 
\leq \mathcal{S}_{\omega}(R(\lambda, Q_{\omega})) 
+ \mathcal{P}(R(\lambda, Q_{\omega})) \nu. 
\]
Taking $\nu = -\lambda$, we have 
\[
\mathcal{S}_{\omega}(Q_{\omega}) \leq 
\mathcal{S}_{\omega}(R(\lambda, Q_{\omega})) 
- P(R(\lambda, Q_{\omega})) \lambda. 
\]
It follows from \eqref{eq3-12} that 
\[
0 < \mathcal{S}_{\omega}(Q_{\omega}) - 
\mathcal{S}_{\omega}(R(\lambda, Q_{\omega})) 
\leq - \mathcal{P}(R(\lambda, Q_{\omega})) \lambda. 
\]
From this, we see that 
$\mathcal{P}(R(\lambda, Q_{\omega})) >0$ for $\lambda < 0$, 
which implies that $R(\lambda, Q_{\omega}) \in S_{+}$. 
By a similar argument, we find that $R(\lambda, Q_{\omega}) 
\in S_{-}$ for $\lambda >0$.  

Next, we shall show that the sets $S_{\pm}$ are invariant. 
Let $\psi_{0} \in S_{+}$. 
Then, from the conservation laws, we have 
\[
\mathcal{S}_{\omega}(\psi(t)) = \mathcal{S}_{\omega}(\psi_{0})
 < \mathcal{S}_{\omega}(Q_{\omega}).
\] 
Moreover, it follows from \eqref{eq3-8} that 
\begin{equation} \label{eq3-11}
0 < \mathcal{S}_{\omega}(Q_{\omega}) - \mathcal{S}_{\omega}(\psi(t)) 
< \mathcal{P}(\psi(t)) \Lambda (\psi(t)) \leq 
C |\mathcal{P}(\psi(t))|. 
\end{equation}
Then, by the continuity dependence of $\mathcal{P}(\psi(t))$ 
on $t$, we see that 
$P(\psi(t)) > 0$. 
Consequently, we see that the set $S_{+}$ is invariant. 
By a similar argument, we can show that the set $S_{-}$ 
is invariant. 
The last claim follows from \eqref{eq3-11}. 
\end{proof}
We are now in position to prove Theorem \ref{instability}.
\begin{proof}[Proof of Theorem \ref{instability}]
Let $\psi_{0} \in S_{+} \cup S_{-}$. 
Then, we shall show that $T_{\varepsilon}(\psi_{0}) < \infty$, 
where $T_{\varepsilon}(\psi_{0})$ is defined by \eqref{exit-time}.
Suppose the contrary that $T_{\varepsilon}(\psi_{0}) =\infty$. 
Then, from Lemma \ref{thm3-4}, there exists $\delta_{0} > 0$ 
such that 
\begin{equation}\label{eq3-13}
\mbox
{
$\mathcal{P}(\psi(t)) \geq \delta_{0}$ \quad or \quad  
$\mathcal{P}(\psi(t)) \leq - \delta_{0}$ \qquad for all $t \in [0, \infty)$. 
}
\end{equation}
It follows from Lemma \ref{thm3-2} {\rm (iv)} that 
\[
\frac{d}{d t}A(\psi(t)) 
= \langle A^{\prime}(\psi(t)), -i \mathcal{H}^{\prime}(\psi(t)) \rangle 
= \langle A^{\prime}(\psi(t)), -i \mathcal{S}_{\omega}^{\prime}(\psi(t)) \rangle
= \mathcal{P}(\psi(t)). 
\]
This together with \eqref{eq3-13} 
yields that $\lim_{t \to \infty}|A(\psi(t))| = \infty$. 
However, from the definition \eqref{eq3-1}, we see that 
there exists $C_{1} >0$ such that 
$|A(u)| \leq C_{1}$ for all $u \in U_{\varepsilon}(Q_{\omega})$, 
which is a contradiction. 
\end{proof}

\section{Existence of traveling wave solutions}
\label{section-travelingwave}
This section is devoted to the traveling wave solutions. 
First, we shall show the existence of the traveling wave solutions. 
Note that if $v \in \mathbb{R}$ with $|v| < 1$, 
we have 
\begin{equation} \label{travel-eq-1}
\begin{split}
\langle |D_{y}|u, u \rangle - \langle i v \partial_{y} u, u \rangle 
& \geq \int_{\mathbb{R}_{\eta}} 
\left\{\int_{\mathbb{R}_{\xi}} |\xi| |\widehat{u}(\eta, \xi)|^{2} 
- |v| |\xi|  |\widehat{u}(\eta, \xi)|^{2} d\xi\right\} d\eta \\
& = (1 - |v|)\langle |D_{y}| u, u \rangle \geq 0
\end{split}
\end{equation}
for all $u \in X$. 
Thus, if we put 
\begin{equation*}
\begin{split}
\mathcal{I}_{\omega, v}(u) 
& = \mathcal{S}_{\omega, v}(u) 
- \frac{1}{p+1} \mathcal{N}_{\omega, v}(u) \\
& = \left(\frac{1}{2} - \frac{1}{p+1}\right)
\int_{\mathbb{R}^{2}} \biggl\{ 
|\partial_{x} u(x, y)|^{2} + |D_{y}| u(x, y) \overline{u(x, y)}  
- i v \partial_{y} u(x, y) \overline{u(x, y)} \\
& \hspace{4cm} + \omega |u(x, y)|^{2} \biggl\} dxdy,
\end{split}
\end{equation*}
where $\mathcal{N}_{\omega, v}$ defined in \eqref{def:N-omega-v}. Then, we obtain $\mathcal{I}_{\omega, v}(u) \geq 0$ 
for all $u \in X$. 
First, we prepare the following: 
\begin{lemma} 
\label{L2subl-thm2-1-1}
Let $m_{\omega, v}$ be the minimization problem defined in \eqref{def:m-omega-v}. For any $\omega>0$ and $v \in \mathbb{R}$ with $|v|<1$, 
we have 
\begin{equation} \label{eq-0}
m_{\omega, v} = \inf \left\{
\mathcal{I}_{\omega, v}(u) \mid \mathcal{N}_{\omega, v}(u) \leq 0 \right\}.  
\end{equation}
\end{lemma}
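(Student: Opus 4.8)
The plan is to prove the two inequalities separately, writing $\widetilde{m}$ for the right-hand side of \eqref{eq-0} and understanding throughout that the competitor set excludes the trivial function $u=0$. The crucial structural observations are that on the nullcline $\{\mathcal{N}_{\omega, v}(u)=0\}$ one has $\mathcal{S}_{\omega, v}(u)=\mathcal{I}_{\omega, v}(u)$ by the very definition $\mathcal{I}_{\omega, v}=\mathcal{S}_{\omega, v}-\frac{1}{p+1}\mathcal{N}_{\omega, v}$, and that $\mathcal{I}_{\omega, v}$ equals, up to the positive factor $\frac{1}{2}-\frac{1}{p+1}$ (positive since $p>1$), the quadratic form $K(u):=\int_{\mathbb{R}^{2}}\{|\partial_{x}u|^{2}+|D_{y}|u\,\overline{u}-iv\,\partial_{y}u\,\overline{u}+\omega|u|^{2}\}\,dxdy$. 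By \eqref{travel-eq-1} this form is nonnegative, and in fact $K(u)\geq \|\partial_{x}u\|_{L^{2}}^{2}+\omega\|u\|_{L^{2}}^{2}>0$ for every $u\neq 0$.

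For the inequality $\widetilde{m}\leq m_{\omega, v}$ I would simply note that the constraint set $\{\mathcal{N}_{\omega, v}=0\}$ is contained in $\{\mathcal{N}_{\omega, v}\leq 0\}$, and that $\mathcal{I}_{\omega, v}(u)=\mathcal{S}_{\omega, v}(u)$ whenever $\mathcal{N}_{\omega, v}(u)=0$; taking the infimum of $\mathcal{I}_{\omega, v}$ over the larger set is therefore no larger than $\inf_{\mathcal{N}_{\omega, v}=0}\mathcal{S}_{\omega, v}=m_{\omega, v}$.

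The reverse inequality $m_{\omega, v}\leq\widetilde{m}$ is the heart of the matter and is handled by a scaling argument. Given any $u\in X\setminus\{0\}$ with $\mathcal{N}_{\omega, v}(u)\leq 0$, I would write $\mathcal{N}_{\omega, v}(u)=K(u)-\|u\|_{L^{p+1}}^{p+1}$, so the hypothesis reads $K(u)\leq\|u\|_{L^{p+1}}^{p+1}$; since $K(u)>0$ this forces $\|u\|_{L^{p+1}}^{p+1}>0$. Considering the dilation $t\mapsto tu$ with $t>0$, for which $K(tu)=t^{2}K(u)$ and $\|tu\|_{L^{p+1}}^{p+1}=t^{p+1}\|u\|_{L^{p+1}}^{p+1}$, the equation $\mathcal{N}_{\omega, v}(tu)=t^{2}K(u)-t^{p+1}\|u\|_{L^{p+1}}^{p+1}=0$ has the unique positive root $t_{*}=(K(u)/\|u\|_{L^{p+1}}^{p+1})^{1/(p-1)}$, and $K(u)\leq\|u\|_{L^{p+1}}^{p+1}$ gives $t_{*}\in(0,1]$. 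Then $t_{*}u$ is admissible for the problem defining $m_{\omega, v}$, and since $\mathcal{S}_{\omega, v}(t_{*}u)=\mathcal{I}_{\omega, v}(t_{*}u)=(\frac{1}{2}-\frac{1}{p+1})t_{*}^{2}K(u)\leq(\frac{1}{2}-\frac{1}{p+1})K(u)=\mathcal{I}_{\omega, v}(u)$ (using $t_{*}\leq 1$), I conclude $m_{\omega, v}\leq\mathcal{S}_{\omega, v}(t_{*}u)\leq\mathcal{I}_{\omega, v}(u)$. Taking the infimum over all such $u$ yields $m_{\omega, v}\leq\widetilde{m}$, and combining the two bounds proves \eqref{eq-0}.

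This argument is essentially routine, so I do not anticipate a serious obstacle; the only point requiring care is the strict positivity of $K$ on $X\setminus\{0\}$, which is exactly what \eqref{travel-eq-1} supplies. It is this positivity that guarantees the dilation root $t_{*}$ is well-defined and that the trivial function is not a genuine competitor, while the two-homogeneity of $\mathcal{I}_{\omega, v}$ is what makes scaling down below $t_{*}=1$ decrease the functional rather than increase it.
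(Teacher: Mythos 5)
Your proposal is correct and follows essentially the same route as the paper: both directions rest on the identity $\mathcal{S}_{\omega,v}=\mathcal{I}_{\omega,v}$ on the Nehari constraint, and the reverse inequality is obtained by rescaling a competitor with $\mathcal{N}_{\omega,v}(u)\leq 0$ by a factor $t_{*}\in(0,1]$ onto the constraint and using the $2$-homogeneity of $\mathcal{I}_{\omega,v}$. The only difference is cosmetic: you compute $t_{*}$ explicitly and justify its positivity via the coercivity of the quadratic form from \eqref{travel-eq-1}, whereas the paper merely asserts the existence of such a $t_{0}\in(0,1)$.
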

\begin{proof}
First, denote that we obviously have
$$ m_{\omega, v} \geq \inf \left\{
\mathcal{I}_{\omega, v}(u) \mid \mathcal{N}_{\omega, v}(u) \leq 0 \right\}.  $$
It is sufficient then to show the other side inequality. Let $u \in X$ satisfy $\mathcal{N}_{\omega, v}(u) = 0$.
Then, we obtain 
\[
\mathcal{S}_{\omega, v}(u) = \mathcal{S}_{\omega, v}(u) - \frac{1}{p+1} 
\mathcal{N}_{\omega, v}(u)
= \mathcal{I}_{\omega, v}(u). 
\]
Therefore, we have 
\begin{equation} \label{eq-1}
m_{\omega, v} = \inf\left\{\mathcal{I}_{\omega, v}(u) 
\mid \mathcal{N}_{\omega, v}(u) 
= 0 \right\}. 
\end{equation}
Suppose that $\mathcal{N}_{\omega, v}(u) < 0$. 
Then, there exists $t_{0} \in (0, 1)$ such that 
$\mathcal{N}_{\omega, v}(t_{0} u) = 0$. 
This and \eqref{eq-1} lead to 
\[
m_{\omega, v} \leq \mathcal{I}_{\omega, v}(t_{0}u) < 
\mathcal{I}_{\omega, v}(u). 
\]
Taking an infimum on $u \in X$, we obtain \eqref{eq-0}. 
This completes the proof
\end{proof}
We are now in position to prove Theorem \ref{travel-thm1}. 
\begin{proof}[Proof of Theorem \ref{travel-thm1}]
Let $\{u_{n}\} \subset X$ be a minimizing sequence 
for $m_{\omega, v}$. 
Then, for sufficiently large $n \in \mathbb{N}$, we have 
\[
1 + m_{\omega, v} \geq \mathcal{S}_{\omega, v}(u_{n}) 
- \frac{1}{p+1} \mathcal{N}_{\omega, v}(u_{n}) = \mathcal{I}_{\omega, v}(u_{n}). 
\]
Thus, we see that $\{u_{n}\}$ is bounded in $X$. 
Moreover, it follows from $\mathcal{N}_{\omega, v}(u_{n}) = 0$ and 
the Sobolev inequality that there exists a constant $C_{1} > 0$ such that 
\begin{equation*}
\begin{split}
C_{1} \|u_{n}\|_{L^{p+1}}^{2}  \leq 
\int_{\mathbb{R}^{2}} \biggl\{ & |\partial_{x} u_{n}(x, y)|^{2} + |D_{y}| u_{n}(x, y) \overline{u(x, y)}  
- i v \partial_{y} u_{n}(x, y) \overline{u(x, y)} \\
& + \omega |u_{n}(x, y)|^{2} \biggl\} dxdy  = \|u_{n}\|_{L^{p+1}}^{p+1}. 
\end{split}
\end{equation*}
Thus, we have $\|u_{n}\|_{L^{p+1}} \geq C_{1}^{1/(p-1)}$. 
It follows from Lemma \ref{L2subl-thm2-2} that 
there exist $\{(x_{n}, y_{n})\} \subset \mathbb{R}^{2}$, a sub-sequence 
of $\{u_{n}\}$ (we shall denote it by the same letter) and 
$Q_{\omega, v} \in X \setminus\{0\}$ such that 
$u_{n}(\cdot + x_{n}, \cdot + y_{n})$ converges to $Q_{\omega, v}$ 
weakly in $X$ as $n$ tends to infinity. 
We put $v_{n}(\cdot, \cdot) = u_{n}(\cdot + x_{n}, \cdot + y_{n})$. 
Then, by Lemma \ref{16/03/25/01:03}, we have 
\begin{align}
& \mathcal{I}_{\omega, v}(v_{n}) - \mathcal{I}_{\omega, v}(v_{n} - Q_{\omega, v}) 
- \mathcal{I}_{\omega, v}(Q_{\omega, v}) \to 0 \qquad 
\mbox{as $n \to \infty$}, \label{eq4-1} \\
& \mathcal{N}_{\omega, v}(v_{n}) - \mathcal{N}_{\omega, v}(v_{n} - Q_{\omega, v}) 
- \mathcal{N}_{\omega, v}(Q_{\omega, v}) \to 0 \qquad 
\mbox{as $n \to \infty$}. \label{eq4-2}  
\end{align}  
Suppose that $\mathcal{N}_{\omega, v}(Q_{\omega, v}) >0$. 
Then, from \eqref{eq4-2} and $\mathcal{N}_{\omega, v}(v_{n}) = 
\mathcal{N}_{\omega, v}(u_{n}) = 0$, we obtain 
$\mathcal{N}_{\omega, v}(v_{n} - Q_{\omega, v}) < 0$ for sufficiently large 
$n \in \mathbb{N}$. 
For such $n \in \mathbb{N}$, there exists $t_{n} \in  (0, 1)$ such that 
$\mathcal{N}_{\omega, v}(t_{n}(v_{n} - Q_{\omega, v})) = 0$. 
From the definition of $m_{\omega, v}$, we see that 
\[
m_{\omega, v} \leq \mathcal{I}_{\omega, v}(t_{n}(v_{n} - Q_{\omega, v})) 
< \mathcal{I}_{\omega, v} (v_{n} - Q_{\omega, v}). 
\]
This yields that $m_{\omega, v} \leq \liminf_{n \to \infty}
\mathcal{I}_{\omega, v}(v_{n} - Q_{\omega, v})$. 
This together with $Q_{\omega, v} \neq 0$, \eqref{eq4-1} and 
$\lim_{n \to \infty}\mathcal{I}_{\omega, v}(v_{n}) = m_{\omega, v}$ 
yields that 
\[
0 < \mathcal{I}_{\omega, v}(Q_{\omega, v}) \leq \limsup_{n \to \infty}
\left\{- I_{\omega, v}(v_{n}) + \mathcal{I}_{\omega, v}(v_{n} - Q_{\omega, v}) 
+ \mathcal{I}_{\omega, v}(Q_{\omega, v}) \right\} = 0, 
\]
which is a contradiction. 
Therefore, we see that $\mathcal{N}_{\omega, v}(Q_{\omega, v}) \leq 0$. 
By Lemma \ref{L2subl-thm2-1}, we have $m_{\omega, v} 
\leq \mathcal{I}_{\omega, v}(Q_{\omega, v})$. 
It follows from the weak lower semi-continuity that 
\[
m_{\omega, v} \leq \mathcal{I}_{\omega, v}(Q_{\omega, v}) \leq 
\liminf_{n \to \infty}\mathcal{I}_{\omega, v}(v_{n}) = m_{\omega, v}. 
\]
Therefore, we have $\mathcal{I}_{\omega, v}(Q_{\omega, v}) = m_{\omega, v}$. 
Then, from \eqref{eq4-1} and 
$\lim_{n \to \infty}\mathcal{I}_{\omega, v}(v_{n}) = m_{\omega, v}$, 
we have $\lim_{n \to \infty}\mathcal{I}_{\omega, v}(v_{n} - Q_{\omega, v}) = 0$, 
which implies that $v_{n}$ converges to $Q_{\omega, v}$ 
strongly in $X$ as $n$ tends to infinity. 
Therefore, we have 
\[
\mathcal{N}_{\omega, v}(Q_{\omega, v}) = 
\lim_{n \to \infty} \mathcal{N}_{\omega, v}(v_{n}) 
= \lim_{n \to \infty} \mathcal{N}_{\omega, v}(u_{n}) = 0,  
\]
which yields that $Q_{\omega, v}$ is a minimizer for $m_{\omega, v}$. 
Thus, there exists a Lagrange multiplier $\lambda \in \mathbb{R}$ 
such that 
$\mathcal{S}_{\omega, v}^{\prime}(Q_{\omega, v}) 
= \lambda \mathcal{N}_{\omega, v}^{\prime}(Q_{\omega, v})$. 
This yields that 
\[
0 = \mathcal{N}_{\omega, v}(Q_{\omega, v}) 
= \langle \mathcal{S}_{\omega, v}^{\prime}(Q_{\omega, v}), 
Q_{\omega, v} \rangle
= \lambda \langle \mathcal{N}_{\omega, v}^{\prime}
(Q_{\omega, v}), Q_{\omega, v} \rangle
= - \lambda (p-1)\|Q_{\omega, v}\|_{L^{p+1}}^{p+1}. 
\]
Since $Q_{\omega, v} \neq 0$, we have $\lambda = 0$. 
This yields that $S_{\omega, v}^{\prime}(Q_{\omega, v}) = 0$, 
that is, $Q_{\omega, v}$ is a solution to \eqref{spv}. 
Moreover, we see that for any non-trivial solution $u \in X$ to \eqref{spv}, 
$\mathcal{N}_{\omega, v}(u) = 0$. 
It follows from the definition of $m_{\omega, v}$ 
that $\mathcal{S}_{\omega, v}(Q_{\omega, v}) 
\leq \mathcal{S}_{\omega, v}(u)$. 
Thus, we find that $Q_{\omega, v}$ is a ground state to \eqref{spv}. 
\end{proof}
Next, we shall give a proof of Theorem \ref{travel-thm12}. 
It suffices to show the case where $v$ goes to $1$ because 
the other case can be proved similarly. 
\begin{proof}[Proof of Theorem \ref{travel-thm12}]
Let $\varphi \in X \setminus \{0\}$ satisfy 
$\mathop{\mathrm{supp}} \widehat{\varphi} \subset [0, \infty)$. 
This yields that 
\[
(|D_{y}| + i v \partial_{y}) \varphi = (1 - v)|D_{y}|\varphi
\] 
for $v > 0$. 
For $\lambda >0$, we set $\varphi_{\lambda}(x, y) = 
\lambda \varphi(x, \lambda^{\alpha}y)\; (\alpha > 2)$. 
Then, we have 
\begin{equation*}
\begin{split}
\mathcal{N}_{\omega, v}(\varphi_{\lambda})
& = \lambda^{2 - \alpha} \|\partial_{x} \varphi\|_{L^{2}}^{2} 
+ \lambda^{2} (1 - v)\langle |D_{y}| \varphi, \varphi \rangle 
+ \lambda^{2 - \alpha}\|\varphi\|_{L^{2}}^{2} - 
\lambda^{p+1 - \alpha} \|\varphi\|_{L^{p+1}}^{p+1} \\
& = \lambda^{2 - \alpha} 
\left( \|\partial_{x} \varphi\|_{L^{2}}^{2} + \lambda^{\alpha}(1- v)
\langle |D_{y}| \varphi, \varphi \rangle + 
\omega \|\varphi\|_{L^{2}}^{2} - \lambda^{p-1}\|\varphi\|_{L^{p+1}}^{p+1}
\right). 
\end{split}
\end{equation*}
We take $\lambda >0$ sufficiently large so that 
\[
\| \partial_{x} \varphi\|_{L^{2}}^{2} + 
\omega \|\varphi\|_{L^{2}}^{2} - \lambda^{p-1}\|\varphi\|_{L^{p+1}}^{p+1}
< - \|\varphi\|_{L^{p+1}}^{p+1}. 
\]
For $\lambda>0$, we set $v = 1- \lambda^{-\alpha}$ so that 
\[
\lambda^{2} (1 - v)\langle |D_{y}| \varphi, \varphi \rangle 
= \lambda^{2-\alpha}\langle |D_{y}| \varphi, \varphi \rangle 
< \frac{\|\varphi\|_{L^{p+1}}^{p+1}}{2} 
\]
for sufficiently large $\lambda>0$
This yields that 
\[
\mathcal{N}_{\omega, v}(\varphi_{\lambda}) < - \frac{\lambda^{2-\alpha}}{2}
\|\varphi\|_{L^{p+1}}^{p+1} < 0.  
\]
By Lemma \ref{L2subl-thm2-1}, we have 
\[
m_{\omega, v} \leq \mathcal{I}_{\omega, v}(\varphi_{\lambda}) 
= \left(\frac{1}{2} - \frac{1}{p+1}\right)
\left\{ 
\lambda^{2-\alpha}\|\partial_{x} \varphi\|_{L^{2}}^{2} 
+ \lambda^{2} (1 - v)\langle |D_{y}| \varphi, \varphi \rangle 
+ \lambda^{2 - \alpha} \|\varphi\|_{L^{2}}^{2}
\right\}. 
\]
Since $\alpha>2$, we see that 
\[
\lambda^{2-\alpha}\|\partial_{x} \varphi\|_{L^{2}}^{2}, \quad 
\lambda^{2 - \alpha} \|\varphi\|_{L^{2}}^{2} \to 0 \qquad 
\mbox{as $\lambda \to \infty$}. 
\]
and 
\[
\lambda^{2} (1 - v)\langle |D_{y}| \varphi, \varphi \rangle
= \lambda^{2-\alpha} \langle |D_{y}| \varphi, \varphi \rangle
\to 0 \qquad \mbox{as $\lambda \to \infty$}. 
\]
Therefore, we have $\lim_{v \to 1} m_{\omega, v} = 0$. 
Since 
\[
m_{\omega, v} = \mathcal{I}_{\omega, v}(Q_{\omega, v})
\geq \left(\frac{1}{2} - \frac{1}{p+1}\right)
\left\{\|\partial_{x} Q_{\omega, v}\|_{L^{2}}^{2} + 
\omega \|Q_{\omega, v}\|_{L^{2}}^{2} \right\}, 
\]
we deduce that 
$\|\partial_{x}Q_{\omega, v}\|_{L^{2}}, \|Q_{\omega, v}\|_{L^{2}} \to 0$ 
as $v \to 1$. 
This completes the proof. 
\end{proof}

\section{Cauchy problem}
\label{section-Cauchy-Problem}
In this section, we prove Theorem \ref{Cauchy}. To do so, we need several preparations. From the result of Grafakos and Oh~\cite[Theorem 1]{Grafakos-Oh}, we see that the following inequality holds: 
\begin{lemma}\label{product-chain0}
For every $0 < s < 1$, we have
\begin{equation}\label{eq-product}
\||D_{y}|^{s} (uv)\|_{L^{2}} \lesssim 
\||D_{y}|^{s}u\|_{L^{2}} \|v\|_{L^{\infty}} + 
\|u\|_{L^{\infty}} \||D_{y}|^{s} v\|_{L^{2}}
\end{equation}
for every $u, v \in \dot{H}^{s}\cap L^{\infty}(\mathbb{R})$. 
\end{lemma}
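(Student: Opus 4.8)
\emph{Overall approach.} The inequality \eqref{eq-product} is a one-dimensional fractional Leibniz rule (a Kato--Ponce / Grafakos--Oh type estimate), and the plan is to give a self-contained proof by a Coifman--Meyer paraproduct decomposition in the $y$-variable. Let $\{\Delta_{j}\}_{j\in\mathbb{Z}}$ be a homogeneous Littlewood--Paley decomposition localizing to $|\eta|\sim 2^{j}$, let $S_{k}=\sum_{j\le k}\Delta_{j}$, and recall the square-function characterization $\||D_{y}|^{s}f\|_{L^{2}}\sim\big(\sum_{j}2^{2js}\|\Delta_{j}f\|_{L^{2}}^{2}\big)^{1/2}$. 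First I would split
\[
uv=\sum_{k}(S_{k-2}u)(\Delta_{k}v)+\sum_{k}(\Delta_{k}u)(S_{k-2}v)+\sum_{|j-k|\le 1}(\Delta_{j}u)(\Delta_{k}v)=:\pi_{1}+\pi_{2}+\pi_{3},
\]
and estimate $|D_{y}|^{s}$ of each paraproduct separately in $L^{2}$.

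\emph{The low--high and high--low terms.} For $\pi_{1}$ each summand $(S_{k-2}u)(\Delta_{k}v)$ has frequency support in $|\eta|\sim 2^{k}$, so the summands are almost orthogonal and, using $\|S_{k-2}u\|_{L^{\infty}}\lesssim\|u\|_{L^{\infty}}$ (convolution with an $L^{1}$-normalized kernel),
\[
\||D_{y}|^{s}\pi_{1}\|_{L^{2}}^{2}\lesssim\sum_{k}2^{2ks}\|(S_{k-2}u)(\Delta_{k}v)\|_{L^{2}}^{2}\le\|u\|_{L^{\infty}}^{2}\sum_{k}2^{2ks}\|\Delta_{k}v\|_{L^{2}}^{2}\sim\|u\|_{L^{\infty}}^{2}\||D_{y}|^{s}v\|_{L^{2}}^{2}.
\]
By the symmetric argument $\||D_{y}|^{s}\pi_{2}\|_{L^{2}}\lesssim\|v\|_{L^{\infty}}\||D_{y}|^{s}u\|_{L^{2}}$, so these two pieces already produce both terms on the right of \eqref{eq-product}.

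\emph{The resonant term (main obstacle).} The genuine difficulty is $\pi_{3}=\sum_{j}(\Delta_{j}u)(\tilde\Delta_{j}v)$ with $\tilde\Delta_{j}=\sum_{|k-j|\le1}\Delta_{k}$: here the two comparable frequencies may cancel, so each summand is only frequency-supported in $|\eta|\lesssim 2^{j}$ rather than in a single shell, and the almost-orthogonality used above is lost. I would instead project onto the output shell, writing $\Delta_{\ell}\pi_{3}=\sum_{j\ge\ell-C}\Delta_{\ell}\big[(\Delta_{j}u)(\tilde\Delta_{j}v)\big]$ and bounding, by H\"older and $\|\tilde\Delta_{j}v\|_{L^{\infty}}\lesssim\|v\|_{L^{\infty}}$, each term by $\|\Delta_{j}u\|_{L^{2}}\|v\|_{L^{\infty}}$. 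Setting $b_{j}=2^{js}\|\Delta_{j}u\|_{L^{2}}$ this gives
\[
\||D_{y}|^{s}\pi_{3}\|_{L^{2}}^{2}\sim\sum_{\ell}2^{2\ell s}\|\Delta_{\ell}\pi_{3}\|_{L^{2}}^{2}\lesssim\|v\|_{L^{\infty}}^{2}\sum_{\ell}2^{2\ell s}\Big(\sum_{j\ge\ell-C}2^{-js}b_{j}\Big)^{2},
\]
and the remaining sum is controlled by $\sum_{j}b_{j}^{2}=\||D_{y}|^{s}u\|_{L^{2}}^{2}$ through a weighted discrete Hardy inequality, which holds precisely because $s>0$. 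Collecting $\pi_{1},\pi_{2},\pi_{3}$ yields \eqref{eq-product}; the resonant estimate is the step I expect to be the crux.

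\emph{Remark on the range.} The restriction $0<s<1$ is the natural one: it is exactly where $|D_{y}|^{s}$ admits the single-difference singular-integral representation $|D_{y}|^{s}f(y)=c_{s}\,\mathrm{p.v.}\int_{\mathbb{R}}\frac{f(y)-f(y')}{|y-y'|^{1+s}}\,dy'$, which offers an alternative elementary route. Decomposing the difference quotient of the product gives $|D_{y}|^{s}(uv)=u\,|D_{y}|^{s}v+v\,|D_{y}|^{s}u+B(u,v)$ with the symmetric remainder $B(u,v)(y)=-c_{s}\int_{\mathbb{R}}\frac{(u(y)-u(y'))(v(y)-v(y'))}{|y-y'|^{1+s}}\,dy'$, the corresponding obstacle there being the $L^{2}$-bound on $B$. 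Either way the conclusion is the quoted estimate, and since it is only invoked for $s=\tfrac12+\varepsilon<1$ in Section~\ref{section-Cauchy-Problem}, the stated range suffices.
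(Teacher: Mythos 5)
Your proposal is correct, but note that the paper does not prove this lemma at all: it is imported verbatim as Theorem~1 of Grafakos and Oh \cite{Grafakos-Oh} (the Kato--Ponce inequality in the $L^{2}\times L^{\infty}$ configuration), so there is no internal argument to compare against. What you supply is the standard self-contained Coifman--Meyer/Bony paraproduct proof of exactly this case, and it is sound: the low--high and high--low pieces are handled correctly by almost orthogonality plus $\|S_{k}w\|_{L^{\infty}}\lesssim\|w\|_{L^{\infty}}$, and your treatment of the resonant piece --- projecting onto output shells and summing $2^{(\ell-j)s}$ over $j\ge\ell-C$ via Young's inequality in $\ell^{2}(\mathbb{Z})$, which is where $s>0$ enters --- is the genuine crux and is done correctly. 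Two small remarks. First, with the usual support conventions the gap in $S_{k-2}$ is borderline (the Fourier support of $(S_{k-2}u)(\Delta_{k}v)$ can reach down to the origin, spoiling almost orthogonality); replacing $S_{k-2}$ by $S_{k-3}$ fixes this trivially. Second, your closing remark slightly misattributes the role of the hypothesis $s<1$: the paraproduct argument you give needs only $s>0$, so your proof actually establishes more than the stated range; the restriction $0<s<1$ is natural only for the alternative difference-quotient route you sketch (which is, incidentally, the mechanism the paper does use in the neighbouring Lemma~\ref{product-chain} for the power nonlinearity). The trade-off is the usual one: the citation keeps the paper short, while your argument makes the estimate self-contained at the cost of setting up Littlewood--Paley machinery that the rest of Section~\ref{section-Cauchy-Problem} uses anyway in the proof of Proposition~\ref{strichartz}.
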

Secondly, following Tzvetkov and Visciglia~\cite[Lemma 4.1]{Tzvetkov-Visciglia}, 
we shall prove the following lemma: 
\begin{lemma} \label{product-chain}
For every $0 < s < 1$ and $1 < p< 5$, 
we obtain
\begin{equation}\label{eq-chain}
\||D_{y}|^{s} (|u|^{p-1}u)\|_{L^{2}} \lesssim 
\||D_{y}|^{s}u\|_{L^{2}} \|u\|_{L^{\infty}}^{p-1}  
\end{equation}
for every $u \in \dot{H}^{s}\cap L^{\infty}(\mathbb{R})$. 
\end{lemma}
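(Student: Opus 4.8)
The plan is to reduce \eqref{eq-chain} to a pointwise estimate on the nonlinearity $F(z):=|z|^{p-1}z$, using the Gagliardo (difference-quotient) representation of the homogeneous fractional seminorm rather than the Leibniz rule of Lemma \ref{product-chain0}. Recall that for $0<s<1$ there is a constant $C_{s}>0$ such that
\[
\||D_{y}|^{s} w\|_{L^{2}(\mathbb{R})}^{2}
= C_{s}\int_{\mathbb{R}}\int_{\mathbb{R}}
\frac{|w(y)-w(y')|^{2}}{|y-y'|^{1+2s}}\, dy\, dy'
\]
for every $w\in\dot{H}^{s}(\mathbb{R})$. Applying this identity with $w=F(u)$ turns the left-hand side of \eqref{eq-chain} into a double integral whose numerator is $|F(u(y))-F(u(y'))|^{2}$, so the whole estimate reduces to controlling the increments of $F$ evaluated along $u$.

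The key elementary step I would prove is the Lipschitz-type bound
\[
|F(a)-F(b)|\leq C(p)\,\big(\max\{|a|,|b|\}\big)^{p-1}\,|a-b|
\qquad\text{for all }a,b\in\mathbb{C}.
\]
Since $1<p<5$ forces $p-1>0$, the map $F$ is of class $C^{1}$ on $\mathbb{C}\cong\mathbb{R}^{2}$ with $|DF(z)|\leq C(p)\,|z|^{p-1}$ (and $DF(0)=0$). Writing $F(a)-F(b)=\int_{0}^{1}DF\big(b+t(a-b)\big)\cdot(a-b)\,dt$ and using $|b+t(a-b)|\leq\max\{|a|,|b|\}$ for $t\in[0,1]$ then gives the bound. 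Taking $a=u(y)$, $b=u(y')$ and estimating $\max\{|u(y)|,|u(y')|\}\leq\|u\|_{L^{\infty}}$ yields the pointwise inequality
\[
|F(u(y))-F(u(y'))|\leq C(p)\,\|u\|_{L^{\infty}}^{p-1}\,|u(y)-u(y')|.
\]

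Substituting this into the Gagliardo representation and comparing with the same representation for $u$ itself gives
\[
\||D_{y}|^{s}(|u|^{p-1}u)\|_{L^{2}}^{2}
\leq C(p)^{2}\,\|u\|_{L^{\infty}}^{2(p-1)}\,
C_{s}\int_{\mathbb{R}}\int_{\mathbb{R}}
\frac{|u(y)-u(y')|^{2}}{|y-y'|^{1+2s}}\, dy\, dy'
= C(p)^{2}\,\|u\|_{L^{\infty}}^{2(p-1)}\,\||D_{y}|^{s}u\|_{L^{2}}^{2},
\]
and taking square roots delivers \eqref{eq-chain}. The only genuinely technical point, hence the main obstacle, is the pointwise bound on $F$: the function $z\mapsto|z|^{p-1}z$ fails to be smooth at the origin when $p$ is not an odd integer, and this is precisely where the hypothesis $p>1$ is used, guaranteeing that $DF$ extends continuously by $0$ at $z=0$ so that the mean-value computation above is legitimate. (Deriving \eqref{eq-chain} instead from Lemma \ref{product-chain0} is possible but would require the additional, less transparent step of bounding $\||D_{y}|^{s}(|u|^{p-1})\|_{L^{2}}$.)
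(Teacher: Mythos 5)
Your proposal is correct and follows essentially the same route as the paper: both reduce \eqref{eq-chain} to the Gagliardo difference-quotient representation of $\||D_{y}|^{s}\cdot\|_{L^{2}}$ and then bound the increments of $z\mapsto|z|^{p-1}z$ by $C(p)\|u\|_{L^{\infty}}^{p-1}|u(y)-u(y')|$. The only difference is one of emphasis — the paper spends its effort deriving the constant $C_{*}$ in the seminorm identity and leaves the pointwise Lipschitz bound implicit, whereas you take the identity for granted and justify the Lipschitz bound carefully; both steps are needed and both versions are sound.
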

\begin{proof}
First, we claim that the following identity holds:
\begin{equation} \label{frac-identity}
\iint_{\mathbb{R}^{2}} \dfrac{|u(y+h) - u(y)|^{2}}{|h|^{1+2s}} dydh 
= C_{*} \||D_{y}|^{s} u\|_{L^{2}}^{2},  
\end{equation}
where $C_{*} > 0$ is a constant defined by \eqref{const-fra} below.

It follows from the Plancharel theorem that 
\[
\int_{\mathbb{R}} |u(y+h) - u(y)|^{2} dy = 
\int_{\mathbb{R}} |e^{-ihy}\mathcal{F}_{y}[u](\eta) 
- \mathcal{F}_{y}[u](\eta)|^{2} d\eta,  
\]
where $\mathcal{F}_{y}[u]$ is the Fourier transformation of $u$ 
with respect to the variable $y$. 
This yields that 
\begin{equation} \label{frac-identity-1}
\begin{split}
& \quad \iint_{\mathbb{R}^{2}} \dfrac{|u(y+h) - u(y)|^{2}}{|h|^{1+2s}} dydh \\
& = \int^{\infty}_{0} |\mathcal{F}[u](\eta)|^{2} \int_{\mathbb{R}} 
\frac{|e^{i \eta h} - 1|^{2}}{|h|^{1+2s}} dh d\eta
+ \int^{0}_{-\infty} |\mathcal{F}[u](\eta)|^{2} \int_{\mathbb{R}} 
\frac{|e^{i \eta h} - 1|^{2}}{|h|^{1+2s}} dh d\eta
=: I + II. 
\end{split}
\end{equation}
We first consider $I$. 
Changing the variable $r = \eta h$, 
we have 
\[
 \int_{\mathbb{R}} 
\frac{|e^{i \eta h} - 1|^{2}}{|h|^{1+2s}} dh 
= \int_{\mathbb{R}} |e^{i r} -1|^{2} \frac{|\eta|^{2s}}{|r|^{1+2s}} dr 
= C_{*} |\eta|^{2s}, 
\]
where 
\begin{equation} \label{const-fra}
C_{*} = \int_{\mathbb{R}} \frac{|e^{ir} -1|^{2}}{|r|^{1 + 2s}} dr. 
\end{equation}
Therefore, we have 
\begin{equation} \label{frac-identity2}
I = C_{*} \int^{\infty}_{0} |\eta|^{2s}|\mathcal{F}[u](\eta)|^{2} d\eta. 
\end{equation}
Next, we consider the case $\eta < 0$. 
Changing the variable  $r = - \eta h$, 
we obtain 
\[
 \int_{\mathbb{R}} 
\frac{|e^{i \eta h} - 1|^{2}}{|h|^{1+2s}} dh 
= \int_{\mathbb{R}} |e^{- i r} -1|^{2} \frac{|\eta|^{2s}}{|r|^{1+2s}} dr 
= |\eta^{2s}| \int_{\mathbb{R}} |e^{i r} -1|^{2} \frac{1}{|r|^{1+2s}} dr 
= C_{*} |\eta|^{2s}.  
\]
This implies that 
\begin{equation} \label{frac-identity3}
II = C_{*} \int^{0}_{-\infty} |\eta|^{2s}|\mathcal{F}[u](\eta)|^{2} d\eta. 
\end{equation}
By \eqref{frac-identity-1}, \eqref{frac-identity2} and 
\eqref{frac-identity3}, 
we see that \eqref{frac-identity} holds.

It follows from \eqref{frac-identity} that 
\begin{equation*}
\begin{split}
C_{*} \||D_{y}|^{s} (|u|^{p-1}u)\|_{L^{2}}^{2} 
& = \iint_{\mathbb{R}^{2}} 
\dfrac{\bigl||u|^{p-1}u(y+h) - |u|^{p-1}u(y)\bigl|^{2}}{|h|^{1+2s}} dydh \\
& \lesssim 
\|u\|_{L^{\infty}}^{2(p-1)} 
\int_{\mathbb{R}^{2}} \dfrac{|u(y+ h) - u(y)|^{2}}{|h|^{1+2s}} dydh \\
& = C_{*} \|u\|_{L^{\infty}}^{2(p-1)}\||D_{y}|^{s} u\|_{L^{2}}^{2}. 
\end{split}
\end{equation*}
This completes the proof.

\end{proof}

Next, we recall the following Strichartz estimate stated by Keel and Tao in \cite{Keel-Tao}.
\begin{lemma}[\cite{Keel-Tao}]
\label{Lem:Keel-Tao}
Suppose that for each time $t \in \mathbb{R}$, 
we have an operator $U(t) : L^2(\mathbb{R}^2) \to L^2(\mathbb{R}^2)$ which obeys the following estimates :
\begin{itemize}
\item For all $t$ and all $f \in L^2(\mathbb{R}^2)$ we have 
\begin{equation}
\label{cond1U:L2}
\|U(t)f\|_{L^2_{x,y}} \lesssim \|f\|_{L^2_{x,y}}.
\end{equation}
\item For all $t\neq s$ and all $g\in L^{1}(\mathbb{R}^2)$
\begin{equation}
\label{cond2U:L2}
\|U(s)(U(t))^{*}g\|_{L^\infty_{x,y}} \lesssim |t-s|^{-\sigma} A \|g\|_{L^1_{x,y}}.
\end{equation} 
where $\sigma, A>0$. 
\end{itemize}
Then the estimates
$$\|U(t)f\|_{L^q_t L^r_{x,y}} \lesssim A^{\frac{1}{r'}-\frac{1}{2}} \|f\|_{L^2_{x,y}},$$
and
$$\left\|\int_{t<s} U(t)(U(s))^{*} F(s) ds\right\|_{L^q_t L^r_{x,y}} \lesssim A^{\frac{1}{r'}+\frac{1}{\tilde{r}'}-1} \|F\|_{L^{\tilde{q}'}_t L^{\tilde{r}'}_{x,y}},$$
hold for all sharp $\sigma$-admissible exponent pairs $(q,r)$ and $(\tilde{q}, \tilde{r})$ i.e. they satisfy
$$ \frac{1}{q} + \frac{\sigma}{r} = \frac{\sigma}{2}.$$
\end{lemma}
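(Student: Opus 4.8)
The plan is to follow the standard $TT^{*}$ strategy, deriving both estimates from the two hypotheses \eqref{cond1U:L2} and \eqref{cond2U:L2} by interpolation and fractional integration, and reserving a separate dyadic argument only for the endpoint. First I would record the interpolated dispersive bound. Since $U(s)U(t)^{*}$ maps $L^{2}_{x,y}$ to $L^{2}_{x,y}$ with norm $\lesssim 1$ by \eqref{cond1U:L2} and maps $L^{1}_{x,y}$ to $L^{\infty}_{x,y}$ with norm $\lesssim A|t-s|^{-\sigma}$ by \eqref{cond2U:L2}, and since the same bound holds for the adjoint $U(t)U(s)^{*}$, the Riesz--Thorin theorem yields for every $2\le r\le\infty$
\begin{equation*}
\|U(t)U(s)^{*}g\|_{L^{r}_{x,y}} \lesssim A^{1-\frac{2}{r}}\,|t-s|^{-\sigma\left(1-\frac{2}{r}\right)}\,\|g\|_{L^{r'}_{x,y}}.
\end{equation*}

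Next I would set up the $TT^{*}$ reduction. Writing $Uf(t)=U(t)f$, the homogeneous estimate $\|U(t)f\|_{L^{q}_{t}L^{r}_{x,y}}\lesssim A^{\frac1{r'}-\frac12}\|f\|_{L^{2}_{x,y}}$ is equivalent, via $\|U\|=\|UU^{*}\|^{1/2}$, to the boundedness of the operator $TF(t)=\int_{\mathbb{R}}U(t)U(s)^{*}F(s)\,ds$ from $L^{q'}_{t}L^{r'}_{x,y}$ to $L^{q}_{t}L^{r}_{x,y}$ with norm $\lesssim A^{\frac{2}{r'}-1}$ (note $A^{1-2/r}=A^{2/r'-1}$). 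Inserting the interpolated kernel bound and applying Minkowski's inequality gives
\begin{equation*}
\|TF\|_{L^{q}_{t}L^{r}_{x,y}} \lesssim A^{1-\frac{2}{r}}\left\|\int_{\mathbb{R}}|t-s|^{-\sigma\left(1-\frac{2}{r}\right)}\|F(s)\|_{L^{r'}_{x,y}}\,ds\right\|_{L^{q}_{t}}.
\end{equation*}
The inner object is fractional integration with exponent $\lambda=\sigma(1-\tfrac{2}{r})$, so the Hardy--Littlewood--Sobolev inequality controls it from $L^{q'}_{t}$ to $L^{q}_{t}$ precisely when $\tfrac{2}{q}=\sigma(1-\tfrac{2}{r})$, i.e. exactly the sharp admissibility condition $\tfrac1q+\tfrac{\sigma}{r}=\tfrac{\sigma}{2}$, provided $0<\lambda<1$, which holds away from $q=2$. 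The general inhomogeneous bound with $(q,r)\neq(\tilde q,\tilde r)$ follows by the same computation applied to the bilinear form $\langle TF,G\rangle$ together with Hölder in the two time variables, and the retarded truncation $\int_{s<t}$ is recovered from the full integral by the Christ--Kiselev lemma, which applies off the double endpoint.

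The genuinely hard part is the endpoint $q=\tilde q=2$, forcing $r=\tilde r=\tfrac{2\sigma}{\sigma-1}$ and $\lambda=1$, exactly where both Hardy--Littlewood--Sobolev and Christ--Kiselev break down. Here I would follow Keel--Tao: decompose the retarded bilinear form dyadically by $|t-s|\sim 2^{j}$, and for each piece $T_{j}$ establish an off-diagonal bilinear estimate
\begin{equation*}
\left|\langle T_{j}F,G\rangle\right| \lesssim A\,2^{-j\beta(a,b)}\,\|F\|_{L^{2}_{t}L^{a'}_{x,y}}\,\|G\|_{L^{2}_{t}L^{b'}_{x,y}}
\end{equation*}
for exponent pairs $(a,b)$ near $(r,r)$, with $\beta>0$ off the diagonal and $\beta(r,r)=0$. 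Because the scales cannot be summed term by term at the endpoint itself, one interpolates these estimates using real interpolation in Lorentz spaces in the $(x,y)$ variables, which turns the logarithmically divergent sum into a convergent one; I expect this interpolation-and-summation step to be where the real work lies. Once the endpoint bilinear bound is secured, the homogeneous endpoint estimate follows by $TT^{*}$, and the endpoint retarded inhomogeneous estimate is read off directly from the dyadic decomposition rather than through Christ--Kiselev.
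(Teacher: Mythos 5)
The paper does not prove this lemma: it is imported verbatim from Keel--Tao \cite{Keel-Tao}, so there is no internal argument to compare against. Your sketch is a faithful and correct outline of the Keel--Tao proof itself --- interpolated dispersive bound, $TT^{*}$ reduction, Hardy--Littlewood--Sobolev at non-endpoint exponents (where the sharp admissibility relation $\tfrac1q+\tfrac{\sigma}{r}=\tfrac{\sigma}{2}$ is exactly the HLS scaling condition), Christ--Kiselev for the retarded truncation, and the dyadic bilinear decomposition with Lorentz-space real interpolation at the endpoint --- with the exponent bookkeeping for $A$ done correctly; note only that in the paper's application $\sigma=\tfrac12<1$, so every sharp admissible pair has $q\geq 4$ and the endpoint machinery is never actually invoked.
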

As a consequence, we have the following Strichartz estimates.
\begin{proposition}
\label{strichartz}
Let $$S(t) := \exp\left(it(\partial_{xx}-|D_y|)\right).$$
Then, for $f\in L^2_xH^s_y(\mathbb{R}^{2})$ with $s>\frac{1}{2}$, we have
\begin{equation}
\label{Strich:lin}
\|S(t)f\|_{L^4_TL^\infty_{x,y}} \lesssim \|f\|_{L^2_xH^{s}_y}
\end{equation}
and for $F \in L^1_T L^2_{x}H^{s}_y$
\begin{equation}
\label{Strich:non-homo}
\left\|\int_0^t S(t-s)F(s)ds\right\|_{L^4_TL^\infty_{x,y}} \lesssim \|F\|_{L^1_T L^2_{x}H^{s}_y} .
\end{equation}
\end{proposition}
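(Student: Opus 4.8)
The plan is to apply the abstract Keel--Tao estimate of Lemma \ref{Lem:Keel-Tao} frequency-by-frequency in the $y$-variable and then sum the dyadic blocks, the summation being exactly what forces $s>\frac{1}{2}$. I would first write the propagator as the tensor product $S(t)=e^{it\partial_{xx}}\,e^{-it|D_y|}$, and for a dyadic number $N\geq 1$ let $P_N$ denote the Littlewood--Paley projection onto the $y$-frequencies $|\eta|\sim N$. Set $U_N(t):=S(t)P_N$. The whole estimate is then obtained by proving a gain of $N^{1/2}$ for each $U_N$ and summing, which is where the regularity $s>\frac12$ enters.

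Next I would verify the two hypotheses of Lemma \ref{Lem:Keel-Tao} for $U_N$. The $L^2$-bound \eqref{cond1U:L2} is immediate, since $e^{it\partial_{xx}}$ and $e^{-it|D_y|}$ are unitary and $P_N$ is a contraction on $L^2_{x,y}$. For the dispersive bound \eqref{cond2U:L2}, I use that the kernel of $U_N(t)$ factorizes into its $x$- and $y$-parts. The one-dimensional Schr\"odinger kernel obeys $\|e^{it\partial_{xx}}\|_{L^1_x\to L^\infty_x}\lesssim |t|^{-1/2}$, while the frequency-localized half-wave kernel $K^y_N(t)=\mathcal{F}_y^{-1}\!\left[e^{-it|\eta|}\chi(\eta/N)\right]$ satisfies $\|K^y_N(t)\|_{L^\infty_y}\lesssim\int_{|\eta|\sim N}d\eta\lesssim N$ \emph{uniformly in $t$}: there is no stationary phase and hence no time decay coming from the $y$-direction. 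Since $U_N(s)(U_N(t))^{*}=e^{i(s-t)\partial_{xx}}e^{-i(s-t)|D_y|}P_N$, multiplying the two kernel bounds gives
\[
\|U_N(s)(U_N(t))^{*}g\|_{L^\infty_{x,y}}\lesssim |s-t|^{-1/2}\,N\,\|g\|_{L^1_{x,y}},
\]
which is \eqref{cond2U:L2} with $\sigma=\tfrac12$ and $A=N$.

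With $\sigma=\tfrac12$ the pair $(q,r)=(4,\infty)$ is sharp $\sigma$-admissible, since $\tfrac1q+\tfrac{\sigma}{r}=\tfrac14+0=\tfrac{\sigma}{2}$, and it avoids the forbidden endpoint because $q=4>2$. Lemma \ref{Lem:Keel-Tao} then yields $\|U_N(t)f\|_{L^4_tL^\infty_{x,y}}\lesssim A^{1/r'-1/2}\|f\|_{L^2}=N^{1/2}\|P_Nf\|_{L^2_{x,y}}$ (here $r'=1$). Summing over the dyadic blocks by the triangle inequality and Cauchy--Schwarz,
\[
\|S(t)f\|_{L^4_tL^\infty_{x,y}}\leq\sum_N\|U_N(t)f\|_{L^4_tL^\infty_{x,y}}\lesssim\sum_N N^{1/2}\|P_Nf\|_{L^2}\leq\Big(\sum_N N^{1-2s}\Big)^{1/2}\|f\|_{L^2_xH^s_y},
\]
and the geometric series $\sum_N N^{1-2s}$ converges precisely when $s>\tfrac12$. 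Restricting $t$ to $(-T,T)$ gives \eqref{Strich:lin} with a $T$-independent constant. Finally, the inhomogeneous bound \eqref{Strich:non-homo} follows from \eqref{Strich:lin} by Minkowski's integral inequality: writing the Duhamel term as $\int_{\R}\mathbf{1}_{[0,t]}(s)\,S(t-s)F(s)\,ds$ and pulling the $L^4_tL^\infty_{x,y}$ norm inside the $s$-integral, the time-translation invariance of \eqref{Strich:lin} bounds each slice by $\|F(s)\|_{L^2_xH^s_y}$, whose $s$-integral is $\|F\|_{L^1_TL^2_xH^s_y}$. (Alternatively one applies the inhomogeneous half of Lemma \ref{Lem:Keel-Tao} with the dual admissible pair $(\tilde q,\tilde r)=(\infty,2)$ and sums the blocks as above, again producing the factor $N^{1/2}$.)

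The delicate point is the dyadic summation. Because the half-wave direction carries no dispersion, the frequency-localized dispersive estimate retains the loss $A=N$, which Keel--Tao turns into the factor $N^{1/2}$, exactly a half-derivative loss; this is simultaneously what makes $s>\tfrac12$ sufficient and what signals that one cannot descend below the Sobolev threshold, consistent with Remark \ref{Rk1.6}. I would therefore be careful to (i) check the admissibility and endpoint hypotheses of Lemma \ref{Lem:Keel-Tao} for $(4,\infty)$ at $\sigma=\tfrac12$, and (ii) treat the low-frequency block $N\lesssim 1$ separately, where the constant is $O(1)$ and the contribution is controlled directly by $\|f\|_{L^2_xH^s_y}$.
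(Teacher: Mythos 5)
Your proposal is correct and follows essentially the same route as the paper: a dyadic Littlewood--Paley decomposition in the $y$-frequency (the paper's $\varphi(h|D_y|)$ with $h=N^{-1}$ is your $P_N$), a frequency-localized dispersive bound with $\sigma=\tfrac12$ and loss $A=N$ coming from the $1$D Schr\"odinger decay in $x$ and a Bernstein-type bound in $y$, an application of Lemma \ref{Lem:Keel-Tao} to each block, and summation by Cauchy--Schwarz, which is exactly where $s>\tfrac12$ is used. The only (harmless) deviation is that you derive the inhomogeneous bound \eqref{Strich:non-homo} by Minkowski's inequality from the homogeneous one, whereas the paper invokes the retarded estimate of Lemma \ref{Lem:Keel-Tao} with the dual pair $(\infty,2)$ and sums the blocks in the same way.
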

\begin{remark}
The proofs of \eqref{Strich:lin} and \eqref{Strich:non-homo} does not seem to follow from 1D-Strichartz estimate because of a problem of changing $x$ and $y$ spatial norms. Instead, we adapt a standard proof using a semi-classical approach (for more details, see for example \cite{Bou-Tz}, \cite{Keel-Tao}, \cite{Robert}, \cite{Zworski} and the references therein).
\end{remark}
\begin{proof}
Let $\varphi\in C_{0}^{\infty}(\mathbb{R})$ 
be a cut-off function $0 \leq \varphi \leq 1$ and $h \in (0,1]$. Denote by 
$$S_h(t):=  S(t) \varphi(h |D_y|),$$
where the operator $\varphi(h |D_y|)$ is defined by 
$$\boF_y(\varphi(h |D_y|) f )(\xi) = \varphi(h |\xi|) \boF_y( f )(\xi).$$
We recall the Bernstein estimate (see, for example, 
\cite[Lemma 11.4]{K T V}) 
\[
\|e^{i(t-s)|D_{y}|}\varphi(h |D_y|)g_{1}\|_{L^p_y} 
\lesssim h^{\frac{1}{p} -\frac{1}{q}} \|e^{i(t-s)|D_{y}|}\varphi(h |D_y|) g_{1}\|_{L^q_y}, \quad 
\mbox{for any $p,q \geq 1$},
\]
and the dispersion estimate for $1$D Schr\"{o}dinger operator
(see e.g. Cazenave~\cite[Proposition 2.3]{Cazenave}),
\[
\|e^{i(t-s)\partial_{xx}}g_2\|_{L^\infty_x} \lesssim |t-s|^{-\frac{1}{2}} \|g_2\|_{L^1_x}, \quad 
\mbox{for any $g_{2} \in L^1_{x}(\mathbb{R})$}.
\] 
Then, from the two previous estimates 
with 
$g_{1} = e^{i (t - s)\partial_{xx}} f$ and $g_{2} = f$
and $p=\infty$ and $q=1$, we have
\begin{align*}
\|S_h(s)(S_h(t))^{*}f\|_{L^\infty_{x,y}} \lesssim h^{-1} \|e^{i(t-s)\partial_{xx}}\varphi(h |D_y|)f\|_{L^1_y L^\infty_{x}}\lesssim |t-s|^{-\frac{1}{2}} h^{-1} \|  f\|_{L^1_{x,y}}. 
\end{align*}
This allows us to apply Lemma \ref{Lem:Keel-Tao} because 
$S_{h}$ is a unitary operator on $L^{2}(\mathbb{R}^{2})$. 
Since $(4,\infty)$ and $(\infty,2)$
are sharp $\frac{1}{2}$-admissible exponent pairs, we obtain
$$\|S_h(t) f\|_{L^4_TL^\infty_{x,y}} 
\lesssim h^{-\frac{1}{2}} \| f\|_{L^2_{x,y}},$$
and 
$$ \left\|\int_{t<s} S_h(t)(S_h(s))^{*} F(s) ds\right\|_{L^4_TL^\infty_{x,y}} \lesssim h^{-\frac{1}{2}} \| F\|_{L^{1}_TL^{2}_{x,y}}.$$
By choosing $\tilde{\varphi} \in C^\infty_0(\R\setminus \{0\})$ satisfying $\tilde{\varphi}=1$ near the support of $\varphi$, we write
$$
S_h(t) f = S(t) \tilde{\varphi} (h |D_y|) \varphi(h |D_y|) f.
$$
We apply the two previous estimates with 
$S(t) \tilde{\varphi} (h |D_y|)$ and $\varphi(h |D_y|) f$ 
instead of $S_h(t)$ and $f$, respectively. 
Then, we obtain
\begin{equation}
\label{Strich:lin-1}
\|S_h(t) f\|_{L^4_TL^\infty_{x,y}} 
\lesssim h^{-\frac{1}{2}} \|\varphi(h |D_y|) f\|_{L^2_{x,y}},
\end{equation}
and 
\begin{equation}
\label{homo-strich-L6-1}
\left\|\int_{t<s} S_h(t)(S_h(s))^{*} F(s) ds\right\|_{L^4_TL^\infty_{x,y}} \lesssim h^{-\frac{1}{2}} \|\varphi(h |D_y|) F\|_{L^{1}_TL^{2}_{x,y}}.
\end{equation}
Now, we take $h=N^{-1}$ with $N$ is a dyadic number i.e. $N=2^n$, 
$n \in \N$. Note that
$$
S(t)f = \sum_{N = 2^{n}, n \in \mathbb{N}} S(t) \varphi(N^{-1} |D_y|) f, 
$$
This together with \eqref{Strich:lin-1} and 
the Cauchy Schwarz inequality for the sum on 
$N$ yields that 
\begin{align*}
\|S(t)f\|_{L^4_TL^\infty_{x,y}}  
&\lesssim 
\sum_{N = 2^{n}, n \in \mathbb{N}} \|S_{N^{-1}}(t)f\|_{L^4_T L^\infty_{x,y}} 
\lesssim \sum_{N = 2^{n}, n \in \mathbb{N}}  
N^{\frac{1}{2}} 
\|\varphi(N^{-1} |D_y|) f\|_{L^2_{x,y}} \\
& \leq 
\left(\sum_{N = 2^{n}, n \in \mathbb{N}} N^{2s} \|\varphi(N^{-1} |D_y|)
f\|_{L^2_{x,y}}^2\right)^\frac{1}{2} 
\left( \sum_{N = 2^{n}, n \in \mathbb{N}} N^{-2s + 1} \right)^{\frac{1}{2}}\\
&\lesssim \|f\|_{L^2_xH^{s}_y}, 
\end{align*}
where we have used the fact $s > 1/2$ in the last inequality.  
This leads to \eqref{Strich:lin}. 
Applying a similar argument to the above estimate, 
we infer from \eqref{homo-strich-L6-1} that
\begin{align*}
\left\|\int_{t<s} S(t)(S(s))^{*} F(s) ds\right\|_{L^4_TL^\infty_{x,y}}  &\lesssim \sum_N \left\|\int_{t<s} S_{N^{-1}}(t)(S_{N^{-1}}(s))^{*} F(s) ds\right\|_{L^4_TL^\infty_{x,y}}  \\
&\lesssim \sum_N  N^{\frac{1}{2}} \|\varphi(N^{-1} |D_y|)F\|_{L^{1}_TL^{2}_{x,y}}  \\
&\lesssim \int \left(\sum_N  N^{s} \|\varphi(N^{-1} |D_y|) F(t)\|_{L^2_{x,y}}^2\right)^\frac{1}{2} dt\\
& \lesssim \|F\|_{L^{1}_TL^2_xH^{s}_y}
\end{align*}
Thus, we see that \eqref{Strich:non-homo} holds.
\end{proof}
Next, we recall the following fundamental result (see for example Theorem 1.2.5 in \cite{Cazenave}).
\begin{lemma}
\label{thm1.2.5}
Consider two Banach spaces $X \hookrightarrow Y$, 
an open interval $I \subset \R$ and $1< p,q\leq \infty.$ 
Let $(u_n)_{n\in \N}$ be a bounded sequence in $L^q(I,Y)$ and 
$u: I \to Y$ be such that $u_n(t) \rightharpoonup u(t)$ in $Y$ 
as $n\to \infty$, for a.a. $t\in I$. 
If $(u_n)_{n\in \N}$ is bounded in $L^p(I,X)$ and if $X$ is reflexive, 
$u\in L^p(I,X)$ and $\|u\|_{L^p(I,X)} \leq \liminf_{n\to \infty} \|u_n\|_{L^p(I,X)} $.
\end{lemma}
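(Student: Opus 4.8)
The plan is to extract a weakly (or weak-$*$) convergent subsequence of $(u_n)$ in $L^p(I,X)$, to identify its limit with the given pointwise weak limit $u$, and then to read off both conclusions from weak lower semicontinuity of the norm.

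First I would exploit the reflexivity of $X$. When $1<p<\infty$, reflexivity of $X$ makes $L^p(I,X)$ reflexive, so from the bound $\sup_n\|u_n\|_{L^p(I,X)}<\infty$ one extracts a subsequence with $u_{n_k}\rightharpoonup v$ in $L^p(I,X)$; when $p=\infty$, reflexivity of $X$ gives $X^*$ the Radon--Nikodym property, whence $L^\infty(I,X)=(L^1(I,X^*))^*$ and one extracts instead a weak-$*$ convergent subsequence. In either case weak (resp.\ weak-$*$) lower semicontinuity of the norm yields $\|v\|_{L^p(I,X)}\le\liminf_{k\to\infty}\|u_{n_k}\|_{L^p(I,X)}$.

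The crux is to prove $v=u$ a.e. Writing $j:X\hookrightarrow Y$ for the continuous (injective) embedding and $j^*:Y^*\to X^*$ for its adjoint, I would fix $\eta\in Y^*$ and $\phi\in C_c(I)$ and test the convergence against $\phi\, j^*\eta$, which lies in $L^{p'}(I,X^*)$ when $1<p<\infty$ and in $L^1(I,X^*)$ when $p=\infty$, hence is a legitimate functional in each case; this gives
\[
\int_I \phi(t)\,\langle \eta, u_{n_k}(t)\rangle\,dt \longrightarrow \int_I \phi(t)\,\langle \eta, j(v(t))\rangle\,dt .
\]
On the other hand, the hypothesis $u_{n_k}(t)\rightharpoonup u(t)$ in $Y$ gives $\langle\eta,u_{n_k}(t)\rangle\to\langle\eta,u(t)\rangle$ for a.a.\ $t$; to pass this pointwise convergence into the integral over the compact set $K=\operatorname{supp}\phi$ I would invoke the bound in $L^q(I,Y)$ with $q>1$. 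Indeed $\int_K|\langle\eta,u_{n_k}(t)\rangle|^q\,dt\le\|\eta\|_{Y^*}^q\|u_{n_k}\|_{L^q(I,Y)}^q$ stays uniformly bounded, so $\{\langle\eta,u_{n_k}\rangle\}$ is uniformly integrable on the finite-measure set $K$, and the Vitali convergence theorem upgrades the a.e.\ convergence to convergence of the integrals. Comparing the two limits forces $\langle\eta,u(t)\rangle=\langle\eta,j(v(t))\rangle$ for a.a.\ $t$; running $\eta$ through a countable total subset of $Y^*$ (legitimate since $u$ and $j\circ v$ are essentially separably valued) yields $u=j\circ v$ a.e., so $u$ has a representative in $L^p(I,X)$ with $\|u\|_{L^p(I,X)}=\|v\|_{L^p(I,X)}$.

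Finally, to replace the subsequential bound by the full $\liminf$, I would first pass to a subsequence of $(u_n)$ along which $\|u_n\|_{L^p(I,X)}$ converges to $\liminf_n\|u_n\|_{L^p(I,X)}$, then extract the weakly convergent sub-subsequence as above; its limit is again $u$, since the pointwise weak-$Y$ limit does not depend on the subsequence, so $\|u\|_{L^p(I,X)}\le\liminf_n\|u_n\|_{L^p(I,X)}$. I expect the main obstacle to be exactly the identification step $v=u$: one must reconcile a weak-$L^p(I,X)$ statement (tested by $X^*$) with a pointwise weak-$Y$ statement (tested by $Y^*$), and the only mechanism allowing the limit to enter the time integral is the uniform integrability coming from the $L^q$-bound with $q>1$, which is precisely why the hypothesis $q>1$ cannot be dropped.
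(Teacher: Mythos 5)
The paper does not actually prove this lemma: it is quoted verbatim from Cazenave (Theorem 1.2.5 of \cite{Cazenave}), so there is no in-paper argument to compare against. The classical proof there is pointwise in $t$: after passing to a subsequence realizing $\liminf_n\|u_n\|_{L^p(I,X)}$, one shows for a.a.\ $t$ with $\liminf_n\|u_n(t)\|_X<\infty$ that a further ($t$-dependent) subsequence converges weakly in the reflexive space $X$, identifies that limit with $u(t)$ through the embedding, deduces $\|u(t)\|_X\le\liminf_n\|u_n(t)\|_X$ a.e., and concludes by Fatou's lemma together with the Pettis measurability theorem. Your route is genuinely different: you work globally in the Bochner space $L^p(I,X)$, extract a weakly (or weak-$*$) convergent subsequence there, and identify its limit with $u$ by testing against $\phi\,j^*\eta$ and upgrading the a.e.\ convergence of $\langle\eta,u_{n_k}(\cdot)\rangle$ via Vitali and the $L^q$ bound with $q>1$. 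That identification step is carried out correctly and isolates exactly where the hypothesis $q>1$ enters; the final reduction to the full $\liminf$ is also fine.

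There is one gap, and it occurs in the case $p=\infty$ --- which is precisely the case the paper invokes ($p=q=\infty$ in Step~1 of the proof of Theorem \ref{Cauchy}). Writing $L^\infty(I,X)=(L^1(I,X^*))^*$ gives weak-$*$ compactness of bounded sets by Banach--Alaoglu, but to extract a weak-$*$ convergent \emph{subsequence} you need sequential compactness, hence separability of the predual $L^1(I,X^*)$, equivalently of $X^*$; reflexivity of $X$ does not guarantee this. The standard repair is to observe that each $u_n$, being Bochner measurable, is essentially separably valued, so the whole sequence takes values in a common closed separable subspace $X_0\subset X$, which is reflexive with separable dual, and to perform the extraction in $L^\infty(I,X_0)$. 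With that amendment (harmless in the paper's application, where $X=L^2_xH^s_y$ is a separable Hilbert space) your argument is complete. The pointwise/Fatou proof sidesteps this issue entirely, since Eberlein--\v{S}mulian provides weak sequential compactness in the reflexive space $X$ itself; what your approach buys in exchange is that the lower semicontinuity of the norm is obtained in one stroke at the level of $L^p(I,X)$ rather than pointwise.
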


Using Lemmas \ref{product-chain0}, 
\ref{product-chain}, \ref{thm1.2.5} and Proposition \ref{strichartz}, 
we shall show Theorem \ref{Cauchy}. 
\begin{proof}[Proof of Theorem \ref{Cauchy}]
The equation \eqref{WS} with $u|_{t=0} = u_{0}$ 
is equivalent to the following 
integral equation: 
\[
\psi(t) = S(t) \psi_{0} + i \int_{0}^{t} S(t - \tau)|\psi|^{p-1}\psi(\tau) d\tau.  
\]
We put 
\[
Y_{T} := L^{4}((-T, T); L_{x, y}^{\infty}(\mathbb{R}^{2})) 
\cap L^{\infty} ((-T, T); L^{2}_{x} H_{y}^{s}(\mathbb{R}^{2})),  
\]
where $s > 1/2$. 
Fix $T, M >0$, to be chosen later, and consider the set 
\[
E := \left\{u \in Y_{T} \mid 
\|u\|_{L_{T}^{4}((-T, T); L^{\infty}_{x, y}(\mathbb{R}^{2}))} \leq M, \; 
\|u\|_{L_{T}^{\infty}((-T, T); L^{2}_{x}H^{s}_{y}(\mathbb{R}^{2}))} \leq M
\right\}
\]
equipped with the distance 
\[
d(u, v) = \|u - v\|_{L^{4}_{T}((-T, T), L^{\infty}_{x, y}(\mathbb{R}^{2}))} 
+ \|u - v\|_{L^{\infty}_{T}((-T, T), L^{2}_{x,y}(\mathbb{R}^{2}))}. 
\]
We will split the proof into two steps. In the first one we will show that $(E,d)$ is a complete metric space. The second one is dedicated to the contraction mapping argument.

\underline{\textit{Step 1}:} 
We shall prove that $(E,d)$ is a complete metric space. It is sufficient to show that $E$ is a closed subspace in $L^{\infty}((-T, T), L^{2}_{x,y}(\mathbb{R}^{2})) \cap L^{4}((-T, T); L_{x, y}^{\infty}(\mathbb{R}^{2}))$ which is a complete metric space 
equipped with the distance $d$. 

Let $(u_n)_{n\in \N}$ be a sequence included into $E$ and satisfying $d(u_n,u) \to 0$ as $n\to \infty$ for some $u \in L^{\infty}((-T, T), L^{2}_{x,y}(\mathbb{R}^{2})) \cap L^{4}((-T, T); L_{x, y}^{\infty}(\mathbb{R}^{2}))$. Our aim is to prove that $u \in E$. We know that 
$$\|u\|_{L_{T}^{4}L^{\infty}_{x, y}} \leq M$$
is a direct consequence of the convergence $u_n \to u$ in $L^{4}((-T, T); L_{x, y}^{\infty}(\mathbb{R}^{2}))$ as $n\to \infty$. It remains to show that 
\begin{equation}
\label{u-in-Hs}
u \in  L^{\infty}((-T, T); L^{2}_{x}H^{s}_{y}(\mathbb{R}^{2})) \qquad \|u\|_{L_{T}^{\infty}L^{2}_{x}H^{s}_{y}} \leq M.
\end{equation}  
From the fact $\lim_{n \to \infty}d(u_{n}, u) = 0$, 
we can easily see that 
$u_n(t) \to u(t)$ in $L^{2}_{x,y}(\mathbb{R}^{2})$ 
for $\mbox{a.a.}\ t \in (-T, T)$. We apply Lemma \ref{thm1.2.5} for $X=L^2_xH^s_y(\R^2)$, $Y=L^{2}_{x,y}(\mathbb{R}^{2})$ and $p=q=\infty$ in order to obtain
$u(t) \in L^{2}_{x} H_{y}^{s}(\mathbb{R}^{2})$  and 
$$\|u(t)\|_{L^{2}_{x} H_{y}^{s}} \leq \liminf_{n\to \infty}  \|u_n(t)\|_{L^{2}_{x} H_{y}^{s}} \leq M,$$ for a.a. $t \in (-T, T)$. This proves \eqref{u-in-Hs}. Thus $u \in E$ and $(E,d)$ is a complete metric space.

\underline{\textit{Step 2}:} For each $u_{0} \in L^2_x H^{s}_{ y}(\mathbb{R}^{2})$, 
we put
\[
\mathcal{T}[u](t) := S(t)u_{0} + i \int_{0}^{t} S(t-\tau)|u|^{p-1}u(\tau) d\tau. 
\]
Thus, it is enough to show that $\mathcal{T}[u]$ is a contraction mapping 
on $E$.
First, we claim that $\mathcal{T}$ maps from $E$ to itself. 
By the Sobolev embedding $H_{y}^{s}(\mathbb{R}) \hookrightarrow 
L_{y}^{\infty}(\mathbb{R})\; (s>1/2)$, the Strichartz estimate 
\eqref{Strich:lin} and \eqref{Strich:non-homo}, 
the inequality \eqref{eq-chain} 
and the H\"{o}lder inequality, we have 
\begin{equation} \label{App-eq-0}
\begin{split}
 \quad \|\mathcal{T}[u](t)\|_{L_{T}^{4}L_{x, y}^{\infty}}
& \lesssim \|u_{0}\|_{L_{x}^{2}H^{s}_{y}} + \| |u|^{p-1}u
\|_{L^1_T L_{x}^{2}H^{s}_{y}} \\
& \lesssim \|u_{0}\|_{L_{x}^{2}H^{s}_{y}} + \| \|u\|_{L_{y}^{\infty}(\mathbb{R})}^{p-1}\|u\|_{H_{y}^{s}} \|_{L_{T}^{1} L_{x}^{2}} \\
& \lesssim 
\|u_{0}\|_{L_{x}^{2}H^{s}_{y}} + \| \|u\|_{L_{x, y}^{\infty}}^{p-1}
\|u\|_{L^{2}_{x}H_{y}^{s}}\|_{L^{1}_{T}} \\
& \lesssim \|u_{0}\|_{L_{x}^{2}H^{s}_{y}} + T^{\theta} \|u\|_{L_{T}^{4} L_{x, y}^{\infty}}^{p-1} 
\|u\|_{L^{\infty}_{T}L_{x}^{2}H_{y}^{s}},  
\end{split}
\end{equation}
where $\theta = (5-p)/4$. 
This yields that there exist $C_{1, 1}>0$
and $C_{1, 2} >0$ such that 
\begin{equation} \label{App-eq-1}
\|\mathcal{T}[u](t)\|_{L_{T}^{4} L_{x, y}^{\infty}}
\leq C_{1, 1} \|u_{0}\|_{L_{x}^{2}H_{y}^{s}} + C_{1, 2} T^{\theta} M^{p}. 
\end{equation}
Similarly, by the Strichartz estimate 
\eqref{Strich:non-homo}, 
the inequality \eqref{eq-chain}  
and the H\"{o}lder inequality, we obtain 
\begin{equation}
\label{App-eq-2.1}
\begin{split}
 \|\mathcal{T}[u](t)\|_{L_{T}^{\infty}
L_{x}^{2}H_{y}^{s}}
& \lesssim 
\|u_{0}\|_{L_{x}^{2}H_{y}^{s}} 
+ \| \||u|^{p-1}u(\tau)\|_{H_{y}^{s}}
\|_{L_{T}^{1}L_{x}^{2}} \\
& \lesssim 
\|u_{0}\|_{L_{x}^{2}H_{y}^{s}} + T^{\theta} \|u\|_{L_{T}^{4} L_{x, y}^{\infty}}^{p-1} 
\|u\|_{L^{\infty}_{T} L_{x}^{2}H_{y}^{s}}.
\end{split}
\end{equation}
If $1<p<5$, we write 
\begin{equation} \label{App-eq-2}
\|\mathcal{T}[u](t)\|_{L_{T}^{\infty}L_{x}^{2}
H_{y}^{s}}
\leq C_{2, 1} \|u_{0}\|_{L_{x}^{2}H_{y}^{s}} 
+ C_{2, 2} T^{\theta} M^{p}, 
\end{equation}
where $C_{2, 1}>0$ and $C_{2, 2} >0$. We take $M>0$ sufficiently large so that 
\[
\max\{C_{1, 1}, C_{2, 1}\}\|u_{0}\|_{L_{x}^{2}H_{y}^{s}(\mathbb{R}^{2})} 
< \frac{M}{2}. 
\]
For such $M>0$, we take $T >0$ sufficiently small so that 
$\max\{C_{2, 1}, C_{2, 2}\} T^{\theta}M^{p} < M/2$. 
This together with \eqref{App-eq-1} and \eqref{App-eq-2} yields that 
\[
\|\mathcal{T}[u](t)\|_{L_{T}^{4}L_{x, y}^{\infty}} \leq M
\quad {\rm and} \quad  \|\mathcal{T}[u](t)\|_{L_{T}^{\infty} L_{x}^{2}H_{y}^{s}}
\leq M.
\]
This implies that $\mathcal{T}[u] \in E$ when $1<p<5$. 

If $p=5$, $\theta=0$ and \eqref{App-eq-2.1} becomes 
\begin{equation} \label{App-eq-2-2}
\|\mathcal{T}[u](t)\|_{L_{T}^{\infty}L_{x}^{2}
H_{y}^{s}} \leq \frac{M}{2} + C_{2, 2}  M \|u\|_{L_{T}^{4} L_{x, y}^{\infty}}^{4}. 
\end{equation}
For $T$ small enough, we claim that
\begin{equation}
\label{small-L4}
\|u\|_{L_{T}^4L_{x, y}^{\infty}} \leq  \delta, 
\end{equation}
where $\delta>0$ is a small real number. Including \eqref{small-L4} into \eqref{App-eq-2-2}, we infer that 
$$\|\mathcal{T}[u](t)\|_{L_{T}^{\infty}L_{x}^{2}H_{y}^{s}} \leq \frac{M}{2} + C_{2, 2}  M \delta^{4}.$$
Hence, for $\delta$ small enough, we obtain
\[
\|\mathcal{T}[u](t)\|_{L_{T}^{4}L_{x, y}^{\infty}} \leq M
\quad {\rm and} \quad  \|\mathcal{T}[u](t)\|_{L_{T}^{\infty} L_{x}^{2}H_{y}^{s}}\leq M,
\]
and then $\mathcal{T}[u] \in E$ when $p=5$. 

We will prove now \eqref{small-L4}. We write the Duhamel formula 
$$u(t)= S(t)u_{0} + i \int_{0}^{t} S(t-\tau)|u|^{4}u(\tau) d\tau.$$
Using \eqref{Strich:non-homo} and H\"{o}lder estimate, we get
\begin{align}
\|u\|_{L_{T}^4 L_{x, y}^{\infty}}&\lesssim 
\|S(t)u_{0}\|_{L_{T}^4 L_{x, y}^{\infty}} 
+ \||u|^{4} u\|_{L_{T}^1 L_{x}^{2}H^{s}_{y}} \nonumber\\
&\lesssim \|S(t)u_{0}\|_{L_{T}^4L_{x, y}^{\infty}} + \|u\|_{L_{T}^{4}L_{x, y}^{\infty}}^4 \|u\|_{L^{\infty}_{T} L_{x}^{2}H_{y}^{s}}\nonumber\\
& \leq \|S(t)u_{0}\|_{L_{T}^4L_{x, y}^{\infty}} + C_{2,2} M \|u\|_{L_{T}^{4}L_{x, y}^{\infty}}^4 .
\label{small-L4-bound}
\end{align}
By \eqref{Strich:lin} and the dominated convergence, we have
$$\|S(t)u_{0}\|_{L_{T}^4L_{x, y}^{\infty}} \to 0 \quad {\rm as} \quad T\to 0. $$
So that, for sufficiently small $T>0$,
\begin{equation}
\label{small-L4-lin}
\|S(t)u_{0}\|_{L_{T}^4L_{x, y}^{\infty}} \leq \frac{\delta}{2}. 
\end{equation}
Assume by contradiction that \eqref{small-L4} does not hold i.e. $\|u\|_{L_{T}^4L_{x, y}^{\infty}} = q \delta$ with $q >1$. From \eqref{small-L4-bound} and \eqref{small-L4-lin}, we obtain
$$q \delta \leq \frac{\delta}{2} + C_{2,2} M q^4 \delta^4 < \delta,$$
for $\delta>0$ small enough. Thus $q<1$ which is a contradiction. This finishes the proof of \eqref{small-L4}.

Next, we shall show that $\mathcal{T}$ is a contraction 
mapping on $E$. 
Let $u, v \in E$. For $1<p<5$, as in the similar estimate to \eqref{App-eq-0} and \eqref{App-eq-2.1}, we have 
\begin{equation*}
\|\mathcal{T}[u] - \mathcal{T}[v]\|
_{L_{T}^{4}L_{x, y}^{\infty}} 
\lesssim T^{\theta} M^{p-1} d(u, v),
\end{equation*}
and
\begin{equation*}
\|\mathcal{T}[u] - \mathcal{T}[v]\|
_{L_{T}^{\infty}L_{x,y}^{2}} \lesssim T^{\theta} M^{p-1} d(u, v). 
\end{equation*}

Taking $T > 0$ sufficiently small, we have $d(\mathcal{T}[u], \mathcal{T}[v]) < \frac{1}{2} d(u, v)$. 
Therefore, we deduce that $\mathcal{T}$ is a contraction mapping.

If $p=5$, we have in a similar way to \eqref{App-eq-0} and \eqref{App-eq-2.1}, we obtain
\begin{equation*}
\|\mathcal{T}[u] - \mathcal{T}[v]\|
_{L_{T}^{4} L_{x, y}^{\infty}} 
\lesssim \|u\|_{L_{T}^4L_{x, y}^{\infty}}^4 d(u, v),
\end{equation*}
and
\begin{equation*}
\|\mathcal{T}[u] - \mathcal{T}[v]\|
_{L_{T}^{\infty}L_{x,y}^{2}} \lesssim \|u\|_{L_{T}^4L_{x, y}^{\infty}}^4 d(u, v). 
\end{equation*}
For $T > 0$ sufficiently small, we infer from \eqref{small-L4} that $d(\mathcal{T}[u], \mathcal{T}[v]) < \frac{1}{2} d(u, v)$. 
Hence, $\mathcal{T}$ is a contraction mapping. 

This completes the proof. 
\end{proof}

\begin{remark}
\label{Rk5.3}
\eqref{small-L4} is also true for $1<p<5$. However, we use the smallness in $T$ instead to obtain dependence of the maximal time of existence with respect to the $L^2_xH^s_y$ norm of the initial data. Namely, we see that 
\[
\lim_{ t \to T_{\max} } \| u(t)\|_{L^2_xH^s_y} = \infty. 
\]
On the other hand, the method used in the case $p=5$ gives a different blow-up criteria depending on the Strichartz norm (see Chapter 4 in Cazenave \cite{Cazenave} for more details). 
\end{remark}

\appendix
\section{Some regularity result} \label{app-regularity}
In this appendix, we will show that the function $\psi_{\omega}$ introduced in Lemma \ref{thm3-1-1} and defined by \eqref{eq3-2} belongs to the energy space $X$. We recall that this fact is important to prove Lemma \ref{thm3-2}. 

We have the following result. 
\begin{proposition} \label{regularity}
Assume that $7/3 < p < 5$. 
Let $\psi_{\omega}$ be a function defined by \eqref{eq3-2}. 
Then, we have $\psi_{\omega} \in X$. 
\end{proposition}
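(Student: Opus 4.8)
The plan is to verify that each of the three terms making up $\psi_\omega$ in \eqref{eq3-2} lies in $X$, after first securing enough smoothness and spatial decay of the ground state $Q_\omega$. It is worth keeping in mind that $\psi_\omega = \frac{d}{d\lambda}\big(T_\lambda Q_\omega\big)\big|_{\lambda=1}$, the infinitesimal generator of the $L^2$-scaling \eqref{L2scale} evaluated at $Q_\omega$, so the claim is morally the statement that $\lambda \mapsto T_\lambda Q_\omega$ is a $C^1$ curve in $X$ near $\lambda = 1$; the estimates below are exactly what makes this rigorous. Concretely, the norm requires $\psi_\omega$, $\partial_x\psi_\omega$ and $|D_y|^{1/2}\psi_\omega$ to be in $L^2$. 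First I would record the regularity and decay of $Q_\omega$: bootstrapping elliptic regularity for \eqref{sp}, whose linear symbol $\xi^2+|\eta|+\omega \gtrsim 1+\xi^2+|\eta|$ gains two derivatives in $x$ and one in $y$, shows $Q_\omega$ is smooth, and combined with the decay estimates for ground states of Benjamin--Ono--Zakharov--Kuznetsov type (Esfahani--Pastor~\cite{Esfahani-Pastor1,Esfahani-Pastor2}) it yields finiteness of the weighted norms $\|\langle x\rangle\,\partial_x^{j}|D_y|^{a}Q_\omega\|_{L^2}$ and $\|\langle y\rangle\,\partial_y^{j}|D_y|^{a}Q_\omega\|_{L^2}$ in the ranges needed below.

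Since $Q_\omega \in X$, the term $\tfrac34 Q_\omega$ already lies in $X$, so it remains to treat $\tfrac12 x\partial_x Q_\omega$ and $y\partial_y Q_\omega$. For $x\partial_x Q_\omega$, multiplication by $x$ commutes with $|D_y|^{1/2}$, and $\partial_x(x\partial_x Q_\omega)=\partial_x Q_\omega + x\partial_{xx}Q_\omega$ while $|D_y|^{1/2}(x\partial_x Q_\omega)= x\,\partial_x|D_y|^{1/2}Q_\omega$. Thus membership in $X$ reduces to the finiteness of $\|x\partial_x Q_\omega\|_{L^2}$, $\|x\partial_{xx}Q_\omega\|_{L^2}$ and $\|x\,\partial_x|D_y|^{1/2}Q_\omega\|_{L^2}$ (together with $\partial_x Q_\omega\in L^2$, which is built into $X$), all of which follow from the weighted estimates of the previous step.

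The term $y\partial_y Q_\omega$ is more delicate because $|D_y|^{1/2}$ does not commute with multiplication by $y$. The $L^2$ and $\partial_x$ parts reduce as before to $\|y\partial_y Q_\omega\|_{L^2}$ and $\|y\,\partial_x\partial_y Q_\omega\|_{L^2}$, controlled by decay. For the fractional part I would split $|D_y|^{1/2}(y\partial_y Q_\omega)= y|D_y|^{1/2}\partial_y Q_\omega + [|D_y|^{1/2},y]\partial_y Q_\omega$ and compute the commutator on the Fourier side in $\eta$, where multiplication by $y$ is $i\partial_\eta$ and $|D_y|^{1/2}$ is $|\eta|^{1/2}$. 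This gives $[|D_y|^{1/2},y] = -\tfrac{i}{2}\operatorname{sgn}(D_y)|D_y|^{-1/2}$, hence $[|D_y|^{1/2},y]\partial_y Q_\omega = \tfrac12 |D_y|^{1/2}Q_\omega$, which lies in $L^2$ since $Q_\omega\in L^2_xH^{1/2}_y$. Using $|D_y|^{1/2}\partial_y Q_\omega = i\operatorname{sgn}(D_y)|D_y|^{3/2}Q_\omega$, the leftover term is $y|D_y|^{1/2}\partial_y Q_\omega = i\,y\operatorname{sgn}(D_y)|D_y|^{3/2}Q_\omega$, so it remains only to show $\|y|D_y|^{3/2}Q_\omega\|_{L^2}<\infty$.

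The main obstacle is precisely this last estimate: it demands three halves of a $y$-derivative of $Q_\omega$ together with a $y$-weight, and because $|D_y|$ is nonlocal the weight and the fractional derivative genuinely interact (the commutator identity above being one manifestation of this). I expect most of the effort to go into the regularity step, securing the simultaneous control of fractional $y$-regularity and polynomial $y$-decay of $Q_\omega$ --- either by differentiating \eqref{sp} in $y$ and running a weighted elliptic/energy estimate, or by invoking decay results strong enough to guarantee $\langle y\rangle\,|D_y|^{3/2}Q_\omega \in L^2$. Once that input is in hand, all remaining terms are disposed of by the commutation identities and weighted-norm bookkeeping sketched above, and $\psi_\omega \in X$ follows.
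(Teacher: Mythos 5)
Your reduction is carried out correctly as far as it goes --- the commutator identity $[|D_y|^{1/2},y]\partial_y Q_\omega=\tfrac12|D_y|^{1/2}Q_\omega$ is right, and you correctly isolate $\|y\,|D_y|^{3/2}Q_\omega\|_{L^2}$ (more precisely $\|y\,\mathrm{sgn}(D_y)|D_y|^{3/2}Q_\omega\|_{L^2}$, and $\mathrm{sgn}(D_y)$ does not commute with $y$ either) as the crux. But that is exactly where the proof stops: the weighted norms $\|\langle x\rangle\,\partial_x^{j}|D_y|^{a}Q_\omega\|_{L^2}$ and $\|\langle y\rangle\,|D_y|^{3/2}Q_\omega\|_{L^2}$ that you invoke are asserted, not proved, and they do not follow from the available results on $Q_\omega$. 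Theorem \ref{regular-1} gives unweighted regularity ($Q_\omega\in H^k$, derivatives bounded and vanishing at infinity) and Theorem \ref{regular-2} gives weighted bounds on $Q_\omega$ itself with no derivatives, and only for the weight $|y|^s$ with $s<3/2$; combining a polynomial weight in $y$ with a fractional power of the nonlocal operator $|D_y|$ is precisely the interaction you flag as ``the main obstacle,'' and neither of the two routes you sketch (a weighted elliptic/energy estimate, or a stronger decay theorem) is actually executed. So the proposal is a correct reduction with the decisive estimate left open.

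The paper sidesteps this difficulty entirely. Since $\psi_{\omega}$ differs from $R_{1}=\tfrac{1}{p-1}Q_{1}+\tfrac12 x\partial_{x}Q_{1}+y\partial_{y}Q_{1}$ by a multiple of $Q_{1}\in X$, it suffices to treat $R_{1}$, which satisfies the linearized equation \eqref{linear-Q}. Writing this as $(-\partial_{xx}+|D_y|+1)R_{1}=P$ with $P=-Q_{1}+pQ_{1}^{p-1}R_{1}$, the right-hand side involves only $Q_{1}^{p-1}$ multiplied by the weighted \emph{first} derivatives $x\partial_xQ_1$ and $y\partial_yQ_1$; its membership in $L^{2}$ follows from the pointwise decay of Theorems \ref{regular-1} and \ref{regular-2} (this is where $p>7/3$ enters), with no weighted fractional derivative ever appearing. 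Inverting the constant-coefficient operator through the multipliers $\Phi_{1},\Phi_{2},\Phi_{3}$ (bounded on $L^{2}$, by Lizorkin's theorem or directly) then yields $R_{1},\ \partial_{xx}R_{1},\ |D_y|R_{1}\in L^{2}$, which is strictly more than membership in $X$. If you want to salvage your direct approach, you would essentially have to reprove a weighted analogue of this elliptic regularity for $y|D_y|^{3/2}Q_\omega$, which is harder than the statement you are trying to establish; using the equation for $R_1$ is the efficient route.
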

By the scaling property \eqref{scale-Q}, 
we may assume that $\omega = 1$. 
We put 
\[
R_{1} = \partial_{\omega} Q_{\omega} |_{\omega = 1} 
= \frac{1}{p-1} Q_{1} + \frac{x \partial_{x} Q_{1}}{2} 
+ y \partial_{y} Q_{1}. 
\]
Then, $R_{1}$ satisfies the following: 
\begin{equation} \label{linear-Q}
- \partial_{xx} R_{1} + |D_{y}| R_{1} + R_{1} - p Q_{1}^{p-1} R_{1} = - Q_{1}. 
\end{equation}
Here, we recall the results of \cite{Esfahani-Pastor2}.

\begin{theorem}[Theorem 4.7 of \cite{Esfahani-Pastor2}]
\label{regular-1}
Let $\omega > 0$ and $1 < p < 5$. 
For any $k \in \mathbb{N}$ and any solution $Q_{\omega}$ 
to \eqref{sp}, we have $Q_{\omega} \in H^{k} 
\cap C^{\infty}(\mathbb{R}^{2})$.  
Moreover, all its derivatives are bounded and tends to zero 
at infinity. 
\end{theorem}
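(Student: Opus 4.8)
The plan is a standard elliptic bootstrap built around inverting the linear part of \eqref{sp}. Set $L := -\partial_{xx} + |D_y| + \omega$, whose Fourier multiplier is $m(\xi,\eta) = \xi^2 + |\eta| + \omega$ with $(\xi,\eta)$ dual to $(x,y)$. Since $m \geq \omega > 0$, the operator $L$ is boundedly invertible on $L^2_{x,y}$ and equation \eqref{sp} is equivalent to the fixed-point identity
\begin{equation*}
Q_\omega = L^{-1}\bigl(|Q_\omega|^{p-1}Q_\omega\bigr).
\end{equation*}
The elementary bound $(1 + \xi^2 + |\eta|)\,m(\xi,\eta)^{-1} \lesssim 1$ shows that each application of $L^{-1}$ gains two derivatives in $x$ and one derivative in $y$; this anisotropic smoothing drives the whole argument.

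The first task is to reach $Q_\omega \in L^\infty_{x,y}$, which is delicate because $X$ itself only embeds into $L^q_{x,y}$ for $q \leq 6$ (cf. Remark \ref{Rk1.6}) and hence not into $L^\infty$. Starting from $Q_\omega \in X$, the anisotropic Gagliardo--Nirenberg inequality \eqref{GN-ineq} and the mixed Sobolev embeddings place $Q_\omega$ in a range of Lebesgue spaces, so that $|Q_\omega|^{p-1}Q_\omega \in L^q_{x,y}$ for suitable $q$ (here $1 < p < 5$ keeps all exponents in the admissible range). The mapping properties of $L^{-1}$ on such $L^q$ spaces, together with the anisotropic Sobolev embeddings associated with its two-$x$, one-$y$ derivative gain, then improve both the integrability and the regularity of $Q_\omega$. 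Iterating this finitely many times yields $Q_\omega \in L^\infty_{x,y}$.

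With $Q_\omega \in L^\infty$ in hand, I would run the regularity bootstrap. The fractional chain-rule estimate \eqref{eq-chain} of Lemma \ref{product-chain} controls $\||D_y|^s(|Q_\omega|^{p-1}Q_\omega)\|_{L^2}$ by $\||D_y|^s Q_\omega\|_{L^2}\,\|Q_\omega\|_{L^\infty}^{p-1}$, while Lemma \ref{product-chain0} and its one-dimensional $x$-analogue handle the corresponding $x$-derivatives. Inserting these into the fixed-point identity and exploiting the derivative gain of $L^{-1}$ upgrades the mixed Sobolev regularity one notch at a time, giving $Q_\omega \in H^k_{x,y}(\mathbb{R}^2)$ for every $k \in \mathbb{N}$ by induction. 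The Sobolev embedding $H^k(\mathbb{R}^2) \hookrightarrow C^m_0(\mathbb{R}^2)$ for $k > m+1$ then yields $Q_\omega \in C^\infty(\mathbb{R}^2)$ together with the boundedness and vanishing at infinity of all of its derivatives.

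The hard part is propagating high-order regularity through the non-smooth nonlinearity: for non-integer $p$ the map $u \mapsto |u|^{p-1}u$ is only finitely differentiable, so one cannot simply differentiate \eqref{sp} $k$ times. Since the fractional chain rule \eqref{eq-chain} is available only for $0 < s < 1$, reaching $H^k$ with $k \geq 2$ forces one to interleave integer-order and fractional derivatives and to iterate carefully, all while respecting the genuinely anisotropic smoothing of $L^{-1}$ (two $x$-derivatives against one $y$-derivative). By contrast, the non-smoothness of the symbol $|\eta|$ at $\eta = 0$ is harmless: it is a low-frequency feature already absorbed by $Q_\omega \in L^2$, and it does not affect the high-frequency decay of $m^{-1}$ on which the smoothing rests.
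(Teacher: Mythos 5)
First, a point of comparison that matters here: the paper does not prove this statement at all. Theorem \ref{regular-1} is imported verbatim as Theorem 4.7 of \cite{Esfahani-Pastor2}, so there is no internal proof to measure your sketch against; the closest in-paper relative is Appendix \ref{app-regularity}, where Proposition \ref{regularity} is proved by exactly your linear-inversion device --- the multipliers $\Phi_{1} = (\xi_{1}^{2}+|\xi_{2}|+\omega)^{-1}$, $\Phi_{2}$, $\Phi_{3}$ are shown to satisfy the Lizorkin condition \eqref{ap} of Proposition \ref{HM}, which is what gives the anisotropic two-$x$/one-$y$ derivative gain of $L^{-1}$ on $L^{q}$. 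Your skeleton (invert $L=-\partial_{xx}+|D_y|+\omega$, climb to $L^{\infty}$, then bootstrap regularity) is therefore the standard route and consistent with the authors' own technique; where you write ``the mapping properties of $L^{-1}$ on such $L^q$ spaces,'' you should invoke precisely this Lizorkin-type theorem, since the isotropic Mikhlin--H\"ormander condition does not directly fit a symbol with the anisotropic scaling $\xi^{2}\sim|\eta|$ and a kink at $\eta=0$; the product-type condition \eqref{ap} is what saves it. The $L^{\infty}$ step itself is actually easier than you suggest: since $(1+\xi^{2}+|\eta|)^{-1}\in L^{2}_{\xi,\eta}(\mathbb{R}^{2})$, one gets $L^{-1}:L^{2}\to L^{\infty}$ by Cauchy--Schwarz on the Fourier side, so only one or two integrability upgrades are needed before this applies.

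The genuine gap is the one you name but do not close. Your conclusion ``$Q_{\omega}\in H^{k}$ for every $k$ by induction'' is asserted, while the mechanism offered --- ``interleave integer-order and fractional derivatives and iterate carefully'' --- does not overcome the obstruction for non-integer $p\in(1,5)$: the map $z\mapsto|z|^{p-1}z$ has only finitely many derivatives at $z=0$, and since $Q_{\omega}\to 0$ at infinity the solution passes through the singular point of the nonlinearity on an unbounded region. The chain rule \eqref{eq-chain} is confined to $0<s<1$, and no combination of it with integer differentiations produces derivatives of $|u|^{p-1}u$ beyond order roughly $p$: already for $p=3/2$, two derivatives generate terms of the schematic form $|u|^{p-2}(\partial u)^{2}$, which are uncontrolled where $u$ is small. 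So the bootstrap as sketched stalls at finite smoothness, and an additional ingredient is required to reach the full statement --- for instance non-vanishing of $Q_{\omega}$ (making the nonlinearity smooth on the range of the solution, with local elliptic regularity then giving $C^{\infty}$), or the quantitative decay-plus-regularity arguments carried out in \cite{Esfahani-Pastor2}. Everything else in the sketch (the integrability climb, $H^{k}\hookrightarrow C^{m}_{0}$, boundedness and vanishing at infinity of derivatives) is fine at the level of detail offered, but as written the proposal proves markedly less than the theorem claims.
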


\begin{theorem}[Theorem 5.9 of \cite{Esfahani-Pastor2}]
\label{regular-2}
Let $\omega > 0$ and $1 < p < 5$. 
There exists $\sigma_{0} > 0$ such that 
for any $0 \leq \sigma < \sigma_{0}$ and $0 \leq s < 3/2$, 
we have 
\[
(e^{\sigma |x|} + |y|^{s}) Q_{\omega}(x, y) \in L^{1} \cap L^{\infty}
(\mathbb{R}^{2}).  
\]
\end{theorem}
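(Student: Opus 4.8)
The statement records two qualitatively different rates: exponential decay in the local variable $x$ and only algebraic decay in the nonlocal variable $y$. This dichotomy mirrors the anisotropy of the symbol $\xi^{2}+|\eta|+\omega$ of $L:=-\partial_{xx}+|D_{y}|+\omega$, which is real-analytic in $\xi$ but has a corner at $\eta=0$ in the $\eta$ variable. By Theorem \ref{regular-1} we already know $Q_{\omega}\in C^{\infty}\cap L^{\infty}$ with all derivatives tending to $0$ at infinity; replacing the minimizer by $|Q_{\omega}|$ if needed and using the strong positivity of $L^{-1}$, we may also assume $Q_{\omega}>0$. The plan is to rewrite \eqref{sp} as the convolution identity $Q_{\omega}=K*N$, with $N:=|Q_{\omega}|^{p-1}Q_{\omega}$ and $K$ the kernel of $L^{-1}$, and to read off each decay rate from the corresponding feature of $K$.

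First I would record the kernel. From the subordination formula $L^{-1}=\int_{0}^{\infty}e^{-t\omega}\,e^{t\partial_{xx}}\,e^{-t|D_{y}|}\,dt$ and the positivity of both the heat semigroup $e^{t\partial_{xx}}$ and the Poisson semigroup $e^{-t|D_{y}|}$ one obtains $K>0$; carrying out the $\xi$-integration explicitly yields
\[
K(x,y)=c\int_{\mathbb{R}}\frac{e^{-\sqrt{|\eta|+\omega}\,|x|}}{\sqrt{|\eta|+\omega}}\,e^{iy\eta}\,d\eta .
\]
For the exponential decay in $x$ I would use a comparison argument. Since $Q_{\omega}^{p-1}\to0$ at infinity, outside a large ball $Q_{\omega}$ satisfies $(-\partial_{xx}+|D_{y}|+\omega-Q_{\omega}^{p-1})Q_{\omega}=0$ with $\omega-Q_{\omega}^{p-1}\ge\omega/2$, and for $0<\sigma^{2}\le\omega/2$ the function $e^{-\sigma|x|}$ is a supersolution of $-\partial_{xx}+|D_{y}|+\omega/2$, because $|D_{y}|e^{-\sigma|x|}=0$ and $-\partial_{xx}e^{-\sigma|x|}=-\sigma^{2}e^{-\sigma|x|}$ away from $x=0$. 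The positivity of the resolvent then gives $0<Q_{\omega}(x,y)\lesssim e^{-\sigma|x|}$, and since $N$ decays faster this bootstraps through $Q_{\omega}=K*N$ to $e^{\sigma'|x|}Q_{\omega}\in L^{\infty}\cap L^{1}$ for every $\sigma'<\sqrt{\omega}$.

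The algebraic decay in $y$ is where the work lies, and it must be extracted from $K$ rather than from a maximum principle. The only obstruction to rapid $y$-decay of $K$ is the corner of the symbol at $\eta=0$; isolating it and integrating by parts (equivalently, a stationary-phase analysis of the displayed oscillatory integral, together with the smoothness and decay of $\widehat{N}$) produces a pointwise bound of the form $|Q_{\omega}(x,y)|\lesssim e^{-\sigma|x|}(1+|y|)^{-\beta}$, the rate $\beta$ being governed by the order of that singularity and inherited by $Q_{\omega}=K*N$ because $N$ is better localized. Inserting this bound into the weighted norms gives $|y|^{s}Q_{\omega}\in L^{\infty}$ and $|y|^{s}Q_{\omega}\in L^{1}(\mathbb{R}^{2})$ throughout the asserted range $0\le s<3/2$; combined with the exponential $x$-bound this proves $(e^{\sigma|x|}+|y|^{s})Q_{\omega}\in L^{1}\cap L^{\infty}$.

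The main obstacle is precisely this last step. Because $|D_{y}|$ is nonlocal of order one, there is no convenient comparison function with an algebraic tail, so the polynomial rate in $y$—and in particular the sharp threshold $s<3/2$—has to be read off from the singularity structure of the resolvent symbol and then propagated through a convolution whose kernel itself decays only algebraically in $y$. Controlling this bootstrap so that the convolution does not degrade the exponent, and checking that the corner at $\eta=0$ together with the localization of $N$ yields exactly the decay dictating $s<3/2$, is the delicate heart of the argument.
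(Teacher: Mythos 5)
You should first note that the paper contains \emph{no proof} of this statement: it is imported verbatim as Theorem 5.9 of \cite{Esfahani-Pastor2}, so there is no internal argument to compare against; the fair comparison is with the strategy of that reference, and your sketch does follow the same standard route (Green's function representation $Q_{\omega}=K\ast(|Q_{\omega}|^{p-1}Q_{\omega})$, positivity of $K$ by subordination, comparison/supersolution argument for the exponential rate in the local variable $x$, and decay of $K$ in $y$ driven by the corner of the symbol at $\eta=0$). Your kernel formula is correct, and the comparison argument in $x$ is sound in principle --- it is a good observation that the nonlocality acts only in $y$ while the comparison region omits only a slab in $x$, so the maximum principle is usable --- though as set up it yields only $\sigma\le\sqrt{\omega/2}$ rather than every $\sigma'<\sqrt{\omega}$, and $e^{\sigma|x|}Q_{\omega}\in L^{1}(\mathbb{R}^{2})$ cannot be concluded at that stage since it already requires integrable decay in $y$.

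The genuine gap is in the $y$-direction, where the entire quantitative content of the theorem is delegated to an unexecuted ``integration by parts / stationary phase'' step with an unspecified rate $\beta$; when one executes it, the numbers do not support your final step. The jump of $\partial_{\eta}(\xi^{2}+|\eta|+\omega)^{-1}$ across $\eta=0$ gives tails of size exactly $|y|^{-2}$: for instance $\int_{\mathbb{R}}K(x,y)\,dx=\mathcal{F}^{-1}_{\eta}\bigl[(|\eta|+\omega)^{-1}\bigr](y)\sim c\,\omega^{-2}|y|^{-2}$, and since $\int|Q_{\omega}|^{p-1}Q_{\omega}\neq 0$ for a positive ground state (no cancellation is available), $Q_{\omega}$ inherits tails of size \emph{exactly} $|y|^{-2}$, not faster. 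A pointwise bound $Q_{\omega}\lesssim e^{-\sigma|x|}(1+|y|)^{-2}$ does give $|y|^{s}Q_{\omega}\in L^{\infty}$ for all $s\le 2$, but ``inserting this bound into the weighted norms'' gives $|y|^{s}Q_{\omega}\in L^{1}(\mathbb{R}^{2})$ only for $s<1$; to reach $s<3/2$ in $L^{1}$ by your route one would need decay $o(|y|^{-5/2})$, which the kernel does not provide and which is false for positive solutions. The exponent $3/2$ is precisely the weighted-$L^{2}$ threshold for $|y|^{-2}$ tails ($\int|y|^{2s-4}\,dy<\infty$ iff $s<3/2$), which is the scale on which such decay estimates are naturally proved; so your concluding step fails on the range $1\le s<3/2$ and must be replaced (weighted $L^{2}$ bookkeeping, or a restricted range for the $L^{1}$ assertion). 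A smaller technical point you should also address: the symbol of $K$ is not integrable, so $K$ is singular along the line $\{x=0\}$, and the convolution bootstrap $Q_{\omega}=K\ast N$ requires checking $K\in L^{1}_{\mathrm{loc}}$ and splitting near that singular set before any pointwise tail estimate can be propagated.
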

It follows from Theorems \ref{regular-1} and \ref{regular-2} that 
$Q_{\omega}^{p-1} x \partial_{x} Q_{\omega}, 
Q_{\omega}^{p-1} y \partial_{y} Q_{\omega} \in L^{2}(\mathbb{R}^{2})$ 
for $p > 7/3$. 
Thus, if we put 
$P(x, y) : = - Q_{1} + p Q_{1}^{p-1} R_{1}$,  
we see that $P \in L^{2}(\mathbb{R}^{2})$ 
To prove Theorem \ref{regularity}, 
we shall also apply the following result of Lizorkin~\cite{Li}
\begin{proposition}[Lizorkin~\cite{Li}] \label{HM}
Let $\Phi: \mathbb{R}^{d} \to \mathbb{R}$ be $C^{d}$ 
for $|\xi_{i}| > 0, j = 1, 2, \ldots, d$. 
Assume that there exists $M >0$ such that 
\begin{equation} \label{ap}
\biggl|\xi_{1}^{k_{1}} \cdots \xi_{d}^{k_{d}}
\frac{\partial^{k} \Phi}{\partial \xi_{1}^{k_{1}} \cdots \partial \xi_{d}^{k_{d}}} \biggl|
\leq M
\end{equation}
with $k_{i} = 0$ or $1$ ($i = 1, 2, \ldots, d$), 
$k = k_{1} + k_{2} \cdots + k_{d} = 0, 1, 2, \ldots, d$. 
Then, we have $\Phi \in M_{q}(\mathbb{R}^{d}) \; (1 < q < \infty)$. 
Namely, there exists $C>0$ such that 
\[
\|\mathcal{F}^{-1}\left[\Phi \mathcal{F}[f] \right]\|_{L^{q}} 
\leq C \|f\|_{L^{q}}
\]
for $f \in \mathcal{S}(\mathbb{R}^{d})$. 
\end{proposition}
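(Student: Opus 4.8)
The statement is the Marcinkiewicz--Lizorkin multiplier theorem, and the plan is to read the pointwise bounds \eqref{ap} as a Marcinkiewicz-type condition on dyadic rectangles and then run the Littlewood--Paley method. First I would set up the product dyadic decomposition: for each sign vector $\epsilon \in \{\pm 1\}^{d}$ and each $j = (j_{1}, \ldots, j_{d}) \in \mathbb{Z}^{d}$, put $R_{j, \epsilon} = \prod_{i} \{ \xi : \epsilon_{i}\xi_{i} \in [2^{j_{i}}, 2^{j_{i}+1}) \}$, so that these $2^{d}$ families tile $\mathbb{R}^{d}$ minus the coordinate hyperplanes. The first observation is that \eqref{ap} self-improves to an integrated variation bound: integrating the full mixed partial over any such rectangle and using $\int_{2^{j}}^{2^{j+1}} d\xi/\xi = \log 2$ in each coordinate gives
\[
\int_{R_{j, \epsilon}} \left| \frac{\partial^{d} \Phi}{\partial \xi_{1} \cdots \partial \xi_{d}} \right| d\xi \leq M (\log 2)^{d},
\]
and likewise every lower-order mixed partial, integrated over the corresponding lower-dimensional dyadic face, is bounded by a constant multiple of $M$. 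This is exactly the classical hypothesis of the Marcinkiewicz theorem.

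Next I would recall the Littlewood--Paley machinery. For the operators $\Delta^{(i)}_{k}$ that project (on the Fourier side) onto the $k$-th dyadic block in the $\xi_{i}$ variable, the one-parameter inequality $\|g\|_{L^{q}} \approx \|(\sum_{k} |\Delta^{(i)}_{k} g|^{2})^{1/2}\|_{L^{q}}$ holds for $1 < q < \infty$; it follows from the $L^{q}$-boundedness of the Hilbert transform together with Khintchine's inequality and a Rademacher-averaging argument. Iterating this over the $d$ coordinates, each step carried out in vector-valued (Fubini) form, yields the genuinely multi-parameter square function equivalence
\[
\|f\|_{L^{q}(\mathbb{R}^{d})} \approx \left\| \Big( \sum_{j \in \mathbb{Z}^{d}} |\Delta_{j} f|^{2} \Big)^{1/2} \right\|_{L^{q}(\mathbb{R}^{d})}, \qquad \Delta_{j} := \Delta^{(1)}_{j_{1}} \cdots \Delta^{(d)}_{j_{d}},
\]
valid for $1 < q < \infty$.

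Then comes the per-block estimate. Writing $T_{\Phi} f = \mathcal{F}^{-1}[\Phi \, \mathcal{F}[f]]$, the operator $\Delta_{j} T_{\Phi}$ has multiplier $\Phi \, \chi_{R_{j, \epsilon}}$. On each rectangle I would expand $\Phi$ by the fundamental theorem of calculus applied in each coordinate toward the corner:
\[
\Phi(\xi)\,\chi_{R_{j,\epsilon}}(\xi) = (\text{boundary terms}) + \int_{R_{j, \epsilon}} \frac{\partial^{d} \Phi}{\partial \eta_{1} \cdots \partial \eta_{d}}(\eta) \, \mathbf{1}_{\{\eta \preceq \xi\}} \, d\eta,
\]
where $\mathbf{1}_{\{\eta \preceq \xi\}}$ is the indicator of the sub-rectangle of $R_{j,\epsilon}$ with corner $\eta$. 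Each such indicator is a tensor product of one-dimensional interval restrictions, whose multiplier operators are modulated Hilbert transforms and hence uniformly $L^{q}$-bounded; combined with the mass bound $\int_{R_{j,\epsilon}} |\partial^{d}\Phi| \le M(\log 2)^{d}$ from the first step, this represents $\Delta_{j} T_{\Phi}$ as an average of uniformly bounded operators with total weight $\lesssim M$. Feeding this through the square function equivalence applied to $T_{\Phi} f$ and then to $f$ gives
\[
\|T_{\Phi} f\|_{L^{q}} \approx \left\| \Big( \sum_{j} |\Delta_{j} T_{\Phi} f|^{2} \Big)^{1/2} \right\|_{L^{q}} \lesssim M \left\| \Big( \sum_{j} |\Delta_{j} f|^{2} \Big)^{1/2} \right\|_{L^{q}} \approx M \|f\|_{L^{q}},
\]
which is the assertion, first for $f \in \mathcal{S}$ and then by density.

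The main obstacle is to make the per-block estimate work at the level of the $\ell^{2}$-square function rather than operator-by-operator: one needs the family $\{ \Delta_{j} T_{\Phi} \}$ to be bounded with the vector-valued (Rademacher/$R$-boundedness) control that legitimizes summing in $j$, which is the substantive content beyond the scalar $L^{q}$-boundedness of a single interval restriction. Establishing the genuinely multi-parameter Littlewood--Paley equivalence by iterating the one-dimensional theory through its vector-valued extension is the other delicate ingredient. By contrast, bookkeeping of the $2^{d}$ sign sectors and of the lower-order boundary terms produced by the iterated fundamental theorem of calculus is routine once this core vector-valued estimate is secured.
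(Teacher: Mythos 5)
The paper does not prove this proposition at all: it is imported verbatim from Lizorkin's paper \cite{Li} (hence the attribution in the proposition's name), and is used in Appendix A only as a black box to deduce $R_{1},\partial_{xx}R_{1},|D_{y}|R_{1}\in L^{2}$ from \eqref{regular-1-1}. So there is no internal proof to compare against; your proposal should be judged on its own terms, and on those terms it is essentially sound. What you have written is the classical Littlewood--Paley proof of the Marcinkiewicz multiplier theorem (Stein, \emph{Singular Integrals and Differentiability Properties of Functions}, Ch.~IV, \S 6), of which the stated result is the special case of Lizorkin's theorem that the paper needs: the self-improvement of \eqref{ap} to the dyadic variation bound $\int_{R_{j,\epsilon}}|\partial^{d}\Phi|\,d\xi\leq M(\log 2)^{d}$ is correct, as is the iterated fundamental-theorem-of-calculus representation of $\Phi\chi_{R_{j,\epsilon}}$ and the reduction of each block to an $M$-weighted average of tensor products of interval restrictions.

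The one substantive point is the step you yourself flag as the ``main obstacle'': summing over $j$ at the level of the square function requires the uniform $L^{q}(\ell^{2})$ bound $\bigl\|\bigl(\sum_{j}|S_{\rho_{j}}g_{j}|^{2}\bigr)^{1/2}\bigr\|_{L^{q}}\lesssim\bigl\|\bigl(\sum_{j}|g_{j}|^{2}\bigr)^{1/2}\bigr\|_{L^{q}}$ for an \emph{arbitrary} family of rectangles $\rho_{j}$ (one per index), applied after a Cauchy--Schwarz in the averaging measure $|\partial^{d}\Phi(\eta)|\,d\eta$ on each block. You correctly identify this as the substantive content beyond scalar boundedness of a single interval restriction, but you leave it as an acknowledged gap rather than closing it; in fact it is a classical lemma (Stein, Ch.~IV, \S 5), proved via the vector-valued extension of the Hilbert transform to $L^{q}(\ell^{2})$ together with invariance under modulations, so the gap is one of citation rather than of substance. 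The same remark applies to the multi-parameter Littlewood--Paley equivalence for sharp dyadic blocks, which for $1<q<\infty$ follows by iterating the one-dimensional theorem in vector-valued form exactly as you describe. With those two classical lemmas quoted, your argument is complete; note only that Lizorkin's actual theorem in \cite{Li} is somewhat more general than the Marcinkiewicz-type statement reproduced here, so your proof establishes the proposition as stated but not the full strength of the cited result.
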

We are now in position to prove Theorem \ref{regularity}. 
\begin{proof}[Proof of Theorem \ref{regularity}]
Let 
\[
\Phi_{1}(\xi_{1}, \xi_{2}) = \frac{1}{|\xi_{1}|^{2} + |\xi_{2}| + \omega}, \quad 
\Phi_{2}(\xi_{1}, \xi_{2}) = \frac{- \xi_{1}^{2}}{|\xi_{1}|^{2} + |\xi_{2}| + \omega}, \]
\[ \Phi_{3}(\xi_{1}, \xi_{2}) = \frac{|\xi_{2}|}{|\xi_{1}|^{2} + |\xi_{2}| + \omega}. 
\]
We can easily check that $\Phi_{i}\; (i = 1, 2, 3)$ satisfies 
the condition \eqref{ap}.

It follows from \eqref{sp} that 
\begin{equation} \label{regular-1-1}
\begin{split}
& R_{1} = \mathcal{F}^{-1}\left[\Phi_{1}(\xi_{1}, \xi_{2}) 
\mathcal{F}(P)\right], \quad 
\partial_{xx} R_{1} = \mathcal{F}^{-1}\left[\Phi_{2}(\xi_{1}, \xi_{2}) 
\mathcal{F}(P)\right], \quad \\
& |D_{y}|R_{1} = \mathcal{F}^{-1}\left[\Phi_{3}(\xi_{1}, \xi_{2}) 
\mathcal{F}(P)\right]. 
\end{split}
\end{equation}
Since $P \in L^{2}(\mathbb{R}^{2})$, 
we have 
$R_{1}, \partial_{xx}R_{1}, |D_{y}|Q_{1} \in L^{2}
(\mathbb{R}^{2})$. 
This yields that $R_{1} \in X$, which completes the proof. 
\end{proof}

\section{Tools for compactness}\label{compactness}
\begin{lemma}[Brezis and Lieb \cite{Brezis-Lieb}]\label{16/03/25/01:03}
Assume that $1 < q < \infty$. 
Let $\{u_{n}\}$ be a bounded sequence in $L^{q}(\mathbb{R}^{d})$ such that 
\begin{equation}\label{16/03/25/11:11}
\lim_{n\to \infty}u_{n}(x)=u_{\infty}(x) 
\qquad 
\mbox{almost all $x\in \mathbb{R}^{d}$}
\end{equation}
for some function $u_{\infty} \in L^{q}(\mathbb{R}^{d})$. 
Then, we have 
\begin{equation}
\label{16/03/25/11:12}
\lim_{n\to \infty}
\int_{\mathbb{R}^{d}} 
\big| |u_{n}|^{q}-|u_{n}-u_{\infty}|^{q}-|u_{\infty}|^{q} \big| \,dx =0. 
\end{equation}
\end{lemma}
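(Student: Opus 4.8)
The plan is to reduce the claim to a pointwise estimate combined with the dominated convergence theorem, following the classical argument of Brezis and Lieb. First I would set $f_{n} := u_{n} - u_{\infty}$, so that $f_{n}(x) \to 0$ for almost every $x$ and, since $\{u_{n}\}$ is bounded in $L^{q}$ and $u_{\infty} \in L^{q}$, the sequence $\{f_{n}\}$ is bounded in $L^{q}(\mathbb{R}^{d})$; write $\sup_{n} \|f_{n}\|_{L^{q}}^{q} =: M < \infty$. Using $u_{n} = f_{n} + u_{\infty}$, the integrand becomes
\[
W_{n}(x) := \Big| |f_{n} + u_{\infty}|^{q} - |f_{n}|^{q} - |u_{\infty}|^{q}\Big|(x),
\]
and the goal is to show $\int_{\mathbb{R}^{d}} W_{n}\, dx \to 0$.

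The key step is the elementary inequality: for every $\varepsilon > 0$ there is a constant $C_{\varepsilon} > 0$ such that
\[
\big| |a + b|^{q} - |a|^{q}\big| \leq \varepsilon |a|^{q} + C_{\varepsilon} |b|^{q} \qquad (a, b \in \mathbb{C}).
\]
This follows from the mean-value bound $\big||a+b|^{q}-|a|^{q}\big| \lesssim |b|(|a|+|b|)^{q-1}$ together with Young's inequality. Applying it with $a = f_{n}$ and $b = u_{\infty}$, and then adding $|u_{\infty}|^{q}$, gives the pointwise domination
\[
W_{n} \leq \varepsilon |f_{n}|^{q} + (C_{\varepsilon} + 1)|u_{\infty}|^{q}.
\]

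With this in hand I would introduce the truncated function $G_{n,\varepsilon} := \big(W_{n} - \varepsilon |f_{n}|^{q}\big)^{+}$. By the displayed bound $0 \leq G_{n,\varepsilon} \leq (C_{\varepsilon}+1)|u_{\infty}|^{q}$, which lies in $L^{1}(\mathbb{R}^{d})$ and is independent of $n$; moreover $W_{n} \to 0$ a.e. (because $f_{n} \to 0$ a.e.), hence $G_{n,\varepsilon} \to 0$ a.e. The dominated convergence theorem then yields $\int G_{n,\varepsilon}\, dx \to 0$. Since $W_{n} \leq G_{n,\varepsilon} + \varepsilon |f_{n}|^{q}$ pointwise, we obtain $\int W_{n}\, dx \leq \int G_{n,\varepsilon}\, dx + \varepsilon M$, so that $\limsup_{n\to\infty} \int W_{n}\, dx \leq \varepsilon M$. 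Letting $\varepsilon \to 0$ completes the proof.

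I expect the only genuine subtlety to be the $\varepsilon$-splitting itself: one cannot dominate $W_{n}$ by a single $n$-independent $L^{1}$ function directly, so the role of the inequality is precisely to peel off the non-uniformly-integrable part $\varepsilon|f_{n}|^{q}$ (whose integral is controlled by the uniform bound $M$) and to leave a remainder $G_{n,\varepsilon}$ that is dominated and converges to zero. The final passage $\varepsilon \to 0$ is what converts this into the exact vanishing statement.
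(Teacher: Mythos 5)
Your proof is correct. The paper itself gives no proof of this lemma --- it is stated in Appendix~B as a known result and attributed directly to Brezis and Lieb --- and your argument (the $\varepsilon$-splitting inequality $\bigl||a+b|^{q}-|a|^{q}\bigr|\leq \varepsilon|a|^{q}+C_{\varepsilon}|b|^{q}$, the truncation $G_{n,\varepsilon}=(W_{n}-\varepsilon|f_{n}|^{q})^{+}$ dominated by $(C_{\varepsilon}+1)|u_{\infty}|^{q}$, dominated convergence, and then $\varepsilon\to 0$) is precisely the classical argument from that reference, carried out correctly.
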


\begin{thank}
The authors are very grateful to Patrick Gerard for introducing
them to the problem and his interesting comments and suggestions. We thank also Takafumi Akahori for very helpful discussions leading to the improvement of the text.
This work was done while H.K. was visiting at University of Victoria. 
H.K. thanks all members of the Department of 
Mathematics and Statistics for their warm hospitality. The work of H.K. was supported by JSPS KAKENHI Grant Number JP17K14223. S.I was supported by NSERC grant (371637-2014). Y.B was supported by PIMS grant and NSERC grant (371637-2014).

\end{thank}

\bibliographystyle{plain}

\vspace{24pt}

\end{document}